\documentclass{article}
\usepackage{amsmath, amssymb, amsfonts, latexsym,mathrsfs}
\usepackage{color,graphicx,hyperref,geometry}
\usepackage{inputenc}
\usepackage{footnote}
\usepackage{float}
\usepackage{enumerate}
\usepackage{ulem,booktabs,multirow}
\usepackage{bm,physics,units}
\usepackage{tikz}
\usepackage{verbatim}

\renewcommand{\d}{\mathop{}\!\mathrm{d}} 
\newcommand{\D}{\mathrm{D}} 

\newcommand{\pt}{\partial}

\newcommand{\ve}{\varepsilon}

\newcommand{\<}{\left<}
\renewcommand{\>}{\right>}

\newcommand{\mcalP}{\mathcal{P}}

\newcommand{\BbbR}{\mathbb{R}}

\newcommand{\dist}{\operatorname{dist}}
\newcommand{\argmin}{\mathop{\arg\min}}

\renewcommand{\emph}{\textit}

\makeatletter 
\@ifundefined{tr}{
    \newcommand{\tr}{\operatorname{tr}}
}
\@ifundefined{divergence}{
    \newcommand{\divergence}{\operatorname{\rm div}}
}{
    \renewcommand{\divergence}{\operatorname{\rm div}}
}
\makeatother

\usepackage{amsthm}
\newtheorem{theorem}{Theorem}
\newtheorem{lemma}[theorem]{Lemma}
\newtheorem{proposition}[theorem]{Proposition}
\newtheorem{corollary}[theorem]{Corollary}
\theoremstyle{definition}
\newtheorem{remark}{Remark}
\newtheorem*{remark*}{Remark}




\allowdisplaybreaks

\title{A diffuse-interface Landau-de Gennes model for free-boundary problems in the theory of nematic liquid crystals
\footnote{LZ is supported by the National Natural Science Foundation of China (No. 12225102, T2321001, and 12288101). AM is supported by the University of Strathclyde New Professors Fund, the Humboldt Foundation and a Leverhulme Research Project Grant RPG-2021-401. This work is partially supported by the Royal Society Newton Advanced Fellowship NAF/R1/180178 awarded to AM and LZ. }}

\date{\today}
\author{
Dawei Wu\footnote{School of Mathematical Sciences, Peking University, Beijing 100871, China ({2201110052@pku.edu.cn})}
\and
Baoming Shi\footnote{School of Mathematical Sciences, Peking University, Beijing 100871, China ({ming123@stu.pku.edu.cn})}
\and
Yucen Han\footnote{Department of Mathematics and Statistics, University of Strathclyde, Glasgow G1 1XQ, United Kingdom ({yucen.han@strath.ac.uk})}
\and
Pingwen Zhang\footnote{School of Mathematics and Statistics, Wuhan University, Wuhan 430072, China; School of Mathematical Sciences, Peking University, Beijing 100871, China ({pzhang@pku.edu.cn})}
\and
Apala Majumdar\footnote{Department of Mathematics and Statistics, University of Strathclyde, Glasgow G1 1XQ, United Kingdom ({apala.majumdar@strath.ac.uk})}
\and
Lei Zhang\footnote{Corresponding author. Beijing International Center for Mathematical Research, Center for Quantitative Biology, Center for Machine Learning Research, Peking University, Beijing 100871, China ({zhangl@math.pku.edu.cn})}
}

\begin{document}
\maketitle

\begin{abstract}
We introduce a diffuse-interface Landau-de Gennes free energy for nematic liquid crystals (NLC) systems, with free boundaries, in three dimensions submerged in isotropic liquid, and a phase field is introduced to model the deformable interface. 
The energy consists of the original Landau-de Gennes free energy, three penalty terms and a volume constraint.
We prove the existence and regularity of minimizers for the diffuse-interface energy functional. We also prove a uniform maximum principle of the minimizer under appropriate assumptions, together with a uniqueness result for small domains.
Then, we establish a sharp-interface limit where minimizers of the diffuse-interface energy converge to a minimizer of a sharp-interface energy using methods from $\Gamma$-convergence.
Finally, we conduct numerical experiments with the diffuse-interface model and the findings are compared with existing works.
\end{abstract}

\paragraph*{Keywords.} nematic liquid crystals, phase separation, Landau-de Gennes, $\Gamma$-convergence

\paragraph*{AMS subject classifications.} 76A15, 49J27, 49K20, 35B50, 49J45, 35F30

\section{Introduction}

Liquid crystals (LC) are an intermediate state between solids and liquids, in which positional order is partially or completely lost, but molecular anisotropy is still present \cite{de_gennes_physics_1974}.
The simplest LC phase is the \textit{nematic} phase for which the constituent molecules have no positional order, but prefer to align along certain locally preferred directions known as \textit{directors}.
Nematic liquid crystals (NLCs) are endowed with direction-dependent physical, optical and rheological properties \cite{sonin_pierre-gilles_2018}.
NLCs exhibit intriguing morphology because of their molecular anisotropy, especially when a droplet of undetermined shape interacts with a different material, about which people have conducted many experiments.
A special droplet morphology of interest is the nematic \textit{tactoid}, i.e. spindle-shaped droplets filled with NLC whose directors align with the surface, which tends to emerge during the nematic-isotropic phase transition induced by temperature \cite{kim_morphogenesis_2013,nastishin_optical_2005,tortora_self-assembly_2010} or around rod-shaped bacteria as protective sheaths \cite{tarafder_phage_2020}. 
NLC droplets dispersed in polymer (known in the industry as polymer-dispersed liquid crystal) also show special opto-electric properties that inspire advances in the display industry \cite{doane_polymer_1988,kitson_controllable_2002,spencer_zenithal_2010,west_phase_1988,zografopoulos_beam-splitter_2014}.
The study of free-boundary NLC has also evoked insights into deformable anisotropic materials in other disciplines of science \cite{leoni_defect_2017,maroudas-sacks_topological_2021,sawa_shape_2013}.
These examples demonstrate the vast potential of free boundary problems for NLCs in confinement.

We need two essential ingredients to construct mathematical models for NLC problems with free boundaries: 
a degree of freedom to describe the free boundary and a NLC order parameter, to describe the nematic directors or the long-range orientational ordering in the NLC phase.
For modelling NLCs, there are competing molecular-based and macroscopic models e.g.,  the Onsager (molecular) model based on orientational distribution function, the continuum Oseen-Frank (OF) model for uniaxial NLC phases and the NLC order parameter is a unit-vector field that models the single distinguished material direction, and the Landau-de Gennes (LdG) model that describes the NLC phase in terms of the $\vb Q$-tensor order parameter, with five degrees of freedom that can describe uniaxial and biaxial NLC phases (NLC phase with a primary and a secondary nematic director) \cite{han_microscopic_2015, wang_modelling_2021}. 
With regard to modelling free boundaries, we quote existing methods from the field of \textit{shape and topology optimization} \cite{delfour_shapes_2011}.
The popular methods include the surface mesh method \cite{brakke_surface_1992} that discretizes the free boundary with a finite-element mesh known as the interface segregating different phases, 
and the diffuse-interface method \cite{wang_phase_2010} that employs a smooth field (space-dependent function), known as the \textit{phase field}, to describe the phase separation and interfacial regions. 
In recent decades, there have been multiple diverse approaches to modelling free boundary-value problems for NLC systems \cite{adler_nonlinear_2023,debenedictis_shape_2016,forest_lcp_2012,geng_two-dimensional_2022,golovaty_ginzburg-landau-type_2019,kaznacheev_influence_2003,kim_morphogenesis_2013,li_kinetic_2011,lin_isotropic-nematic_2023,mata_ordering_2014,xing_morphology_2012,yue_diffuse-interface_2004}.
The common practice is to design energy functionals with respect to the shape and to the NLC order parameter, and find the stable configuration with energy minimization methods.
For instance, in \cite{adler_nonlinear_2023} the authors describe a deformable two-dimensional (2D) NLC confinement with a finite-element mesh, and their energy functional consists of the LdG free energy and a penalty function for Dirichlet boundary values; they numerically find that for sufficiently strong penalty factors, the optimal shape converges to a tactoid, consistent with experimental findings. 
The authors of \cite{debenedictis_shape_2016} study the same problem with the finite element method as well, but they choose the OF free energy instead and further penalize the energy functional with the perimeter of the free boundary; they numerically find tactoids as the optimal shape when the penalty factors are large enough. 
In \cite{yue_diffuse-interface_2004}, the authors use a phase field, $\phi$, to denote separation of a NLC and a Newtonian fluid with, $\phi=1$ for NLC and $\phi=-1$ for fluid respectively, and their energy density consists of a boundary anchoring term, a mixing energy density of Van der Waals-Cahn-Hilliard form \cite{cahn_free_1958} to penalize the sharp phase separation ($\phi\approx \pm 1$) and smoothness of $\phi$, and finally the weighted OF free energy density masked by $\frac{\phi+1}{2}$ so that it is only integrated over the NLC phase; the authors numerically minimize the energy and find multiple stable states, including one radially symmetric state and one slightly elliptical bipolar state reminiscent of the tactoid. 
In \cite{geng_two-dimensional_2022}, the authors apply the OF model to study the 2D NLC droplet in the sharp-interface case, and theoretically predict the spindle-shaped tactoid. \label{pg:add1} 
In general, these shape/topology optimization methods have been hugely successful for NLC free boundary-value problems and there is immense potential for further theoretical and numerical exploration. 


We propose a diffuse-interface model for NLC free boundary-value problems in 3D, for nematic regions submerged in an isotropic liquid, using the LdG framework to describe the NLC phase and a phase field to describe the deformable shapes respectively. Our approach is mainly inspired by \cite{dai_convergence_2018,garcke_numerical_2015,yue_diffuse-interface_2004}. The model is relatively new since we may be the first people to combine LdG theory with diffuse-interface models for NLC free boundary-value problems.
We choose the LdG model because it is the most comprehensive continuum model for NLCs to date, and is yet more computationally tractable (with fewer degrees of freedom) than the molecular-level Onsager model. 
The LdG $\vb Q$-tensor order parameter contains information about the macroscopic quantities of interest --- the nematic director(s) and the degree of orientational ordering about the director(s).
The LdG model is a variational model with a LdG free energy, defined in terms of the LdG $\vb Q$-tensor, and the LdG energy minimizers model the physically observable or experimentally relevant NLC configurations. 
The LdG theory can describe uniaxial and biaxial NLC phases, spatially varying degrees of orientational order, \textit{defects} of different dimensionalities (point, line or surface discontinuities of the director field \cite{kleman_defects_1989,virga_variational_1994}) including non-orientable defects of fractional degrees ($+1/2$ and $-1/2$ defects in 2 dimensions, arising from the intrinsic head-to-tail symmetry of the NLC phase), while the OF theory is limited to uniaxial NLC phases with constant degrees of orientational ordering and cannot describe non-orientable point defects. 
Therefore, the LdG theory has been exceptionally successful for describing structural transitions in confined NLC systems.
The LdG model has been widely studied in the mathematical literature, and there are multiple theoretical results on the qualitative properties of LdG energy minimizers and LdG defect structures \cite{canevari_order_2017,henao_symmetry_2012,majumdar_equilibrium_2010}. 
LdG solution landscapes have also been extensively studied on fixed domains in two and three-dimensions and such studies shed powerful insight into the correlations between shape, geometry, boundary effects and stable equilibria, see e.g. \cite{han_multistability_2023,han_reduced_2020,han_solution_2021,shi_hierarchies_2023,shi_nematic_2022,yin_construction_2020}. 
Therefore, we believe that the LdG model has huge potential for NLC free boundary-value problems.
We choose the diffuse-interface model to describe the deformable shape because the phase field can be defined on a fixed grid \cite{singer-loginova_phase_2008,wang_phase_2010} while a surface mesh requires a full finite-element mesh \cite{brakke_surface_1992}. The degrees of freedom are therefore considerably lower for the phase field, since it only involves a scalar function, but the finite-element mesh requires the spatial coordinates of all nodes and the adjacency relations between them. 
Moreover, the boundary of the deformable shape is identified by variations of the phase field, so topological changes can be implemented, e.g. one region splitting into two or two regions merging into one \cite{chen_efficient_2022,leoni_defect_2017,yue_diffuse-interface_2004}. On the contrary, the surface mesh cannot describe topological changes because the node adjacency relations are fixed.

Our work is presented as follows. 
In Section \ref{sec-dildg}, we define a diffuse-interface energy functional $E_\ve$ composed of the classic LdG free energy in three dimensions, a mixing energy in terms of the phase field $\phi$, an anchoring energy to account for anchoring conditions on the nematic-isotropic interface and a void energy, along with a volume constraint on the phase field $\phi$. The small parameter $\ve>0$ models the thickness of the diffuse interface.
In Section \ref{sec-min}, we establish the solvability of this model, i.e. existence of minimizers of $E_\ve$, with classical methods of calculus of variation \cite{evans_partial_2010}.
In Section \ref{sec-reg}, we discuss the problem of minimizing $E_\ve$ with a fixed phase field. We obtain uniform bounds for the energy minimizers, independent of $\ve$ and address technical challenges stemming from the penalty terms, and also demonstrate uniqueness of energy minimizers (with fixed $\phi$) for sufficiently small domains. The uniqueness results follow from convexity arguments in \cite{lamy_bifurcation_2014}. Our discoveries agree with well-known results for the classical LdG energy on a fixed region.
In Section \ref{sec-sil}, we present our main contribution --- the limit of $E_\ve$ as $\ve\to 0,$ known as the \textit{sharp-interface limit}, with $\Gamma$-convergence \cite{de_giorgi_convergence_1978} as the primary tool.
We prove that under appropriate assumptions, the convergence of minimizers of $E_\ve$ to the minimizer of a sharp-interface functional, $E_0$, defined over deformable regions, that also models the separation of nematic and isotropic phases. The technical challenges originate from the limits of the anchoring energy, and that the mixing and the anchoring energy are absorbed as a single boundary integral in the sharp-interface energy.
In Section \ref{sec-num}, we conduct numerical experiments on the minimization of a reduced version of $E_\ve$ on a two-dimensional domain. We adjust hyperparameters to observe structural changes in the energy minimizer as a function of the penalty parameters, and compare them with existing results on the literature, particularly shape transitions from two-dimensional NLC-filled discs to NLC tactoids as a function of the hyperparameters. The numerical work illustrates the computational efficiency of our model and leads to several generalizations. We conclude with some perspectives in Section~\ref{sec:conclusion}.

\section{Diffuse-interface Landau-de Gennes model} \label{sec-dildg}

Let $\Omega\subset \BbbR^3$ be a three-dimensional (3D) region with Lipschitz boundary, where a blob of nematic liquid crystals is surrounded by an isotropic liquid. The nematic and isotropic phases are separated by the nematic-isotropic (N-I) interface.
We propose a \textit{diffuse-interface LdG energy}
\begin{equation} \label{ldg-phf}
\begin{aligned}
     E_\ve = E^{\rm LdG} + \omega_p E^{\rm mix}_\ve + \omega_a E^{\rm anch}_\ve + \omega_v E^{\rm void},
\end{aligned}
\end{equation}
where $\omega_p,\omega_v,\omega_a$ are positive weights of the competing energy terms, and $\ve>0$ is a positive parameter in the second and third terms respectively.
The physical dimensions of $\omega_p,\omega_v,\omega_a$, which will be given later, ensure that all terms on the RHS of \eqref{ldg-phf} have the dimension $\unit{N\cdot m}$, i.e. the dimension of energy.
The diffuse-interface LdG energy has four contributions: (a) LdG free energy of the nematic phase, (b) mixing energy or interfacial energy associated with the N-I interface, (c) the anchoring energy on the N-I interface that promotes tangential anchoring of the nematic molecules on the N-I interface and (d) void energy (penalty) of the isotropic phase.

\paragraph{(a) LdG energy}
We work in the LdG framework and describe the nematic phase by the tensor variable $\vb Q=((Q^{ij}))_{3\times 3}$, which is a $3\times 3$ symmetric traceless matrix. The eigenvectors of the LdG $\vb Q$-tensor model the nematic directors and the corresponding eigenvalues contain information about the degree of orientational ordering about the directors \cite{de_gennes_physics_1974}. Nematics are broadly classified as follows: the isotropic phase is modelled by $\vb Q=\vb 0$ so that there is no orientational ordering; $\vb Q$ is \textit{uniaxial} if $\vb Q$ has two degenerate non-zero eigenvalues and there is a single distinguished nematic director corresponding to the eigenvector with the non-degenerate eigenvalue; $\vb Q$ models a \textit{biaxial} phase if there are three distinct eigenvalues, and hence, a primary and secondary nematic director.
A uniaxial $\vb Q$-tensor can be written as
\[\vb Q=s\qty(\vb{n}\otimes\vb{n}-\frac13\vb I),\]
with $s\in\BbbR$ and $\vb{n}\in\mathbb{S}^2$ is the nematic director or the eigenvector corresponding to the non-degenerate eigenvalue. 
The term $E^{\rm LdG}$ in \eqref{ldg-phf} is the classical LdG free energy \cite{de_gennes_physics_1974}
\begin{equation} \label{E-ldg}
    E^{\rm LdG}[\vb Q]=\int_\Omega (F_{el} + F_b) \d x,
\end{equation}
with the one-constant elastic energy density
\begin{equation} \label{ldg-ela}
F_{el} = \frac{L}{2} |\grad\vb{Q}|^2,
\end{equation}
where $L>0$ is the material-dependent elastic constant, $|\grad\vb{Q}|^2\triangleq \sum_{i,j,k} |Q^{ij}_{x_k}|^2$. The bulk energy density is given by
\begin{equation}\label{ldg-bulk}
F_b(\vb Q) = \frac{A}{2} \tr\vb Q^2 - \frac{B}{3} \tr\vb Q^3 + \frac{C}{4} \qty(\tr\vb Q^2 )^2, 
\end{equation}
where $A<0$ is a re-scaled temperature and $B,C>0$ are material-dependent constants. We work with $A<0$ so that the minimizers of $F_b$ are a continuum of uniaxial $\vb Q$-tensors $\vb{Q}^*=s_+\qty(\vb{n}\otimes\vb{n}-\frac13\vb{I})$, with arbitrary unit vector $\vb{n}\in\mathbb{S}^2$ and
\begin{equation} \label{s-plus}
    s_+=\frac{B+\sqrt{B^2-24AC}}{4C}.
\end{equation}
The dimensions of $A,B,C$ are $\unit{N\cdot m^{-2}}$, and that of $L$ is $\unit{N}$, so that $E^{\rm LdG}$ has the dimension $\unit{N\cdot m}$.
The one-constant elastic energy density \eqref{ldg-ela} is analytically attractive and physically relevant for a large class of liquid crystal materials, but more general forms exist \cite{han_microscopic_2015,mottram_introduction_2014}.

\paragraph{(b) Mixing energy}
The term $E^{\rm mix}_\ve$ in \eqref{ldg-phf} is the mixing energy. It is the well-known \textit{Van der Waals-Cahn-Hilliard energy functional} \cite{cahn_free_1958,van_der_waals_thermodynamic_1979} for the binary mixture of two phases, in this case the nematic and isotropic phases respectively.
\begin{equation} \label{E-per}
    E^{\rm mix}_\ve[\phi]=\int_\Omega \left[ \ve |\nabla\phi|^2 + \ve^{-1} \phi^2(1-\phi)^2 \right] \d x,
\end{equation}
where $\phi$ is a phase field.
$\ve$ has the length dimension $\unit{m}$, so the term $E^{\rm mix}_\ve$ has dimension $\unit{m^2}$. We give the weight $\omega_p$ the dimension $\unit{N\cdot m^{-1}}$, so that the term $\omega_p E^{\rm mix}$ in \eqref{ldg-phf} has the energy dimension $\unit{N\cdot m}$.
Informally speaking, the continuous double-well function $W(\phi)=\phi^2(1-\phi)^2$ drives the separation of $\Omega$ into subdomains: the nematic subdomain with $\phi \approx 1$, the isotropic subdomain with $\phi\approx 0$, while the gradient term $|\nabla\phi|^2$ smoothens out $\phi$ to form a \textit{diffuse interface} whose thickness is represented by $\ve>0$, the \textit{capillary width} (see Figure \ref{fig:phasefield}).
In the $\ve \to 0$ limit, the Van der Waals-Cahn-Hilliard energy approximates the perimeter of the interface in the sense of $\Gamma$-convergence (the sharp interface limit) \cite{modica_gradient_1987}, so that it models the interfacial tension as is common in numerous physical models \cite{julicher_shape_1996,lowengrub_phase-field_2009}. 

\begin{figure}
    \centering
    \begin{tikzpicture}
    \filldraw[draw=black,fill=red!50!lightgray] (-2,-2) rectangle(2,2);
    \fill[fill=red!50!lightgray!99!white] (0,0) circle[radius=1.25];
    \fill[fill=red!50!lightgray!98.2!white] (0,0) circle[radius=1.2];
    \fill[fill=red!50!lightgray!95.3!white] (0,0) circle[radius=1.15];
    \fill[fill=red!50!lightgray!88.1!white] (0,0) circle[radius=1.1];
    \fill[fill=red!50!lightgray!73.1!white] (0,0) circle[radius=1.05];
    \fill[fill=red!50!lightgray!50!white] (0,0) circle[radius=1];
    \fill[fill=red!50!lightgray!26.9!white] (0,0) circle[radius=.95];
    \fill[fill=red!50!lightgray!11.9!white] (0,0) circle[radius=.9];
    \fill[fill=red!50!lightgray!4.7!white] (0,0) circle[radius=.85];
    \fill[fill=red!50!lightgray!1.8!white] (0,0) circle[radius=.8];
    \fill[fill=white] (0,0) circle[radius=.75];
    \draw[dashed,thick] (0,0) circle[radius=1];
    \draw (0,.3) node[color=black]{$\phi= 1$};
    \draw (0,1.5) node{$\phi= 0$};
    \draw[thick] (0,-.9)--(1.1,-.9);
    \draw[thick] (0,-1.1)--(1.1,-1.1);
    \draw[->] (1,-1.3)--(1,-1.1);
    \draw[->] (1,-.7)--(1,-.9);
    \draw (1,-1) node[anchor=west]{$O(\ve)$};
    \draw (.8,.6) -- (1.2,1.2) node[anchor=south west]{$\pt D$};
\end{tikzpicture}
    \caption{Schematic figure of N-I mixing. As colour change from bright to dark (white to greyish-red in colour), phase changes from nematic ($\phi=1$) to isotropic ($\phi=0$) via diffuse N-I interface with thickness $O(\ve)$ surrounding the surface $\partial D$.} \label{fig:phasefield}
\end{figure}
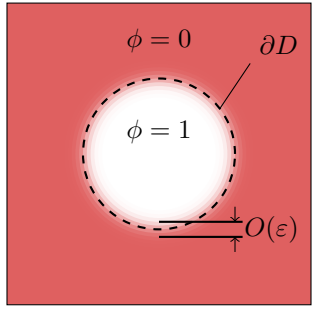

\paragraph{(c) Anchoring energy}
The term $E^{\rm anch}$ in \eqref{ldg-phf} enforces tangential anchoring \cite{de_gennes_physics_1974,virga_variational_1994} on the N-I interface and is defined by
\begin{equation} \label{E-anch}
    E^{\rm anch}_\ve[\vb Q,\phi]=\int_\Omega \ve \left| \left(\vb{Q}(x)+\frac{s_+}{3}\vb I\right) \nabla\phi \right|^2\d x,
\end{equation}
with $s_+$ defined inf \eqref{s-plus}.
$E^{\rm anch}_\ve$ also has dimension $\unit{m^2}$, and the weight $\omega_a$ has dimension $\unit{N\cdot m^{-1}}$, so that the term $\omega_a E^{\rm anch}$ appearing in \eqref{ldg-phf} has the energy dimension $\unit{N\cdot m}$.
$E^{\rm anch}_\ve$ is minimized when $\vb Q \nabla\phi=-\frac{s_+}{3}\nabla\phi$, i.e. when $\nabla\phi$ is an eigenvector of $\vb Q$ with negative eigenvalue $-\frac{s_+}{3}$. The vector $\nabla \phi$, approximates the normal vector to the diffuse interface and this constraint requires that the leading nematic director (eigenvector of $\vb Q$ with the largest positive eigenvalue) is orthogonal to $\nabla \phi$ or tangent to the N-I interface. 
To keep this energy bounded in the ($\ve \to 0$) sharp-interface limit, the factor $\ve$ is needed in \eqref{E-anch}. One can verify this fact by checking from Figure \ref{fig:phasefield} that $|\nabla\phi|=O(\ve^{-1})$ and that the size of its support is $O(\ve)$, making the integral in \eqref{E-anch} approximately $ \ve O(\ve^{-2})\cdot O(\ve)=O(1)$ as required, in the $\ve \to 0$ limit.
This weak anchoring energy has been applied elsewhere e.g., \cite{hu_disclination_2016} and \cite{shi_multistability_2024}.

\paragraph{(d) Void energy}
The term $E^{\rm void}$ in \eqref{ldg-phf} penalizes the isotropic phase ($\phi\approx 0$) surrounding the nematic phase and is given by
\begin{equation} \label{E-void}
    E^{\rm void}[\vb Q,\phi]= \int_\Omega \frac12(1-\phi)^2 |\vb Q|^2 \d x,
\end{equation}
where $|\vb Q|^2=\sum_{i,j} |Q^{ij}|^2$ is the Frobenius norm.
$E^{\rm void}$ has dimension $\unit{m^3}$, and the weight $\omega_v$ has dimension $\unit{N\cdot m^{-2}}$, so that the term $\omega_v E^{\rm void}$ in \eqref{ldg-phf} has the energy dimension $\unit{N\cdot m}$.
It is inspired by similar penalty terms in the studies of phase separation in fluids \cite{chen_efficient_2022,garcke_numerical_2015}.

We impose the volume constraint 
\[ \int_\Omega \phi\d x = V_0, \]
to conserve the mass of NLCs with $0<V_0 <|\Omega|$ ($|\Omega|$ is the volume of $\Omega$),
and impose homogeneous Dirichlet boundary conditions
\[ \vb{Q}|_{\pt\Omega}=\vb 0,\quad \phi|_{\pt\Omega}=0,\]
to model the isotropic phase surrounding the nematic phase.

The energy \eqref{ldg-phf} can be nondimensionalized. Let $\bar{x}=\lambda^{-1}x$, for all $x\in\Omega$, where $\lambda$ is a characteristic length scale associated with $\Omega$. After substituting $\bar x$ into the integrals and dividing the entire energy by $\lambda L$, we obtain
\begin{equation} \label{ldg-phf-nondim}
\begin{aligned}
    \bar{E}_{\bar{\ve}}[\vb{Q},\phi]
    &=\bar{E}^{\rm LdG}+\bar\omega_p \bar\lambda\bar{E}_{\bar\ve}^{\rm mix}+\bar\omega_a \bar\lambda\bar{E}_{\bar\ve}^{\rm anch}+\bar\omega_v \bar\lambda^2\bar{E}^{\rm void} \\
    &=\int_{\Omega^{(0)}} \qty[ \frac{1}{2} |\grad\vb{Q}|^2 + \bar\lambda^2 \bar F_b(\vb Q) ] \d \bar{x} + \bar\omega_p \bar\lambda \int_{\Omega^{(0)}} \qty[ \bar{\ve} |\nabla\phi|^2+ \frac{1}{\bar\ve} \phi^2(1-\phi)^2]\d \bar{x} \\
    &\quad +\bar\omega_a \bar\lambda \int_{\Omega^{(0)}} \bar{\ve}\left| \qty(\vb{Q}+\frac{s_+}{3}\vb I)\nabla\phi \right|^2 \d \bar{x} + \bar\omega_v \bar\lambda^2 \int_{\Omega^{(0)}} \frac12(1-\phi)^2 |\vb Q|^2 \d \bar{x}
\end{aligned}
\end{equation}
with the re-scaled domain $\Omega^{(0)}$, re-scaled nondimensional constants
\[ \bar{\lambda}^2 \triangleq \frac{\lambda^2 C}{L},\ 
\bar{\ve}\triangleq \frac{\ve}{\lambda},\ 
\bar{\omega}_p \triangleq \frac{\omega_p}{\sqrt{CL}},\
\bar{\omega}_v \triangleq \frac{\omega_v}{C},\ 
\bar{\omega}_a \triangleq \frac{\omega_a}{\sqrt{CL}},\]
and re-scaled LdG bulk energy $\bar F_b\triangleq \frac1C F_b$, subject to the re-scaled volume constraint
\begin{equation} \label{vol-con}
    \int_{\Omega^{(0)}} \phi\d \bar x = \bar V_0,
\end{equation} 
with $\bar V_0=\lambda^{-3}V_0$ satisfying $0<\bar V_0 < |\Omega^{(0)}|$.

In all subsequent discussion, we focus on the nondimensional functional $\bar E_{\bar\ve}$ \eqref{ldg-phf-nondim}, which is almost identical to \eqref{ldg-phf}, except for the additional $\bar\lambda$ in the leading coefficients and the bars above.
For notational simplicity, we rewrite $\Omega^{(0)}$ as $\Omega$ and drop bars over all constants from \eqref{ldg-phf-nondim} (note: the bar over $\bar F_b$ is kept in order to differ from the classical bulk energy $F_b$ \eqref{ldg-bulk}).
The four terms of \eqref{ldg-phf-nondim} are identified with the LdG energy $E^{\rm LdG}$ \eqref{E-ldg}, the mixing energy $E^{\rm mix}_\ve$ \eqref{E-per}, the anchoring energy $E^{\rm anch}_\ve$ \eqref{E-anch} and the void energy $E^{\rm void}$ \eqref{E-void} respectively.
We denote the admissible space of the order parameters $(\vb Q,\phi)$ by
\begin{equation} \label{admissible-space}
    \mathscr{A}=\qty{ (\vb Q,\phi):
        \vb Q \in H_0^1(\Omega;\mathcal{S}_0),\ 
        \phi \in H_0^1 (\Omega),\
        \int_\Omega\phi\d x=V_0
        }
\end{equation}
where $H_0^1$ is the Sobolev space \cite{adams_sobolev_2003,evans_partial_2010,gilbarg_elliptic_1977}
\begin{equation}
H_0^1(\Omega)=\qty{u\in L^2(\Omega): \int_\Omega [u^2+|\nabla u|^2] \d x<\infty,\ u|_{\pt\Omega}=0},
\end{equation}
$H_0^1(\Omega;V)$ is the Sobolev space with function values in a normed vector space $V$,
and finally,
\begin{equation} \label{trace0}
    \mathcal{S}_0=\{\vb Q\in\BbbR^{3\times 3}: \vb Q^T=\vb Q, \tr \vb Q=0\}
\end{equation}
is the space of symmetric traceless $3\times 3$ matrices.

If $(\vb Q,\phi)$ is a critical point of the functional \eqref{ldg-phf-nondim}, then this pair is a weak solution to the corresponding Euler–Lagrange equations:
\begin{equation} \label{ldgphf-el-Q}
    \begin{aligned}
        \triangle\vb{Q} 
        &= \lambda^2 \qty[ \qty(\frac{A}{C}+\tr\vb Q^2)  \vb Q - \frac{B}{C} \mathscr{P}_0(\vb Q^2)]
        + \omega_v\lambda^2 (1-\phi)^2 \vb Q \\
        &\quad
        +\omega_a\lambda \ve \mathscr{P}_0 \qty[(\nabla\phi \otimes\nabla\phi )\qty(\vb Q+\frac{s_+}{3}\vb I) + \qty(\vb Q+\frac{s_+}{3}\vb I)(\nabla\phi\otimes\nabla\phi)],
    \end{aligned}
\end{equation}
and
\begin{equation} \label{ldgphf-el-phi}
    \triangle\phi + \frac{\omega_a}{\omega_p}\divergence\qty[\qty(\vb Q+\frac{s_+}{3}\vb I)^2 \nabla\phi] = \frac{1}{\ve^2} \phi(\phi-1)(2\phi-1) + \frac{\omega_v\lambda}{2\omega_p\ve} (\phi-1)|\vb Q|^2 + \xi,
\end{equation}
where the projection operator is
\begin{equation} \label{trace0-proj}
    \mathscr{P}_0(\vb A)=\vb A-\frac{\tr \vb A}{3}\vb I,
\end{equation}
maintaining the traceless constraint on $\vb Q$,
and $\xi\in\BbbR$ is a Lagrangian multiplier originating from the volume constraint \eqref{vol-con}.
For a detailed calculation of \eqref{ldgphf-el-Q} and \eqref{ldgphf-el-phi}, please refer to Appendices \ref{sec-el}.

\section{Existence of minimizers} \label{sec-min}
We use the direct method in the calculus of variations \cite{evans_partial_2010} to prove the existence of a minimizer for the diffuse-interface LdG functional.

\begin{lemma} \label{ldgphf-wlsc}
The functional $E_\ve$ \eqref{ldg-phf-nondim} is weakly lower semi-continuous (w.l.s.c.) on the Banach space $(\vb Q,\phi)\in H_0^1(\Omega;\mathcal{S}_0) \times H_0^1(\Omega)$, which means that if $\vb Q_k \rightharpoonup \vb Q,\phi_k\rightharpoonup \phi$ are weakly convergent sequences in $H_0^1$, 
\[\liminf_{k\to\infty} E_\ve[\vb Q_k,\phi_k] \ge E_\ve[\vb Q,\phi].\]
\end{lemma}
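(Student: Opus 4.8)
The plan is to apply the direct method, decomposing $E_\ve$ into its four constituent terms and treating each according to whether it involves gradients (handled by convexity) or only the order parameters themselves (handled by compactness); throughout I work with the nondimensional functional \eqref{ldg-phf-nondim}. Since the inequality is trivial when $\liminf_k E_\ve[\vb Q_k,\phi_k]=+\infty$, I may pass to a subsequence (not relabelled) realizing the $\liminf$ as a finite limit, along which every nonnegative term of $E_\ve$ stays bounded. By the Rellich--Kondrachov theorem, the compact embedding $H_0^1(\Omega)\hookrightarrow\hookrightarrow L^p(\Omega)$ for $1\le p<6$ in three dimensions yields, after a further subsequence, strong convergence $\vb Q_k\to\vb Q$ and $\phi_k\to\phi$ in $L^4(\Omega)$ together with convergence almost everywhere.

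First I would dispatch the purely algebraic terms. The bulk density $\bar F_b$ is a quartic polynomial in $\vb Q$, so the associated Nemytskii operator is continuous from $L^4(\Omega)$ to $L^1(\Omega)$, whence $\int_\Omega\bar F_b(\vb Q_k)\to\int_\Omega\bar F_b(\vb Q)$; alternatively $\bar F_b$ is bounded below and one simply invokes Fatou's lemma after the a.e.\ convergence. The double-well integrand $\phi^2(1-\phi)^2$ and the void integrand $\tfrac12(1-\phi)^2|\vb Q|^2$ are nonnegative and continuous in $(\vb Q,\phi)$, so a.e.\ convergence combined with Fatou's lemma gives lower semicontinuity of these two contributions. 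For the convex gradient terms $\tfrac12\int_\Omega|\nabla\vb Q|^2$ and $\omega_p\lambda\ve\int_\Omega|\nabla\phi|^2$, weak convergence of $\nabla\vb Q_k$ and $\nabla\phi_k$ in $L^2(\Omega)$ together with the weak lower semicontinuity of the $L^2$-norm (a consequence of its convexity and continuity) immediately yields the desired inequalities.

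The \emph{main obstacle} is the anchoring term $\omega_a\lambda\ve\int_\Omega|M_k\nabla\phi_k|^2$, where $M_k:=\vb Q_k+\tfrac{s_+}{3}\vb I$, since it couples $\vb Q_k$, which converges only strongly in $L^4$, with $\nabla\phi_k$, which converges only weakly in $L^2$, and the product of a strongly and a weakly convergent sequence must be handled with care. My plan is to show $M_k\nabla\phi_k\rightharpoonup M\nabla\phi$ and then conclude by weak lower semicontinuity of the $L^2$-norm. Writing $M_k\nabla\phi_k=(M_k-M)\nabla\phi_k+M\nabla\phi_k$ with $M:=\vb Q+\tfrac{s_+}{3}\vb I$, Hölder's inequality gives $\|(M_k-M)\nabla\phi_k\|_{L^{4/3}}\le\|M_k-M\|_{L^4}\|\nabla\phi_k\|_{L^2}\to0$, so the first piece vanishes strongly in $L^{4/3}$; for the second, testing against any $\psi\in L^4(\Omega)$ and using $M^{\!\top}\psi\in L^2(\Omega)$ (as $\vb Q\in L^6\subset L^4$ on the bounded domain) shows $M\nabla\phi_k\rightharpoonup M\nabla\phi$ in $L^{4/3}$. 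Hence $M_k\nabla\phi_k\rightharpoonup M\nabla\phi$ in $L^{4/3}$; but along the chosen subsequence $\{M_k\nabla\phi_k\}$ is bounded in $L^2$, so a further subsequence converges weakly in $L^2$, and uniqueness of weak limits identifies that limit as $M\nabla\phi$. Weak lower semicontinuity of the $L^2$-norm then gives $\liminf_k\int_\Omega|M_k\nabla\phi_k|^2\ge\int_\Omega|M\nabla\phi|^2$.

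Equivalently, one may package all of the above by invoking the classical lower semicontinuity theorem of Ioffe (of Tonelli--Serrin type): the full integrand is nonnegative, Carathéodory, and jointly convex in the gradient variables $(\nabla\vb Q,\nabla\phi)$ (there are no cross terms, and $\nabla\phi^{\!\top}M^{\!\top}M\,\nabla\phi$ is convex in $\nabla\phi$ since $M^{\!\top}M\succeq0$), while depending continuously on the lower-order variables $(\vb Q,\phi)$, which converge a.e. Summing the four term-wise lower bounds along a common further subsequence completes the estimate, and the standard subsequence principle promotes the inequality from the chosen subsequence to the full $\liminf$, establishing the claim.
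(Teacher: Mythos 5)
Your proposal is correct, but its main body takes a genuinely different route from the paper. The paper's proof is a two-line verification of the hypotheses of the classical Tonelli-type theorem (\cite[\S 8.2, Theorem 1]{evans_partial_2010}): the integrand of $E_\ve$ is bounded below (via the explicit lower bound $m_b$ on $\bar F_b$) and is jointly convex in the gradient variables $(\grad\vb Q,\nabla\phi)$, since the three gradient-dependent terms, including the anchoring density $\left|\left(\vb Q+\frac{s_+}{3}\vb I\right)\nabla\phi\right|^2$ whose coefficient matrix involves only the lower-order variable $\vb Q$, form a positive semidefinite quadratic form; w.l.s.c. then follows at once. This is exactly the ``packaging'' you sketch in your final paragraph. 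Your main argument instead proves the result by hand: subsequence extraction realizing the $\liminf$, Rellich--Kondrachov to get strong $L^4$ and a.e.\ convergence, Fatou/Nemytskii continuity for the bulk, double-well and void terms, weak l.s.c.\ of the $L^2$-norm for the pure gradient terms, and a careful strong-times-weak product argument showing $\bigl(\vb Q_k+\frac{s_+}{3}\vb I\bigr)\nabla\phi_k\rightharpoonup\bigl(\vb Q+\frac{s_+}{3}\vb I\bigr)\nabla\phi$ (first weakly in $L^{4/3}$ via H\"older, then upgraded to weak $L^2$ by boundedness and uniqueness of limits) for the anchoring term. All of these steps check out, including the exponent bookkeeping. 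What the two approaches buy: the paper's citation is far shorter, but it quietly relies on the general theorem applying to vector-valued unknowns with an integrand whose gradient-coefficients depend on the lower-order variables; your term-by-term proof is self-contained, isolates the only genuinely delicate interaction in $E_\ve$ --- the coupling of the strongly convergent $\vb Q_k$ with the weakly convergent $\nabla\phi_k$ in the anchoring energy --- and in fact yields slightly more than l.s.c.\ (actual convergence of the bulk and mixing potential terms along the subsequence).
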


\begin{proof}
As \cite[\S 8.2,Theorem 1]{evans_partial_2010}, 
for an integral energy functional of the form
\[\int_\Omega F(\nabla \bm u, \bm u, x)\d x\]
where $\bm u: \Omega\to\BbbR^m$ and $F:\BbbR^{m\times n}\times \BbbR^m\times \Omega \to \BbbR$, 
if $F$ is uniformly bounded below and convex with respect to its first argument, then
the functional is {w.l.s.c.} with respect to $\bm u\in H^1(\Omega;\BbbR^m)$.
We look at the individual terms.

Examine the form \eqref{ldg-phf-nondim} with $\bm u=(\vb Q,\phi): \Omega\to \mathcal{S}_0\times \BbbR$. The energy density is bounded from below because all terms are positive except for the LdG bulk energy $\bar F_b=C^{-1}F_b$ \eqref{ldg-bulk}, which satisfies 
\begin{equation} \label{bulk-min}
    \bar F_b(\vb Q)\ge m_b >-\infty, \quad\forall\vb Q\in\mathcal{S}_0
\end{equation}
for some number $m_b\in\BbbR$ that can be explicitly computed \cite[eq. (41)]{mottram_introduction_2014}.
Moreover, the terms involving the gradients, which read
\[\frac{1}{2} |\grad\vb{Q}|^2 + \omega_p\lambda\ve|\nabla\phi|^2+\omega_a\lambda\ve \left| \qty(\vb{Q}+\frac{s_+}{3}\vb I)\nabla\phi \right|^2,\]
are convex with respect to $(\grad\vb Q,\nabla\phi)$ since they are positive-definite quadratic polynomials. Therefore, the energy density of $E_\ve$ \eqref{ldg-phf-nondim} satisfies the criteria above, and is w.l.s.c. over the space $H_0^1(\Omega;\mathcal{S}_0)\times H_0^1(\Omega)$.
\end{proof}

\begin{proposition} \label{min-exist}
The functional, $E_\ve$ in \eqref{ldg-phf-nondim}, has at least one global minimizer $(\vb{Q}^*_\ve,\phi^*_\ve)$ in the admissible set $\mathscr{A}$ specified in \eqref{admissible-space}.
\end{proposition}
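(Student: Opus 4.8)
The plan is to run the direct method of the calculus of variations, with the weak lower semi-continuity of Lemma \ref{ldgphf-wlsc} doing the decisive work at the end. The argument has four stages: first, check that the admissible set $\mathscr{A}$ is non-empty and that $E_\ve$ is bounded below, so that $\mu:=\inf_{\mathscr{A}} E_\ve$ is a finite real number; second, extract uniform $H^1$ bounds along a minimizing sequence (coercivity); third, pass to a weak limit and verify that it stays in $\mathscr{A}$; fourth, invoke w.l.s.c.\ to conclude that this limit attains $\mu$. For the non-emptiness, since $0<V_0<|\Omega|$ I would exhibit an explicit admissible pair, e.g.\ $\vb Q\equiv\vb 0$ together with a smooth, compactly supported $\phi$ rescaled so that $\int_\Omega\phi\d x=V_0$. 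Boundedness below follows at once from the structure of \eqref{ldg-phf-nondim}: every term is non-negative except $\lambda^2\bar F_b(\vb Q)$, and the pointwise bound \eqref{bulk-min} gives $E_\ve\ge \lambda^2 m_b|\Omega|>-\infty$.

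For the coercivity stage, I would take a minimizing sequence $(\vb Q_k,\phi_k)\subset\mathscr{A}$ with $E_\ve[\vb Q_k,\phi_k]\to\mu$. Discarding the non-negative mixing, anchoring and void contributions and using \eqref{bulk-min} once more leaves the purely elastic estimate
\[
\tfrac12\,\|\nabla\vb Q_k\|_{L^2(\Omega)}^2+\omega_p\lambda\ve\,\|\nabla\phi_k\|_{L^2(\Omega)}^2
\;\le\; E_\ve[\vb Q_k,\phi_k]-\lambda^2 m_b|\Omega|,
\]
whose right-hand side is bounded uniformly in $k$. Since $\vb Q_k,\phi_k\in H_0^1$, the Poincar\'e inequality promotes these gradient bounds to uniform bounds on $\|\vb Q_k\|_{H^1}$ and $\|\phi_k\|_{H^1}$.

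In the third stage, reflexivity of the Hilbert space $H_0^1(\Omega;\mathcal{S}_0)\times H_0^1(\Omega)$ yields a subsequence (not relabelled) with $\vb Q_k\rightharpoonup\vb Q^*_\ve$ and $\phi_k\rightharpoonup\phi^*_\ve$ weakly in $H^1$. The Dirichlet data survive because $H_0^1$ is a closed subspace, hence weakly closed. The volume constraint passes to the limit because $\phi\mapsto\int_\Omega\phi\d x$ is a bounded linear functional, and hence weakly continuous; equivalently, the Rellich--Kondrachov theorem gives $\phi_k\to\phi^*_\ve$ strongly in $L^2$, so that $\int_\Omega\phi^*_\ve\d x=\lim_k\int_\Omega\phi_k\d x=V_0$. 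Therefore $(\vb Q^*_\ve,\phi^*_\ve)\in\mathscr{A}$. Finally, Lemma \ref{ldgphf-wlsc} gives $E_\ve[\vb Q^*_\ve,\phi^*_\ve]\le\liminf_k E_\ve[\vb Q_k,\phi_k]=\mu$, while admissibility forces $E_\ve[\vb Q^*_\ve,\phi^*_\ve]\ge\mu$; hence the two agree and $(\vb Q^*_\ve,\phi^*_\ve)$ is a global minimizer.

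All four stages are standard, and I expect no genuinely hard obstacle; the two points deserving attention are the coercivity in the second stage and the constraint-closure in the third. For coercivity, it is worth noting that the anchoring cross-term $\ve|(\vb Q+\tfrac{s_+}{3}\vb I)\nabla\phi|^2$ is only non-negative and contributes nothing to the a priori estimate, so the $H^1$ control rests entirely on the two pure gradient terms together with Poincar\'e. For the constraint, the subtlety is that the affine condition $\int_\Omega\phi\d x=V_0$ must survive passage to the \emph{weak} limit; this is exactly where one leans on the weak continuity of the linear volume functional (or, what amounts to the same thing, on the compactness of the embedding $H^1\hookrightarrow L^2$) rather than on weak convergence alone.
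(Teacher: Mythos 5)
Your proposal is correct and follows essentially the same route as the paper: the direct method, with non-emptiness of $\mathscr{A}$, the coercivity bound obtained from \eqref{bulk-min} and the pure gradient terms, weak closedness of the constraint set, and Lemma \ref{ldgphf-wlsc} supplying the weak lower semi-continuity. You merely spell out the steps the paper leaves implicit (minimizing sequence, Poincar\'e, weak continuity of the volume functional), which if anything treats the affine constraint $\int_\Omega\phi\,\d x=V_0$ more carefully than the paper's ``closed subspace'' phrasing.
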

\begin{proof}
The admissible set $\mathscr{A}$ \eqref{admissible-space} is nonempty since there always exists a smooth $\phi\in H_0^1$ such that $\int_\Omega \phi = V_0$.

Moreover, all the terms in $E_\ve$ are bounded from below (recall \eqref{bulk-min}), yielding the following coercivity condition:
\begin{equation} \label{ldgphf-coer}
    \begin{aligned}
        E_\ve[\vb{Q},\phi] &\ge \int_\Omega \left[ \frac{1}{2} |\grad\vb{Q}|^2+ \omega_p\ve |\nabla\phi|^2 + \lambda^2 m_b \right]\d x \\
    &=\frac{L}{2} \|\grad\vb Q\|_{L^2}^2 + \omega_p\ve \|\nabla\phi\|_{L^2}^2 + \lambda^2 m_b |\Omega|.
    \end{aligned}
\end{equation}

The weak lower semi-continuity of $E_\ve$ over the feasible set is guaranteed by Lemma \ref{ldgphf-wlsc} since the admissible set $\mathscr{A}$ is a closed subspace of $H_0^1(\Omega;\mathcal{S}_0)\times H_0^1(\Omega)$. 
Therefore, the existence of a global minimizer follows from the direct method in the calculus of variations.
\end{proof}

\begin{corollary} \label{min-exist-Q}
For each fixed $\phi\in H_0^1$, the w.l.s.c. and coercivity argument still applies with respect to the variable $\vb Q$. Hence, there exists a $\vb Q$-minimizer, $\vb Q_\ve\in H_0^1(\Omega;\mathcal{S}_0)$ such that
\[\vb Q_\ve=\vb Q_\ve(\phi)=\argmin_{\vb Q\in H_0^1(\Omega; \mathcal{S}_0)} E_\ve[\vb Q,\phi].\]
\end{corollary}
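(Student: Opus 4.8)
The plan is to run the direct method one more time, now with $\phi\in H_0^1(\Omega)$ frozen and $\vb Q$ the only free variable, over the full Hilbert space $H_0^1(\Omega;\mathcal{S}_0)$. Since the volume constraint in \eqref{admissible-space} sits on $\phi$, no constraint is imposed on $\vb Q$ and the feasible set is a closed subspace, hence weakly closed. First I would record that $\vb Q\mapsto E_\ve[\vb Q,\phi]$ is bounded below: the mixing term $\omega_p\lambda E^{\rm mix}_\ve[\phi]$ is now a fixed nonnegative constant, the void and anchoring integrands are nonnegative, and the bulk density obeys $\bar F_b\ge m_b$ by \eqref{bulk-min}. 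Hence
\[
E_\ve[\vb Q,\phi]\ge \tfrac12\|\nabla\vb Q\|_{L^2}^2 + \lambda^2 m_b|\Omega|,
\]
and because $\vb Q\in H_0^1$, the Poincaré inequality upgrades control of $\|\nabla\vb Q\|_{L^2}$ to control of the full $H^1$ norm, giving coercivity in $\vb Q$.

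Next I would take a minimizing sequence $\{\vb Q_k\}$ for $E_\ve[\cdot,\phi]$. Coercivity bounds $\{\vb Q_k\}$ in $H_0^1(\Omega;\mathcal{S}_0)$, so by reflexivity a subsequence converges weakly, $\vb Q_k\rightharpoonup \vb Q_\ve$, and by the Rellich--Kondrachov compact embedding $H_0^1(\Omega)\hookrightarrow L^2(\Omega)$ we also get strong $L^2$ convergence and, along a further subsequence, a.e.\ convergence. It then remains to establish weak lower semicontinuity of each term in $\vb Q$. The elastic term $\tfrac12\int|\nabla\vb Q|^2$ is w.l.s.c.\ because it is convex in $\nabla\vb Q$ and bounded below; this is exactly the specialization of Lemma \ref{ldgphf-wlsc} in which $\nabla\phi(x)$ is treated as a fixed coefficient of the integrand, so the Evans criterion (convexity in the gradient slot together with a uniform lower bound) still applies.

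The one point that deserves care, and which I expect to be the only genuine subtlety, is the anchoring term $\omega_a\lambda\ve\int_\Omega|(\vb Q+\tfrac{s_+}{3}\vb I)\nabla\phi|^2\,\d x$: with $\phi$ merely in $H^1$ the factor $\nabla\phi$ is only $L^2$, so the integrand is a product that need not be controlled by weak $H^1$ convergence of $\vb Q_k$ alone. The clean way around this is to observe that the integrand is nonnegative and convex in $\vb Q$, so combining this with the a.e.\ convergence $\vb Q_k\to\vb Q_\ve$ extracted above, Fatou's lemma immediately yields lower semicontinuity of this term. The void term $\int\tfrac12(1-\phi)^2|\vb Q|^2$ is handled identically by Fatou (or directly by strong $L^2$ convergence of $\vb Q_k$). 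Assembling these, $E_\ve[\vb Q_\ve,\phi]\le\liminf_k E_\ve[\vb Q_k,\phi]=\inf_{\vb Q}E_\ve[\vb Q,\phi]$, so $\vb Q_\ve$ is the desired minimizer. Everything apart from this product/Fatou observation is a verbatim restriction of the arguments already used in Lemma \ref{ldgphf-wlsc} and Proposition \ref{min-exist}.
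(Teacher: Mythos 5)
Your proof is correct, and at the architectural level it is the same as the paper's: the paper proves this corollary by simply asserting that the coercivity and w.l.s.c.\ arguments of Lemma \ref{ldgphf-wlsc} and Proposition \ref{min-exist} carry over verbatim when $\phi$ is frozen. Where you genuinely diverge is in how the lower semicontinuity of the $\phi$-coupled terms is justified. The paper's route implicitly re-invokes the convexity criterion of \cite[\S 8.2, Theorem 1]{evans_partial_2010} with $\phi(x)$, $\nabla\phi(x)$ demoted to coefficients of the integrand; but that theorem is stated for integrands that are smooth (or at least continuous) in $x$, and with $\phi$ merely in $H_0^1$ the coefficient $\nabla\phi\otimes\nabla\phi$ is only $L^1$ in $x$, so the citation is not literally applicable without appealing to a Carath\'eodory-integrand version of the w.l.s.c.\ theorem. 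You correctly isolate this as the one subtle point and patch it differently: extract a.e.\ convergence of the minimizing sequence via Rellich--Kondrachov and apply Fatou's lemma to the nonnegative anchoring and void integrands (and, if one wishes, to $\bar F_b-m_b\ge 0$ for the bulk term), reserving the convexity argument only for the elastic term $\tfrac12\int|\nabla\vb Q|^2$, where the integrand has no $x$-dependence at all. This buys a self-contained argument that is robust to the rough coefficients, at the cost of passing to a further subsequence; the paper's route is shorter but leans on a strengthening of the cited theorem that it does not state. Both yield the corollary, and your version is, if anything, the more rigorous of the two.
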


\section{Maximum principle on a fixed domain} \label{sec-reg}

In this section, we prove that for a fixed $\phi$, the critical points of $E_\ve$ with respect to $\vb Q$ (referred to as $\vb Q$-critical points above) are bounded in $L^\infty$ uniformly, as $\ve\to 0$.
The maximum principle and uniqueness results have been studied thoroughly in the LdG framework; see \cite{lamy_bifurcation_2014,majumdar_equilibrium_2010}, but results for the diffuse-interface model \eqref{ldg-phf} involve different techniques, especially when dealing with the anchoring penalty \eqref{E-anch}.

\subsection{Uniform maximum principle}
Before introducing the uniform maximum principle, we improve the regularity of $\vb Q_\ve$ with the $L^p$ theory of second-order elliptic equations \cite{gilbarg_elliptic_1977}. Let
\[W^{m,p}(\Omega)=\qty{u\in L^p(\Omega): \|u\|_{W^{m,p}}^p = \sum_{k=0}^m \int_\Omega |\nabla^k u|^p <\infty}\]
be the general Sobolev spaces.

\begin{lemma} \label{reg-coarse}
If $\phi\in W^{1,\infty}(\Omega)$, then a $\vb Q$-critical point, $\vb Q_{\ve}\in H_0^1(\Omega;\mathcal{S}_0)$ of $E_{\ve}[\cdot,\phi]$ \eqref{ldg-phf-nondim}, is in $W^{2,p}$ for all $p<\infty$ and in $C^{1,\alpha}$, for all $\alpha<1$.
Since $\vb Q_\ve$ is a weak solution, continuity means that $\vb Q_\ve$ is equal to a $C^{1,\alpha}$ function a.e..
\end{lemma}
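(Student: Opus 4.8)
The plan is to run an elliptic bootstrap on the Euler--Lagrange equation \eqref{ldgphf-el-Q}, viewed as a semilinear system whose principal part is the diagonal (componentwise) operator $\triangle Q^{ij}$. Since $\phi$ is fixed and lies in $W^{1,\infty}(\Omega)$, both $\phi$ and $\nabla\phi$ belong to $L^\infty(\Omega)$, so every $\phi$-dependent coefficient on the right-hand side --- the void coefficient $(1-\phi)^2$ and the anchoring coefficient built from $\nabla\phi\otimes\nabla\phi$ --- is bounded. I would therefore treat the right-hand side of \eqref{ldgphf-el-Q} as a forcing term $f(x)$ whose only genuinely $\vb Q$-dependent growth is polynomial: a cubic term $(\tr\vb Q^2)\vb Q$, a quadratic term $\mathscr P_0(\vb Q^2)$, and linear terms (from $\tfrac{A}{C}\vb Q$, the void term $(1-\phi)^2\vb Q$, and the part of the anchoring term linear in $\vb Q$), together with a bounded inhomogeneous piece arising from the $\tfrac{s_+}{3}\vb I$ contribution inside the anchoring term. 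The regularity then improves by alternately estimating $f$ in $L^p$ and invoking the $L^p$ Calder\'on--Zygmund estimates for $-\triangle$.

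Concretely, I would proceed as follows. First, from $\vb Q_\ve\in H_0^1$ and the three-dimensional Sobolev embedding $H^1(\Omega)\hookrightarrow L^6(\Omega)$ one has $\vb Q_\ve\in L^6$; the cubic term is then in $L^{6/3}=L^2$, the quadratic term in $L^3\subset L^2$, and all linear and bounded terms in $L^2$, so $f\in L^2(\Omega)$ and elliptic regularity gives $\vb Q_\ve\in W^{2,2}(\Omega)$. Second, I would use the embedding $W^{2,2}(\Omega)\hookrightarrow C^{0,1/2}(\overline\Omega)\subset L^\infty(\Omega)$, valid in dimension three since $2\cdot 2>3$, to upgrade $\vb Q_\ve$ to a bounded function. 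Third, with $\vb Q_\ve\in L^\infty$ every polynomial term is bounded and the coefficients remain bounded, so $f\in L^\infty(\Omega)\subset L^p(\Omega)$ for every $p<\infty$, and the $L^p$ estimates yield $\vb Q_\ve\in W^{2,p}(\Omega)$ for all $p<\infty$. Finally, for $p>3$ the embedding $W^{2,p}(\Omega)\hookrightarrow C^{1,1-3/p}(\overline\Omega)$ gives $\vb Q_\ve\in C^{1,\alpha}$ with $\alpha=1-3/p\to 1$ as $p\to\infty$, which is the claimed conclusion.

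The step I expect to require the most care is the very first improvement from $H^1$ to $W^{2,2}$, because the cubic nonlinearity sits exactly at the critical Sobolev exponent: $\vb Q_\ve\in L^6$ places the cubic term only in $L^2$, with no margin. This is precisely the borderline case in which the bootstrap still closes in dimension three; once $L^\infty$ is reached, the remaining steps are routine. A secondary point is the role of the hypothesis $\phi\in W^{1,\infty}$: it is exactly what keeps the anchoring coefficient $\nabla\phi\otimes\nabla\phi$ (and the inhomogeneous term it generates) bounded, so these contributions never degrade the integrability of $f$; a weaker assumption on $\phi$ would force a more delicate joint bootstrap in both variables. Lastly, I would flag that applying the $L^p$ estimates globally is delicate on a merely Lipschitz domain: in general one obtains the interior conclusions $W^{2,p}_{\mathrm{loc}}$ and $C^{1,\alpha}_{\mathrm{loc}}$, which already deliver the continuity needed by the uniform maximum principle of the next subsection, while the global estimates follow once $\pt\Omega$ is taken smooth enough.
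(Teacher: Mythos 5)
Your proof is correct and follows essentially the same elliptic bootstrap as the paper: start from $H_0^1\hookrightarrow L^6$ in 3D, place the cubic right-hand side of \eqref{ldgphf-el-Q} in $L^2$, apply the $L^p$ elliptic estimates to get $W^{2,2}$, embed into $L^\infty$, and then conclude $W^{2,p}$ for all $p<\infty$ and hence $C^{1,\alpha}$ for all $\alpha<1$. Your closing remark --- that the global (rather than merely interior) $W^{2,p}$ and $C^{1,\alpha}$ conclusions require more boundary regularity than the Lipschitz assumption on $\Omega$ --- is a legitimate caveat that the paper's own proof glosses over, but it does not alter the substance of the argument.
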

\begin{proof}
By definition, the critical point $\vb Q_\ve$ is a solution of the equation \eqref{ldgphf-el-Q}.
Examining the right-hand side, we find that it is a cubic polynomial of $\vb Q$ (because of the bulk energy derivative), and its coefficients, excluding constants, are determined by $\phi,\nabla\phi$, which in turn are uniformly bounded by assumption.

If $\vb Q_\ve\in L^p(\Omega),$ then the RHS (right-hand side) of \eqref{ldgphf-el-Q} is in $L^{p/3}(\Omega)$.
Therefore, by the $L^p$ theory of elliptic equations \cite{gilbarg_elliptic_1977}, $\vb Q_\ve \in W^{2,\frac{p}{3}}$ if $p>3$.
This enables us to improve the regularity of $\vb Q_\ve$ through a chain of Sobolev embeddings.

In our case, we have $\vb Q_\ve\in H_0^1,$ which implies that $\vb Q_\ve\in L^6$ from the Sobolev embedding theorem in 3D. 
Using the arguments above iteratively, we obtain $\vb Q_\ve \in W^{2,p}$ for all $p<\infty$, and with one extra step of embedding, this yields $\vb Q_\ve \in C^{1,\alpha}$ for all $\alpha<1$.
\end{proof}
 
Under stronger smoothness assumptions, we prove that $\vb Q$-critical points of the nondimensional energy \eqref{ldg-phf-nondim} are \textit{uniformly} bounded. The bound is uniform in the sense that it is independent of the capillary width $\ve$.
\begin{theorem} \label{Q-uni-bound}
Let $\phi_\ve\in W^{1,\infty}(\Omega)$ be a phase field, with the following extra assumptions.
\begin{enumerate}[\rm(i)]
\item $\phi_\ve\in[0,1]$ for all $x$.
\item  \label{pg:ne1}
We assume that there exists a $C^2$ vector field $\vb n_\ve$ defined on $\Omega$ satisfying
\[\begin{cases}
    |\vb n_\ve|\le 1, \\
    \|\vb n_\ve\|_{C^2(\Omega)}\le M_N <\infty
\end{cases}\]
for some uniform constant $M_N$, such that within the diffuse interface
\[\Gamma_\ve=\overline{\{x\in\Omega: 0<\phi(x)<1\}}\]
it holds that
\begin{equation} \label{n-eps}
    \nabla\phi_\ve = |\nabla\phi_\ve|\vb n_\ve.
\end{equation}
\end{enumerate}
Suppose $\vb Q_\ve$ is a $\vb Q$-critical point of $E_{\ve}[\cdot,\phi_\ve]$ \eqref{ldg-phf-nondim}.
Then for any fixed $\lambda_0> \lambda$, 
there exists a uniform constant $M$ determined by $A,B,C,\omega_v,\lambda_0$
and the bound $M_N$, but independent of $\ve$, such that
\[ \sup_{x\in\Omega} |\vb Q_\ve| \le M \lambda^{-\frac23}.\]
The bound is given in the a.e. sense.
\end{theorem}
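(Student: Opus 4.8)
The natural tool is a maximum principle applied to $\vb Q_\ve$ through its Euler--Lagrange equation \eqref{ldgphf-el-Q}. First I would invoke Lemma \ref{reg-coarse}, so that $\vb Q_\ve\in W^{2,p}(\Omega)\cap C^{1,\alpha}(\Omega)$ for every $p<\infty$ and $\alpha<1$; this legitimises computing $\triangle|\vb Q_\ve|^2=2\langle\vb Q_\ve,\triangle\vb Q_\ve\rangle+2|\grad\vb Q_\ve|^2$ a.e. and substituting the right-hand side of \eqref{ldgphf-el-Q}. If one tries the textbook choice $u:=|\vb Q_\ve|^2$ directly, the bulk contraction produces the favourable quartic growth $\lambda^2u^2$ (after estimating $|\tr\vb Q_\ve^3|\le\tfrac{1}{\sqrt6}|\vb Q_\ve|^3$), the void term is nonnegative, but the anchoring contraction equals $\omega_a\lambda\ve|\nabla\phi_\ve|^2\,(2|\vb Q_\ve\vb n_\ve|^2+\tfrac{2s_+}{3}\vb n_\ve^{T}\vb Q_\ve\vb n_\ve)$, whose bracket is only bounded below by $-\tfrac{s_+^2}{18}$. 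Since the prefactor $\ve|\nabla\phi_\ve|^2$ is of size $O(\ve^{-1})$ across the diffuse interface, this term is not controlled uniformly in $\ve$ and the naive argument collapses. This is precisely the obstacle flagged before the statement, and overcoming it is the whole point.

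The key idea is to run the maximum principle not on $u$ but on the \emph{corrected} quantity
\[
\Psi:=|\vb Q_\ve|^2+s_+\,\vb n_\ve^{T}\vb Q_\ve\vb n_\ve=|\vb Q_\ve|^2+s_+\,\langle\vb Q_\ve,\vb n_\ve\otimes\vb n_\ve\rangle,
\]
which is engineered to complete the anchoring square. Writing $\vb M:=\vb Q_\ve+\tfrac{s_+}{3}\vb I$ and using the structural assumption \eqref{n-eps} (so that $|\vb n_\ve|=1$ wherever $\nabla\phi_\ve\neq0$, which is the only region where the anchoring terms live), a direct computation of $\tfrac12\triangle\Psi=\tfrac12\triangle u+\tfrac{s_+}{2}\triangle(\vb n_\ve^{T}\vb Q_\ve\vb n_\ve)$ shows that the anchoring pieces coming from the two summands add up to
\[
2\,\omega_a\lambda\ve|\nabla\phi_\ve|^2\,|\vb M\vb n_\ve|^2\ \ge\ 0.
\]
In other words, the single term that destroyed the naive estimate is turned into a nonnegative perfect square; note that $|\vb M\vb n_\ve|^2$ is exactly the anchoring integrand in \eqref{E-anch}, which is why this cancellation is natural. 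I expect this algebraic identity to be the linchpin of the proof.

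It then remains to control the by-products of differentiating the $x$-dependent field $\vb n_\ve$. Expanding $\triangle(\vb n_\ve^{T}\vb Q_\ve\vb n_\ve)$ generates, besides the anchoring and bulk/void contractions against $\vb n_\ve\otimes\vb n_\ve$, a cross term $s_+\langle\grad\vb Q_\ve,\grad(\vb n_\ve\otimes\vb n_\ve)\rangle$ and a zeroth-order term $\tfrac{s_+}{2}\langle\vb Q_\ve,\triangle(\vb n_\ve\otimes\vb n_\ve)\rangle$. The former is absorbed by the ever-present $|\grad\vb Q_\ve|^2$ via Young's inequality, at the cost of an additive constant of size $(s_+M_N)^2$; the latter is bounded by $CM_N^2|\vb Q_\ve|$, \emph{linear} in $\vb Q_\ve$ and independent of both $\ve$ and $\lambda$, thanks to $\|\vb n_\ve\|_{C^2(\Omega)}\le M_N$. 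The bulk contractions against $\vb n_\ve\otimes\vb n_\ve$ are of order $\lambda^2|\vb Q_\ve|^3$ at worst, hence subordinate to the quartic. Collecting everything gives the pointwise differential inequality
\[
\tfrac12\triangle\Psi\ \ge\ \lambda^2|\vb Q_\ve|^4-C_1\lambda^2|\vb Q_\ve|^3-C_2\lambda^2|\vb Q_\ve|^2-C_3M_N^2|\vb Q_\ve|-C_4,
\]
with constants depending only on $A,B,C,\omega_v$ and $M_N$.

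Finally, the right-hand side is strictly positive once $|\vb Q_\ve|$ exceeds a threshold; balancing the quartic $\lambda^2|\vb Q_\ve|^4$ against the $\lambda$-independent linear term $C_3M_N^2|\vb Q_\ve|$ yields the threshold $|\vb Q_\ve|\sim M_N^{2/3}\lambda^{-2/3}$, which is where the exponent in the statement originates. On the super-level set where $\Psi$ is large, $\Psi$ is therefore subharmonic, while $\Psi=0$ on $\partial\Omega$ because $\vb Q_\ve|_{\partial\Omega}=\vb 0$; the weak maximum principle for $W^{2,p}$ functions then forbids an interior maximum above that level, forcing $\Psi$---and hence $|\vb Q_\ve|^2$, since $\bigl|\Psi-|\vb Q_\ve|^2\bigr|\le s_+|\vb Q_\ve|$---to stay below it a.e. The $O(1)$ contributions carried by $C_1,C_2,C_4$ are reabsorbed by writing them as $O(1)\le O(1)\,\lambda_0^{2/3}\,\lambda^{-2/3}$ for $\lambda<\lambda_0$, which is the sole reason the final constant $M$ is permitted to depend on $\lambda_0$. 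The main difficulty is thus concentrated entirely in discovering the corrector $s_+\vb n_\ve^{T}\vb Q_\ve\vb n_\ve$ and verifying the perfect-square identity; once that is in hand, the remaining estimates are routine, with the $C^2$-control of $\vb n_\ve$ doing exactly the work needed to tame the error terms.
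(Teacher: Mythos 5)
Your proposal is correct and follows essentially the same route as the paper: your corrected quantity $\Psi=|\vb Q_\ve|^2+s_+\,\vb n_\ve^{T}\vb Q_\ve\vb n_\ve$ coincides, up to the bounded term $\tfrac{s_+^2}{4}|\vb n_\ve|^4$, with the paper's $|\vb Q_\ve+\tfrac{s_+}{2}\vb n_\ve\otimes\vb n_\ve|^2$ (the paper contracts the Euler--Lagrange equation with $\vb Q_\ve+\alpha\vb n_\ve\otimes\vb n_\ve$ and chooses $\alpha=\tfrac{s_+}{2}$), and the perfect-square cancellation of the anchoring contraction, the $C^2$-control of the $\vb n_\ve$-error terms, and the balancing of $\lambda^2|\vb Q_\ve|^4$ against the $\lambda$-independent linear term to produce the exponent $-\tfrac23$ are exactly the paper's steps. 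The only (immaterial) difference is the concluding step, where the paper runs a Stampacchia-type truncation with the test function $\psi_k=\max\{|\vb Q_\ve+\tfrac{s_+}{2}\vb n_\ve\otimes\vb n_\ve|^2-k,0\}$ and an integration by parts, rather than invoking the weak maximum principle for $W^{2,p}$ subsolutions on super-level sets as you do; both closing arguments are standard and valid here.
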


\begin{proof}
In the following proof, we drop the subscript $\ve$ for notational simplicity.

Take the dot product of $\vb Q+\alpha\vb{n}\otimes\vb{n}$ with the Euler-Lagrange equation \eqref{ldgphf-el-Q}, where $\alpha$ is a number yet to be determined. Recall the operator $\mathscr{P}_0$ as defined in \eqref{trace0-proj}.
\begin{equation} \label{el-dot-Q}
\begin{aligned}
    &(\vb Q+\alpha\vb{n}\otimes\vb{n}):\triangle \vb Q\\
    &= \lambda^2 \underbrace{(\vb Q+\alpha\vb{n}\otimes\vb{n}):\qty[ \qty(\frac{A}{C}+\tr\vb Q^2)  \vb Q - \frac{B}{C} \mathscr{P}_0(\vb Q^2)]}_{(I)}
    + \omega_v\lambda^2 (1-\phi)^2 \underbrace{(\vb Q+\alpha\vb{n}\otimes\vb{n}):\vb Q}_{(II)} \\
    &\quad
    +\omega_a\lambda \ve \underbrace{(\vb Q+\alpha\vb{n}\otimes\vb{n}):\mathscr{P}_0 \qty[(\nabla\phi \otimes\nabla\phi )\qty(\vb Q+\frac{s_+}{3}\vb I) + \qty(\vb Q+\frac{s_+}{3}\vb I)(\nabla\phi\otimes\nabla\phi)]}_{(III)}.
\end{aligned}
\end{equation}

For the LHS (left-hand side) of \eqref{el-dot-Q}, we compute
\begin{align*}
    (\vb Q+\alpha\vb{n}\otimes\vb{n}):\triangle \vb Q
    &=(\vb Q+\alpha\vb{n}\otimes\vb{n}):\triangle (\vb Q+\alpha\vb{n}\otimes\vb{n}) - (\vb Q+\alpha\vb{n}\otimes\vb{n}):\triangle(\alpha\vb{n}\otimes\vb{n})\\
&=\frac{1}{2} \triangle(|\vb Q+\alpha\vb{n}\otimes\vb{n}|^2) - |\bm\nabla(\vb Q+\alpha\vb{n}\otimes\vb{n})|^2 - \alpha(\vb Q+\alpha\vb{n}\otimes\vb{n}):\triangle(\vb{n}\otimes\vb{n}) . 
\end{align*}
We have used the elementary equality $\vb A:\triangle \vb A=\frac12 \triangle(|\vb A|^2) - |\grad\vb A|^2$ ($\vb A=\vb Q+\alpha\vb{n}\otimes\vb{n}$). It is well-defined in the weak sense because $\vb Q+\alpha\vb{n}\otimes\vb{n}\in C^0\cap H^1$ by the regularity estimate from Lemma \ref{reg-coarse}.

For the RHS of \eqref{el-dot-Q}, we compute the three terms separately. The first term is
\begin{align*}
    (I) &=(\vb Q+\alpha\vb{n}\otimes\vb{n}):\qty[\qty(\frac{A}{C}+\tr\vb Q^2 ) \vb Q -\frac{B}{C} \qty(\vb Q^2-\frac{\tr\vb Q^2}{3}\vb I)] \\
    &=\qty(\frac{A}{C}+\tr\vb Q^2)(\tr \vb Q^2+\alpha\vb n\cdot\vb{Qn}) - \frac{B}{C} \qty(\tr\vb Q^3+\alpha|\vb{Qn}|^2-\frac{\alpha\tr\vb Q^2}{3}).
\end{align*}
The second term equals
\[ (II)= \tr\vb Q^2 + \alpha\tr(\vb{n}\otimes\vb{n}\vb Q)
= \tr\vb Q^2 + \alpha\vb n\cdot\vb{Qn}.\]
We have extensively used the equality $\tr\vb{AB}=\tr\vb{BA}$.

Suppose first that $x\in\Gamma$, so by the assumption \eqref{n-eps}, we have $\nabla\phi(x)=|\nabla\phi(x)| \vb n(x)$. Then we write
\begin{align*}
   (III)
&=|\nabla\phi|^2 (\vb Q+\alpha\vb{n}\otimes\vb{n}):\mathscr{P}_0 \qty[(\vb{n}\otimes\vb{n}) \qty(\vb Q+\frac{s_+}{3}\vb I) + \qty(\vb Q+\frac{s_+}{3}\vb I)(\vb{n}\otimes\vb{n})] \\
&=2|\nabla\phi|^2 (\vb Q+\alpha\vb{n}\otimes\vb{n}):\mathscr{P}_0 \qty[(\vb{n}\otimes\vb{n}) \qty(\vb Q+\frac{s_+}{3}\vb I)]
\end{align*}
using $\vb{n}\otimes\vb{n}=\frac{1}{|\nabla\phi|^2}\nabla\phi\otimes\nabla\phi$ and that $\vb Q$ is symmetric. 
The image of $\mathscr{P}_0$ has zero trace and hence, the product is unchanged if we add $\frac{s_+}{3}\vb I$ to $\vb Q+\alpha\vb{n}\otimes\vb{n}$.
Let $\tilde{\vb Q}=\vb Q+\frac{s_+}{3}\vb I$ (so $\tr\tilde{\vb Q}=s_+$), and we get
\begin{align*}
    (III)
    &=2|\nabla\phi|^2(\tilde{\vb Q}+\alpha\vb{n}\otimes\vb{n}): \mathscr{P}_0((\vb{n}\otimes\vb{n})\tilde{\vb Q}) \\
    &=2|\nabla\phi|^2(\tilde{\vb Q}+\alpha\vb{n}\otimes\vb{n}):\qty[(\vb{n}\otimes\vb{n})\tilde{\vb Q}-\frac{\tr((\vb{n}\otimes\vb{n})\tilde{\vb Q})}{3} \vb I] \\
    &=2|\nabla\phi|^2 \qty[ |\tilde{\vb Q}\vb n|^2 +\frac{2\alpha-s_+}{3} (\vb n\cdot\tilde{\vb Q}\vb n)].
\end{align*}
Setting $\alpha=\frac{s_+}{2}$ eliminates the term $\frac{2\alpha-s_+}{3} (\vb n\cdot\tilde{\vb Q}\vb n)$, which leads us to
\[(III)=2 \left|\qty(\vb Q+\frac{s_+}{3}\vb I)\nabla\phi\right|^2,\quad x\in\Gamma.\]
If $x\notin \Gamma$, then $\phi$ is a constant in a neighbourhood of $x$, so $\nabla\phi(x)\equiv 0$ and the third term equals zero. Therefore, we have
\[(III) \ge 0.\]

Then, substituting the above in \eqref{el-dot-Q}, and removing all positive terms on the RHS, we get the following inequality satisfied by the function $|\vb Q+\alpha\vb{n}\otimes\vb{n}|^2$ on the entire $\Omega$. This is known as a \textit{subsolution condition}.
\begin{equation} \label{rho-eq}
    -\triangle\qty(|\vb Q+\alpha\vb{n}\otimes\vb{n}|^2) + 2T(\vb Q) \le 0.
\end{equation}
The function $T(\vb Q)$ is defined to be
\begin{equation} \label{T-Q}
\begin{aligned}
    T(\vb Q)&=T^{(1)}(\vb Q)+\lambda^2 T^{(2)}(\vb Q), \\
    T^{(1)}(\vb Q)&=\alpha (\vb Q+\alpha\vb{n}\otimes\vb{n}):\triangle(\vb{n}\otimes\vb{n}) \\
    T^{(2)}(\vb Q)
    &=(\tr\vb Q^2)^2 - \frac{B}{C}\tr\vb Q^3 + \alpha (\tr\vb Q^2)(\vb n\cdot\vb{Qn}) 
    +\frac{3A+B\alpha}{3C} \tr\vb Q^2 -\frac{B}{C}\alpha|\vb{Qn}|^2 \\
    &\quad + \alpha\qty(\frac{A}{C}+\omega_v(1-\phi)^2)\vb n\cdot\vb{Qn}.
\end{aligned}
\end{equation}

Despite the complexity of its expression, $T(\vb Q)$ is basically a fourth order polynomial in terms of $\vb Q$, with uniformly bounded coefficients and a strictly positive leading term, $\lambda^2 (\tr\vb Q^2)^2$.
Therefore, there exists a number $R_0$ depending on $A,B,C,\lambda,\omega_v$ and the $C^2$-norm, $M_N$, from assumption (ii), such that
\begin{equation} \label{R-zero}
    T(\vb Q)\ge 0, \quad \forall |\vb Q|\ge R_0.
\end{equation}
Note that $\alpha=\frac{s_+}{2}$ is also determined by $A,B,C$ according to \eqref{s-plus}.

Finally, we use some standard techniques as in \cite[App. 2]{lamy_bifurcation_2014} for an example.
Define the cut-off 
\[\psi_k = \max\qty{ |\vb Q+\alpha\vb{n}\otimes\vb{n}|^2-k, 0},\]
such that $\psi_k\in H_0^1(\Omega)$ for $k>\alpha^2$ because $\vb Q|_{\pt\Omega}\equiv 0$ and that $|\alpha\vb{n}\otimes\vb{n}|^2\le \alpha^2$.
By the chain rule, we also have
\begin{equation} \label{grad-psi-k}
    \nabla\psi_k=\begin{cases}
        \nabla(|\vb Q+\alpha\vb{n}\otimes\vb{n}|^2), & |\vb Q+\alpha\vb{n}\otimes\vb{n}|^2>k, \\
        0, & |\vb Q+\alpha\vb{n}\otimes\vb{n}|^2\le k,
    \end{cases}
\end{equation}
Multiplying both sides of \eqref{rho-eq} with $\psi_k$, integrating by parts and eliminating positive terms, we get an inequality
\begin{equation} \label{subsol}
    \int_\Omega \nabla(|\vb Q+\alpha\vb{n}\otimes\vb{n}|^2)\cdot\nabla\psi_k\d x + 2\int_\Omega \psi_k T(\vb Q)\d x \le 0,\quad k>\alpha^2.
\end{equation}
By \eqref{grad-psi-k}, the first term of the LHS of \eqref{subsol} is simply $\int_\Omega |\nabla\psi_k|^2.$
The second term of \eqref{subsol} is supported on the region where $\psi_k\neq 0$, i.e. $|\vb Q+\alpha\vb{n}\otimes\vb{n}|^2>k$.
By the triangle inequality of the Frobenius norm,
\[|\vb Q| \ge |\vb Q+\alpha\vb{n}\otimes\vb{n}| - |\alpha\vb{n}\otimes\vb{n}| = |\vb Q+\alpha\vb{n}\otimes\vb{n}| - \alpha.\]
We choose $k>(R_0+\alpha)^2$ to ensure $|\vb Q|>R_0$ whenever $\psi_k$ is nonzero. Thus, according to \eqref{R-zero}, $T(\vb Q)$ is positive on the support of $\psi_k$, and 
\[\int_\Omega |\nabla\psi_k|^2\d x \le 0 \Rightarrow \psi_k\equiv 0,\]
i.e., $|\vb Q+\alpha\vb{n}\otimes\vb{n}|$ is uniformly bounded by $k$ for all $k>(R_0+\alpha)^2$. Therefore, using the triangle inequality on $\vb Q$ and $\alpha\vb{n}\otimes\vb{n}$ again, we get
\begin{equation} \label{pie}
    \sup_{x\in\Omega} |\vb Q|\le R_0+ 2\alpha,
\end{equation}
which is the uniform bound we desired.

Evidently, the bound \eqref{pie} depends on the choice of $R_0$, which ensures the positivity of $T(\vb Q)$ for all $|\vb Q|>R_0$. Hence, to complete the proof, we also need to study the dependence of $R_0$ \eqref{R-zero} on $\lambda$.
Closely observing the expressions of $T^{(1)}$ and $T^{(2)}$ \eqref{T-Q} and noting that their coefficients are controlled by a uniform bound dependent on $A,B,C,\omega_v,M_N$ but not on $\lambda$, we get
\begin{gather*}
    T^{(1)}(\vb Q) \ge - C_1 (|\vb Q|+1), \\
    T^{(2)}(\vb Q) \ge \frac12 |\vb Q|^4 - C_2, 
\end{gather*}
with constants $C_1,C_2>0$ depending on $A,B,C,\omega_v,M_N$. $T^{(1)}$ is affine in $\vb Q$ and $T^{(2)}$ is degree-4 in $\vb Q$ with leading term $(\tr \vb Q^2)^2$.
In order that $T(\vb Q)\ge 0$, it suffices to force the lower bound to be positive, i.e.
\[ T^{(1)}+\lambda^2 T^{(2)} \ge \lambda^2\qty(\frac12 |\vb Q|^4 - C_2) - C_1 (|\vb Q|+1)\ge 0.\]
Multiply both sides by $\lambda^{\frac23}$, and this reduces to checking that
\[\frac{1}{2}|\lambda^{\frac23} \vb Q|^4 - C_1 |\lambda^{\frac23}\vb Q| \ge C_1\lambda^{\frac23}+C_2\lambda^{\frac83}.\]
The RHS can be bounded in terms of $C_1,C_2,\lambda_0$ for all $\lambda < \lambda_0$.
The LHS is a degree-4 polynomial of $|\lambda^{\frac23}\vb Q|$ as a whole. By the elementary property of polynomial functions, there exists $C_3>0$ depending on $C_1,C_2,\lambda_0$,
such that the inequality holds whenever $|\lambda^{\frac23}\vb Q|>C_3$ i.e. $|\vb Q|>C_3 \lambda^{-\frac23}$.
Therefore, we take $R_0= C_3\lambda^{-\frac23}$ in the uniform bound \eqref{pie} and get 
\[\sup_{x\in\Omega} |\vb Q|\le C_3 \lambda^{-\frac23}+ 2\alpha \le M \lambda^{-\frac23}, 
\quad \forall \lambda<\lambda_0\]
with $M=C_3+ 2\alpha\lambda_0^{\frac23}$ dependent on $A,B,C,L,\omega_v,\lambda_0,M_N$ as desired.
\end{proof}

\begin{remark} \label{pg:ne2}
We construct a typical example of $\vb n_\ve$.
Take $h(x)$, the signed distance function \cite{modica_gradient_1987} to a smooth region $D$ representing the droplet, such that the level set $\{h=0\}$ equals $\pt D$, that $|\nabla h|=1$ a.e., and that $h(x)$ is smooth in a neighbourhood of $\pt D$ as long as $\pt D$ is smooth \cite{gilbarg_elliptic_1977}. 
We let $\phi_\ve$ be an approximation to the $\tanh$ function, also known as the standing wave \cite{evans_phase_1992}:
\[\phi_\ve(x)=
\begin{cases}
    0, & h<-2\sqrt{\ve}, \\
    \text{linear connection}, & -2\sqrt{\ve}\le h\le -\sqrt{\ve}, \\
    \frac12\left( 1-\tanh\frac{h(x)}{\sqrt{2}\ve} \right) , & |h|<\sqrt{\ve}, \\
    \text{linear connection}, & \sqrt{\ve}\le h\le 2\sqrt{\ve}, \\
    1, & h>2\sqrt{\ve}.
\end{cases}\]
Such a sequence is known to approximate minimizers of the Van der Waals-Cahn-Hilliard energy (our mixing energy).
We can take $\vb n_\ve=-\zeta(x)\nabla h,$ where $\zeta(x)\ge 0$ is a smooth cutoff function that equals 1 near $\pt D$ and equals zero outside of a fixed neighbourhood of $\pt D$, within which $h$ is smooth. Since the definition of $\vb n_\ve$ is not related to $\ve$, its $C^2$ norm is obviously uniform with respect to $\ve$.
By definition, $\nabla\phi_\ve$ is supported on the region $\{|h|\le 2\sqrt{\ve}\}$, an area of size $O(\sqrt{\ve})$ near $\pt D$, so the support of $\nabla\phi_\ve$ is contained inside the region where $\zeta\equiv 1$ when $\ve\to 0$.
Since $\phi_\ve$ is a decreasing function of $h$, $\nabla\phi_\ve$ is parallel to $-\nabla h$.
Therefore, when $\ve\to 0$, the desired relation $\nabla\phi_\ve=\frac{|\nabla\phi_\ve|}{|\nabla h|}\cdot(-\nabla h)=|\nabla\phi_\ve| \vb n_\ve$ holds within the support of $\nabla\phi_\ve$.
\end{remark}

\subsection{Uniqueness on small domains}
An immediate consequence of the maximum principle is the uniqueness of the critical points when $\lambda$ is sufficiently small.
\begin{proposition} \label{min-unique}
Under the assumptions of Theorem \ref{Q-uni-bound}, there exists a $\lambda_1>0$ depending on $\Omega,A,B,C,L,\omega_v$ and the bound $M_N$,
such that for all $\lambda<\lambda_1$, the $\vb Q$-critical point of $E_{\ve}[\cdot,\phi]$ \eqref{ldg-phf-nondim} is unique.
\end{proposition}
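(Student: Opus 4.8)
The plan is to argue by an energy estimate on the difference of two critical points, exploiting the uniform $L^\infty$ bound from Theorem~\ref{Q-uni-bound} to tame the cubic bulk nonlinearity. Suppose $\vb Q_1$ and $\vb Q_2$ are both $\vb Q$-critical points of $E_\ve[\cdot,\phi]$, so each solves the weak form of \eqref{ldgphf-el-Q} with the \emph{same} $\phi$. By Lemma~\ref{reg-coarse} both lie in $C^{1,\alpha}\cap W^{2,p}$, and by Theorem~\ref{Q-uni-bound} both satisfy $\sup_\Omega|\vb Q_i|\le M\lambda^{-2/3}$ with $M$ independent of $\ve$. Set $\vb R=\vb Q_1-\vb Q_2\in H_0^1(\Omega;\mathcal S_0)$. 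Subtracting the two equations and testing with $\vb R$ (integrating the Laplacian by parts, which is legitimate by the regularity just quoted) yields an identity of the schematic form
\[
\int_\Omega|\grad\vb R|^2 + \omega_v\lambda^2\int_\Omega(1-\phi)^2|\vb R|^2 + 2\omega_a\lambda\ve\int_\Omega\left|\vb R\nabla\phi\right|^2 = -\lambda^2\int_\Omega\big[\Phi(\vb Q_1)-\Phi(\vb Q_2)\big]:\vb R,
\]
where $\Phi(\vb Q)=(\frac{A}{C}+\tr\vb Q^2)\vb Q-\frac{B}{C}\mathscr{P}_0(\vb Q^2)$ is the bulk nonlinearity.

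The crucial structural observation is that the two penalty terms land on the \emph{favourable} side. The void contribution is manifestly nonnegative. For the anchoring term, because $\vb R$ is symmetric and traceless the projection $\mathscr{P}_0$ is transparent when contracted with $\vb R$, and a short computation using $(\nabla\phi\otimes\nabla\phi)\vb R:\vb R=|\vb R\nabla\phi|^2$ shows the anchoring difference contracts to $2|\vb R\nabla\phi|^2\ge 0$. Thus the anchoring penalty—the very term that caused the difficulties in Theorem~\ref{Q-uni-bound}—is monotone here and may simply be discarded, leaving $\int_\Omega|\grad\vb R|^2$ bounded above by the right-hand side.

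It remains to control the bulk term. Splitting $\Phi=\frac{A}{C}\,\mathrm{Id}+\Phi_{\mathrm{nl}}$, the linear part contributes $\lambda^2\frac{|A|}{C}\int_\Omega|\vb R|^2$ (recall $A<0$), while $\Phi_{\mathrm{nl}}$, being quadratic-plus-cubic in $\vb Q$, is Lipschitz on the relevant range with constant at most $c(A,B,C)\,(1+|\vb Q_1|^2+|\vb Q_2|^2)$. Inserting the uniform bound $|\vb Q_i|\le M\lambda^{-2/3}$ yields a Lipschitz constant of order $\lambda^{-4/3}$, so after multiplication by the prefactor $\lambda^2$ the whole right-hand side is bounded by $C(\lambda)\int_\Omega|\vb R|^2$ with $C(\lambda)=O(\lambda^2+\lambda^{2/3})$. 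Applying the Poincaré inequality on $\Omega$, namely $\int_\Omega|\vb R|^2\le C_P\int_\Omega|\grad\vb R|^2$, I arrive at $\int_\Omega|\grad\vb R|^2\le C_P\,C(\lambda)\int_\Omega|\grad\vb R|^2$. Since $C(\lambda)\to0$ as $\lambda\to0$, I may choose $\lambda_1>0$ so small that $C_P\,C(\lambda)<1$ for all $\lambda<\lambda_1$; this forces $\grad\vb R\equiv0$ and hence $\vb R\equiv0$ because $\vb R\in H_0^1$, which is the asserted uniqueness. The dependence of $\lambda_1$ claimed in the statement then enters through $C_P$ (hence $\Omega$) and through $M$ and $c$ (hence $A,B,C,L,\omega_v,M_N$).

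I expect the main obstacle to be bookkeeping the powers of $\lambda$: the cubic term forces a factor $\lambda^{-4/3}$ through the a priori bound, and it is only because the Euler–Lagrange equation carries the compensating prefactor $\lambda^2$ that the net coefficient $\lambda^{2/3}$ decays to zero. Verifying the favourable sign of the anchoring difference (the symmetric-traceless contraction identity giving $2|\vb R\nabla\phi|^2$) is the one genuinely model-specific computation; once it is in hand, the remainder is the standard strict-convexity/contraction estimate in the spirit of \cite{lamy_bifurcation_2014}.
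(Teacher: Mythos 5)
Your proof is correct, and it takes a route that differs in implementation from the paper's while resting on the same three pillars: the uniform bound $\sup_\Omega|\vb Q_i|\le M\lambda^{-2/3}$ from Theorem \ref{Q-uni-bound} applied to both critical points, the favourable sign of the two penalty terms, and Poincar\'e absorption of a bulk coefficient that vanishes as $\lambda\to 0$. The paper proves \emph{strict convexity of the functional} $E_\ve[\cdot,\phi]$ on the convex set $X_\lambda=\{\vb Q:\sup_\Omega|\vb Q|\le M\lambda^{-2/3}\}$ (which contains all critical points) by bounding the central second difference $E^{\rm LdG}[\vb Q+\vb h]+E^{\rm LdG}[\vb Q-\vb h]-2E^{\rm LdG}[\vb Q]$ from below, and then invokes the abstract uniqueness lemma of \cite{lamy_bifurcation_2014}; you instead prove \emph{strict monotonicity of the Euler--Lagrange operator} by subtracting the two equations and testing with $\vb R=\vb Q_1-\vb Q_2$. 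These are two faces of the same convexity mechanism, but your version works directly at the PDE level and needs no abstract lemma, at the cost of handling the nonlinearity through its gradient rather than through its second difference. Your model-specific computation --- that the anchoring difference contracts to $2|\vb R\nabla\phi|^2\ge 0$ because $\vb R$ is symmetric and traceless, so $\mathscr{P}_0$ drops out and $(\nabla\phi\otimes\nabla\phi)\vb R:\vb R=|\vb R\nabla\phi|^2$ --- is correct, and is the exact analogue of the paper's observation that the anchoring energy is a positive-semidefinite quadratic form in $\vb Q$, hence convex. The one quantitative loss in your approach: the crude Lipschitz bound on $\Phi_{\mathrm{nl}}$ costs $O(R^2)=O(M^2\lambda^{-4/3})$ (coming from the quartic term), so your net coefficient is $O(\lambda^{2/3})$, whereas in the paper's second difference the quartic term contributes only nonnegative terms and the only indefinite contribution, $-\frac{2B}{C}\tr(\vb Q\vb h^2)$, is \emph{linear} in $\vb Q$, giving the sharper coefficient $M'\lambda^{4/3}$. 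This affects only the size of $\lambda_1$, not correctness; you could recover the sharper rate within your framework by noting that $\vb Q\mapsto\frac14(\tr\vb Q^2)^2$ is convex, so its (monotone) gradient difference can be discarded together with the penalties, leaving only the cubic and linear parts to absorb. Finally, a small bookkeeping point: you should also require $\lambda_1\le\lambda_0$ (the paper fixes $\lambda_0=1$) so that Theorem \ref{Q-uni-bound} supplies a single constant $M$ valid for all $\lambda<\lambda_1$, exactly as in the paper's choice $\lambda_1=\min\{\lambda_0,(C_4 M')^{-\frac34}\}$.
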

\begin{proof}
Following the approach of Lemma 8.2 from \cite[App. 2]{lamy_bifurcation_2014}, we only need to prove the strict convexity of the energy functional $E_{\ve}$ on the closed convex set
\[X_\lambda = \left\{\vb Q\in H_0^1(\Omega;\mathcal{S}_0): \sup_{x\in\Omega} |\vb Q| \le M\lambda^{-\frac23}\right\},\]
where $M=M(A,B,C,L,\omega_v,\lambda_0,M_N)$ is the constant stated in Theorem \ref{Q-uni-bound}, with $\lambda_0=1$. 
By definition, $X_\lambda$ contains all critical points of the energy. We highlight the dependence on $\lambda$ since the bound changes as $\lambda\to 0$.
It suffices to check that
\[ \frac{E_{\ve}[\vb Q+\vb h,\phi] + E_{\ve}[\vb Q-\vb h,\phi]}{2} \ge E_{\ve}\qty[\vb Q,\phi] \]
for any $\vb Q\in X_\lambda$ and $\vb h\in H_0^1(\Omega;\mathcal{S}_0)$, and that the strict inequality holds when $\vb h\neq\vb 0$.
Since the void energy and the anchoring energy are positive-definite quadratic polynomials with respect to $\vb Q$, they are both convex, and one needs to check the LdG energy alone:
\[ E^{\rm LdG}[\vb Q]=\int_{\Omega} \qty[\frac12 |\nabla\vb Q|^2 + \lambda^2\bar F_b(\vb Q)]\d x.\] The mixing energy is fixed by the fixed choice of $\phi$, for $\vb Q$-critical points of $E_{\ve}[\cdot,\phi]$.

Then, strong convexity of $E^{\rm LdG}$ is equivalent to the positivity of the second-order central difference
\[ \Delta(\vb Q,\vb h)=E^{\rm LdG}[\vb Q+\vb h] +  E^{\rm LdG}[\vb Q-\vb h]-2  E^{\rm LdG}[\vb Q] \]
for all $\vb Q\in X_\lambda$ and $\vb h \in H_0^1(\Omega;\mathcal{S}_0)\setminus\{\vb 0\}$. From the expression of $E^{\rm LdG}$, we compute
\[\Delta(\vb Q,\vb h)=\int_{\Omega} |\grad\vb h|^2
    +\lambda^2\int_{\Omega} \qty[\frac{A}{C} \tr\vb h^2
    -\frac{2B}{C}\tr(\vb {Qh}^2) + 2(\tr\vb{Qh})^2+\tr\vb Q^2 \tr\vb h^2+\frac12(\tr\vb h^2)^2].
\]
Since $|\vb Q|\le M\lambda^{-\frac23}$, we estimate a lower bound of $\Delta(\vb Q,\vb h)$ by removing all positive terms and amplifying $\vb Q$ to its largest norm. We get
\begin{align*}
    \Delta(\vb Q,\vb h) &\ge \int_{\Omega}|\grad\vb h|^2\d x - \lambda^2\int_{\Omega} \qty[\frac{|A|}{C} \tr\vb h^2 + \frac{2B}{C}\cdot M\lambda^{-\frac23} \tr \vb h^2]\d x \\
    &\ge \int_{\Omega}|\grad\vb h|^2 \d x- M' \lambda^{\frac43}  \int_{\Omega} |\vb h|^2\d x,
\end{align*}
where the constant $M'$ is scaled from $M$ by a constant related to $\frac{|A|}{C},\frac{B}{C}$. 
We note that the factor $\lambda^2$ absorbs the unbounded $\lambda^{-\frac23}$ (as $\lambda \to 0$) from the uniform maximum value introduced in Theorem \ref{Q-uni-bound}.

Poincar\'e's inequality \cite{gilbarg_elliptic_1977} implies that
\[\int_{\Omega}|\grad\vb h|^2 \d x \ge \frac{1}{C_4(\Omega)} \int_{\Omega}|\vb h|^2\d x,\]
so there exists a $\lambda_1=\min\{\lambda_0,(C_4 M')^{-\frac34}\}$ such that when $\lambda<\lambda_1$, the quantity $\Delta(\vb Q,\vb h)$ is always strictly positive for all $\vb h\neq\vb 0$.
Therefore, $\bar E^{LdG}$ is strictly convex on $X_\lambda$, and the uniqueness of $\vb Q$-critical points follows naturally.
\end{proof}

\section{Sharp-interface limit} \label{sec-sil}
The limiting behaviour of $E_\ve[\vb Q,\phi]$ \eqref{ldg-phf-nondim} in the $\ve\to 0$ limit, is known as the \textit{sharp-interface limit}.
A famous result of the sharp-interface limit concerns the Van der Waals-Cahn-Hilliard functional (as used in our mixing energy \eqref{E-per}), which states that it converges to the interface perimeter, as a $\Gamma$-limit in the $\ve \to 0$ limit \cite{modica_gradient_1987,sternberg_effect_1988}.
We are interested in the sharp-interface limit 
 because it guarantees that solving the continuous diffuse-interface problem (e.g. the Van der Waals-Cahn-Hilliard functional) approximates the difficult-to-compute sharp-interface problems (e.g. interface perimeter).

Our main result is:
\begin{theorem} \label{sharp-intf-limit}\label{pg:thm1}
Let $(\vb Q_\ve^*,\phi_\ve^*)\in \mathscr{A}$ be a minimizer of $E_\ve$ \eqref{ldg-phf-nondim} in its admissible space \eqref{admissible-space}. We make the following assumptions as $\ve\to 0$.
\begin{enumerate}[\rm(i)]  
    \item We assume that $\{\vb Q_\ve^*\}$ is a sequence of uniformly bounded continuous functions that $H^1$-weakly converge to $\vb Q_0\in C^0\cap H_0^1$.
    \item We assume that $\{\phi_\ve^*\}$ converges to the indicator function $I_{D_0}=\begin{cases}
        1, & x\in D_0, \\ 0 & x\notin D_0
    \end{cases}$ in $L^1$, where $D_0\subset\subset\Omega$ (abbreviation for $D_0\subset\Omega$ and $\pt D_0\subset\Omega$) is a smooth region such that $|D_0|=V_0$;
\end{enumerate}
Then, $(\vb Q_0, D_0)$ minimizes the energy functional
\begin{equation} \label{ldgphf-si}
    \begin{aligned}
    E_0[\vb Q,D]
    &=\int_\Omega \qty[ \frac{L}{2} |\grad\vb{Q}|^2 + \lambda^2 \bar F_b(\vb Q) ] \d x
    +\omega_v\lambda^2 \int_{D^c} \frac12 |\vb Q|^2 \d x \\
    &\quad
    +\frac{\omega_p\lambda}{3} \int_{\pt D} \qty[1+\frac{\omega_a}{\omega_p}\left| \qty(\vb Q+\frac{s_+}{3}\vb I)\bm\nu \right|^2 ]^{\frac12}\d S(x)
    \end{aligned}
\end{equation}
amongst all tensor functions $\vb Q\in C^0\cap H_0^1$ and regions $D\subset\subset \Omega$ with smooth boundary satisfying $|D|=V_0$, where $\bm\nu$ represents the unit normal vector of $\pt D$.
\end{theorem}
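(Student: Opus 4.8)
The plan is to prove the two inequalities of De Giorgi's $\Gamma$-convergence for $E_\ve$ and then combine them with the minimality of $(\vb Q^*_\ve,\phi^*_\ve)$. The algebraic heart is to recognize that the mixing and anchoring densities merge: writing $\tilde{\vb Q}=\vb Q+\frac{s_+}{3}\vb I$, $W(\phi)=\phi^2(1-\phi)^2$, and the symmetric positive-definite field $\vb M=\vb I+\frac{\omega_a}{\omega_p}\tilde{\vb Q}^2$, their sum is $\omega_p\lambda\,[\ve\,(\nabla\phi)^{\mathsf T}\vb M\,\nabla\phi+\ve^{-1}W(\phi)]$, an anisotropic Modica--Mortola functional. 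Since $\int_0^1\sqrt{W}\,\d s=\tfrac16$ and the effective surface tension in direction $\bm\nu$ is $\sqrt{\bm\nu^{\mathsf T}\vb M\bm\nu}=\sqrt{1+\frac{\omega_a}{\omega_p}|\tilde{\vb Q}\bm\nu|^2}$, the associated sharp-interface energy is exactly the boundary term $\frac{\omega_p\lambda}{3}\int_{\pt D}\sqrt{1+\frac{\omega_a}{\omega_p}|\tilde{\vb Q}\bm\nu|^2}\,\d S$ of $E_0$. I would dispatch the LdG and void terms by soft arguments and concentrate the work on this interfacial term.

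\emph{Lower bound.} Along the minimizers, $\vb Q^*_\ve\rightharpoonup\vb Q_0$ in $H^1$ with a uniform $L^\infty$ bound, hence $\vb Q^*_\ve\to\vb Q_0$ strongly in every $L^p$ and a.e.; weak lower semicontinuity of the Dirichlet term and dominated convergence of the polynomial bulk term give $\liminf E^{\rm LdG}\ge E^{\rm LdG}[\vb Q_0]$, while Fatou applied to $\tfrac12(1-\phi^*_\ve)^2|\vb Q^*_\ve|^2\to\tfrac12 I_{D_0^c}|\vb Q_0|^2$ a.e. gives the void term. For the interfacial term I would use Young's inequality to bound the density below by $2\omega_p\lambda\sqrt{W(\phi^*_\ve)}\sqrt{(\nabla\phi^*_\ve)^{\mathsf T}\vb M_\ve\,\nabla\phi^*_\ve}$ with $\vb M_\ve=\vb I+\frac{\omega_a}{\omega_p}(\tilde{\vb Q}^*_\ve)^2$, rewrite $\sqrt{W(\phi^*_\ve)}\,\nabla\phi^*_\ve=\nabla(\Phi\circ\phi^*_\ve)$ where $\Phi(t)=\int_0^t\sqrt{W}$, and note that $\Phi\circ\phi^*_\ve\to\Phi(1)I_{D_0}$ in $L^1$ with uniformly bounded total variation, so these vector measures converge weakly-$*$ to the surface measure $\Phi(1)\,\bm\nu\,\mathcal H^2$ on $\pt D_0$. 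After replacing $\vb M_\ve$ by its limit $\vb M_0=\vb I+\frac{\omega_a}{\omega_p}\tilde{\vb Q}_0^2$, Reshetnyak lower semicontinuity for the convex, positively $1$-homogeneous integrand $p\mapsto\sqrt{p^{\mathsf T}\vb M_0(x)p}$ (continuous in $x$ because $\vb Q_0\in C^0$) delivers the boundary integral with the correct weight and the constant $2\cdot\tfrac16=\tfrac13$.

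The \textbf{main obstacle} is the coupling between $\vb Q^*_\ve$ and the concentrating gradient, i.e. justifying the replacement $\vb M_\ve\rightsquigarrow\vb M_0$ inside the weight. The error $\ve\int|(\tilde{\vb Q}^*_\ve)^2-\tilde{\vb Q}_0^2|\,|\nabla\phi^*_\ve|^2$ is integrated against $\ve|\nabla\phi^*_\ve|^2\d x$, a sequence bounded only in $L^1$ that concentrates on the $O(\ve)$-thick interface, so plain $L^p$ convergence of $\vb Q^*_\ve$ does not control it. I would close this by upgrading to a uniform modulus of continuity for the minimizers $\vb Q^*_\ve$ on a fixed neighbourhood of $\pt D_0$ (so that $\vb Q^*_\ve\to\vb Q_0$ uniformly there), exploiting the $L^\infty$ bound of Theorem \ref{Q-uni-bound} and that the concentrating source in \eqref{ldgphf-el-Q} is a bounded measure; alternatively one localizes on small balls covering $\pt D_0$ on which $\vb Q_0$ is nearly constant and estimates the interfacial energy ball-by-ball. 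This is where I expect the bulk of the technical effort to lie.

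\emph{Recovery sequence and conclusion.} For an arbitrary admissible competitor $(\vb Q,D)$ I would set $\vb Q_\ve\equiv\vb Q$ and build $\phi_\ve$ from the optimal one-dimensional profile as in Remark \ref{pg:ne2}: with $h$ the signed distance to $\pt D$ and $\gamma_0$ the standing wave satisfying $\int(|\gamma_0'|^2+W(\gamma_0))=2\Phi(1)$, take $\phi_\ve(x)=\gamma_0\big(h(x)/(\ve\sqrt{a(x)})\big)$ (cut off away from $\pt D$), where $a(x)=1+\frac{\omega_a}{\omega_p}|\tilde{\vb Q}\bm\nu|^2$; the $\sqrt a$ rescaling of the width makes the normal profile optimal for the anisotropic weight, so the coarea formula gives $\limsup(\omega_p E^{\rm mix}_\ve+\omega_a E^{\rm anch}_\ve)\le\frac{\omega_p\lambda}{3}\int_{\pt D}\sqrt a\,\d S$. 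The LdG term is unchanged, the void term converges by dominated convergence since $\phi_\ve\to I_D$ boundedly, and a small $O(\ve)$ normal translation of the profile restores the exact constraint $\int_\Omega\phi_\ve=V_0$ at asymptotically negligible energy cost. Combining, the lower bound gives $E_0[\vb Q_0,D_0]\le\liminf E_\ve[\vb Q^*_\ve,\phi^*_\ve]$, minimality gives $E_\ve[\vb Q^*_\ve,\phi^*_\ve]\le E_\ve[\vb Q_\ve,\phi_\ve]$, and the recovery bound gives $\limsup E_\ve[\vb Q_\ve,\phi_\ve]\le E_0[\vb Q,D]$; chaining these yields $E_0[\vb Q_0,D_0]\le E_0[\vb Q,D]$ for every competitor, i.e. $(\vb Q_0,D_0)$ minimizes $E_0$.
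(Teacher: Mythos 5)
Your overall architecture ($\Gamma$-liminf plus recovery sequence, chained with minimality) matches the paper's, and your soft treatment of the LdG and void terms is fine; but the lower bound has a genuine gap, and it is exactly the point you flag as the main obstacle: the replacement of $\vb M_\ve=\vb I+\frac{\omega_a}{\omega_p}(\tilde{\vb Q}^*_\ve)^2$ by $\vb M_0$ inside the concentrating interfacial term. Neither of your proposed remedies closes it. The hypotheses give only a uniform $L^\infty$ bound and continuity of each individual $\vb Q^*_\ve$ --- not equicontinuity --- so nothing prevents $\vb Q^*_\ve$ from oscillating at scale $\ve$ precisely on the set where $\ve|\nabla\phi^*_\ve|^2$ concentrates. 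Your first remedy (a uniform modulus of continuity near $\pt D_0$ from elliptic theory) does not follow from what you invoke: the anchoring source in \eqref{ldgphf-el-Q} is bounded only in $L^1$, and in three dimensions $\triangle u=\mu$ with $\mu$ a bounded measure yields only $W^{1,p}$, $p<3/2$, which does not embed into $C^0$; to get uniform H\"older bounds you would need to know a priori that the source is uniformly comparable to a codimension-one measure with bounded density, which is essentially the structural information about minimizers that you are trying to establish --- a circular argument. Your second remedy (localization on balls where $\vb Q_0$ is nearly constant) does not help by itself, because on each ball $\vb Q^*_\ve\to\vb Q_0$ still only a.e.\ and in $L^p$, and an Egorov-type exceptional set can carry a nonvanishing fraction of the interfacial measure. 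So, as written, Reshetnyak semicontinuity is applied to an integrand you have not legitimately reached.

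The paper closes precisely this point with a duality device that your proposal is missing (Lemma \ref{sublin-form-prop} together with \eqref{l1-h1w-then-rmws}). Rather than replacing the weight pointwise, it linearizes the anisotropic total variation: $\sqrt{a_{\vb Q_\ve}(\nabla\Phi(\phi_\ve))}$ is realized as a supremum, over pairs $\bm g_1,\bm g_2\in C_0(\Omega)$ with $|\bm g_1|^2+|\bm g_2|^2\le 1$, of functionals that are \emph{linear} in the pair of measures $\bm\mu_1=\nabla\Phi(\phi_\ve)$ and $\bm\mu_2=\sqrt{\omega_a/\omega_p}\,(\vb Q_\ve+\frac{s_+}{3}\vb I)\nabla\Phi(\phi_\ve)$; weak-$*$ lower semicontinuity is then automatic (a supremum of weak-$*$ continuous functionals), and the entire nonlinear coupling reduces to identifying the weak-$*$ limit of $\bm\mu_2$ as $(\vb Q_0+\frac{s_+}{3}\vb I)\D\Phi(I_{D_0})$. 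That identification is done by integration by parts: $\int_\Omega \psi\, \vb Q_\ve\,\pt_{x_i}\Phi(\phi_\ve)\d x=-\int_\Omega \big[\vb Q_\ve\Phi(\phi_\ve)\pt_{x_i}\psi+\psi\,\Phi(\phi_\ve)\pt_{x_i}\vb Q_\ve\big]\d x$, where the first term passes to the limit by dominated convergence (uniform bound plus a.e.\ convergence) and the second pairs the weak $L^2$ convergence of $\nabla\vb Q_\ve$ against the strong $L^2$ convergence of $\psi\,\Phi(\phi_\ve)$ --- no equicontinuity is ever needed. If you wish to keep your Reshetnyak formulation, you would need this same trick anyway, at which point the Reshetnyak theorem becomes superfluous. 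A secondary, more fixable issue: your recovery profile $\gamma_0\big(h/(\ve\sqrt{a})\big)$ requires differentiating $a(x)=1+\frac{\omega_a}{\omega_p}|\tilde{\vb Q}\bm\nu|^2$, but the competitor $\vb Q$ is only $C^0\cap H_0^1$, so $\nabla a$ need not exist, let alone be bounded; the paper circumvents this by constructing the generalized eikonal distance for smooth $\vb A_k\succeq\vb A$ approximating $\vb A$ uniformly and passing to a diagonal sequence (Step 4 of the proof of Lemma \ref{gamma-limsup}).
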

\begin{remark}
The sharp-interface limit \eqref{ldgphf-si} also comprises the LdG free energy and penalty factors on the void, the perimeter and the weak anchoring on $\pt D$. 
The first integral is identical to $E^{\rm LdG}[\vb Q]$, and the second is identical to $E^{\rm void}[\vb Q,I_{D}]$ \eqref{E-void} where $I_D$ is the indicator function of $D$.
Using the elementary inequality $\frac{1+|x|}{\sqrt 2} \le \sqrt{1+|x|^2} \le 1+ |x|,$ we find that
\[1\le\frac{
    \displaystyle 
    \mcalP(D) + \sqrt{\frac{\omega_a}{\omega_p}}\int_{\pt D} \left|\qty(\vb Q+\frac{s_+}{3}\vb I)\bm\nu \right|
}{
    \displaystyle
    \int_{\pt D} \qty[1+\frac{\omega_a}{\omega_p}\left|\qty(\vb Q+\frac{s_+}{3}\vb I)\bm\nu \right|^2 ]^{\frac12}\d S(x)
} \le \sqrt{2}.\]
Thus, the boundary integral is the sum of the perimeter and an $L^1$ anchoring penalty.
\end{remark}

\subsection{Proof by \texorpdfstring{$\Gamma$}{Γ}-convergence}

Theorem \ref{sharp-intf-limit} is proven in the framework of $\Gamma$-convergence, which is composed of an upper bound estimate and a lower bound estimate \cite{de_giorgi_convergence_1978}.

We re-write the diffuse-interface functional \eqref{ldg-phf-nondim} as follows:
\begin{equation} \label{ldgphf-alt}
    E_\ve[\vb Q,\phi]= E^{\rm LdG}[\vb Q] + \omega_v\lambda^2 E^{\rm void}[\vb Q,\phi] + \omega_p\lambda \int_\Omega \qty[\ve a_{\vb Q}(\nabla\phi) + \ve^{-1}W(\phi)]\d x
\end{equation}
where $W(s)=s^2(1-s)^2$ is the double-well function,
and the sharp-interface functional \eqref{ldgphf-si} as
\begin{equation} \label{ldgphfsialt}
    E_0[\vb Q,D]=E^{\rm LdG}[\vb Q]+\omega_v\lambda^2 E^{\rm void}[\vb Q,I_D] + \frac{\omega_p\lambda}{3}\int_{\pt D} |a_{\vb Q}(\bm\nu)|^{\frac12}\d S(x).
\end{equation}
Note that the expressions \eqref{ldgphf-alt} and \eqref{ldgphfsialt} share the LdG and void energy, as well as a quadratic form $a_{\vb Q}(\cdot):\BbbR^3\to\BbbR$ defined to be 
\begin{equation} \label{a-Q-xi}
    a_{\vb Q}(\xi) = |\xi|^2 + \frac{\omega_a}{\omega_p}\left|\qty(\vb Q+\frac{s_+}{3}\vb I)\xi\right|^2,\quad \xi\in\BbbR^3,
\end{equation}
suggesting the required connection between $E_\ve$ and $E_0$.

We first present a compactness result.
\begin{proposition} \label{ldgphf-comp} \label{pg:comp}

Work with the same assumptions as in Theorem \ref{sharp-intf-limit}. Then, for any sequence $\{(\vb Q_\ve,\phi_\ve)\}$:
\begin{enumerate}[\rm(a)]
    \item If $E^{\rm LdG}[\vb Q_\ve]$ is bounded for all $\ve$, then $\{\vb Q_\ve\}$ is weak precompact in $H_0^1$.
    \item If $E^{\rm mix}_\ve[\phi_\ve]$ is bounded for all $\ve$, then $\{\phi_\ve\}$ is precompact in $L^1$, and any cluster point of $\{\phi_\ve\}$ takes binary values $0$ and $1$.
    \item Denote by
    \begin{equation} \label{int-sqrt-W}
        \Phi(s)=\int_0^s\sqrt{W(s)}\d s.
    \end{equation}
    If $E^{\rm mix}_\ve[\phi_\ve]$ and $E^{\rm anch}_\ve[\vb Q_\ve,\phi_\ve]$ are bounded for all $\ve$, then $\nabla\Phi(\phi_\ve)\d x$ and $\vb Q_\ve\nabla\Phi(\phi_\ve)\d x$ are weak-$*$ precompact as Radon measures on $\Omega$.
    The weak-$*$ precompactness of a Radon measure sequence $\{\mu_n\}$ on $\Omega$ means that there exists a subsequence $\{\mu_{n_k}\}$ and another measure $\mu$, such that $\mu_{n_k}\overset{*}{\rightharpoonup}\mu$, i.e. for any function $f\in C_0^\infty(\Omega)$,
    \[\int_\Omega f(x)\d\mu_{n_k}(x) \to \int_\Omega f(x)\d\mu(x).\]
\end{enumerate}
Especially, if $E_\ve[\vb Q_\ve,\phi_\ve]$ is uniformly bounded for all $\ve$, then all of the above holds.
\end{proposition}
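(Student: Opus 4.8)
The statement packages three logically independent compactness claims (a)--(c) together with a final clause reducing their hypotheses to a single uniform energy bound, so the plan is to dispatch the final clause first and then prove (a)--(c) in turn. For the final clause: the terms $E^{\rm void}$, $E^{\rm mix}_\ve$ and $E^{\rm anch}_\ve$ are manifestly nonnegative, while $E^{\rm LdG}[\vb Q]\ge \frac{L}{2}\|\grad\vb Q\|_{L^2}^2+\lambda^2 m_b|\Omega|$ is bounded below by \eqref{bulk-min}. Hence a uniform bound $E_\ve\le C$ forces $E^{\rm LdG}[\vb Q_\ve]\le C$, and then $\omega_v\lambda^2 E^{\rm void}+\omega_p\lambda E^{\rm mix}_\ve+\omega_a\lambda E^{\rm anch}_\ve=E_\ve-E^{\rm LdG}$ is bounded, so each nonnegative summand is individually bounded and the hypotheses of (a)--(c) all hold. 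Part (a) is then routine: boundedness of $E^{\rm LdG}[\vb Q_\ve]$ together with $\bar F_b\ge m_b$ yields a uniform bound on $\|\grad\vb Q_\ve\|_{L^2}$, Poincar\'e's inequality on $H_0^1(\Omega;\mathcal{S}_0)$ upgrades this to a uniform $H_0^1$ bound, and weak precompactness follows from reflexivity and Banach--Alaoglu.

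Part (b) is the Modica--Mortola compactness argument built around the primitive $\Phi$ of \eqref{int-sqrt-W}. The arithmetic--geometric inequality gives the pointwise bound $2\sqrt{W(\phi_\ve)}\,|\nabla\phi_\ve|\le \ve|\nabla\phi_\ve|^2+\ve^{-1}W(\phi_\ve)$, so $|\nabla\Phi(\phi_\ve)|=\sqrt{W(\phi_\ve)}\,|\nabla\phi_\ve|$ has $L^1$-norm at most $\tfrac12 E^{\rm mix}_\ve\le C$. Since $\Phi(\phi_\ve)$ vanishes on $\pt\Omega$, the Poincar\'e inequality in $W_0^{1,1}$ bounds $\Phi(\phi_\ve)$ in $W^{1,1}(\Omega)\subset BV(\Omega)$, and the compact embedding $BV(\Omega)\hookrightarrow L^1(\Omega)$ (valid on the Lipschitz domain $\Omega$) extracts a subsequence with $\Phi(\phi_\ve)\to g$ in $L^1$. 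As $\sqrt{W}=|s(1-s)|$ vanishes only at $0,1$, the map $\Phi$ is a continuous strictly increasing bijection of $\BbbR$; inverting along an a.e.\ convergent subsequence gives $\phi_\ve\to\Phi^{-1}(g)$ pointwise, which I would promote to $L^1$ by a Vitali/uniform-integrability argument, using that $\int_\Omega W(\phi_\ve)\le\ve E^{\rm mix}_\ve\to 0$ controls the contribution of large values. Finally the same bound $\int_\Omega W(\phi_\ve)\to 0$ forces, via Fatou, $W(\Phi^{-1}(g))=0$ a.e., so every cluster point takes values in $\{0,1\}$.

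For part (c), the measure $\nabla\Phi(\phi_\ve)\d x$ inherits bounded total variation from the $L^1$ bound just established, hence is weak-$*$ precompact in $(C_0(\Omega))^*$ by Banach--Alaoglu. The delicate point, and the main obstacle, is bounding the total mass of $\vb Q_\ve\nabla\Phi(\phi_\ve)\d x$, since $\vb Q_\ve$ carries no pointwise bound under these hypotheses alone. I would split $\vb Q_\ve\nabla\phi_\ve=(\vb Q_\ve+\tfrac{s_+}{3}\vb I)\nabla\phi_\ve-\tfrac{s_+}{3}\nabla\phi_\ve$ and estimate
\begin{align*}
\int_\Omega|\vb Q_\ve\nabla\Phi(\phi_\ve)|\d x
&\le \int_\Omega\sqrt{W(\phi_\ve)}\,\left|\qty(\vb Q_\ve+\tfrac{s_+}{3}\vb I)\nabla\phi_\ve\right|\d x+\tfrac{s_+}{3}\int_\Omega|\nabla\Phi(\phi_\ve)|\d x\\
&\le \qty(\int_\Omega\ve^{-1}W(\phi_\ve)\d x)^{1/2}\qty(\int_\Omega\ve\left|\qty(\vb Q_\ve+\tfrac{s_+}{3}\vb I)\nabla\phi_\ve\right|^2\d x)^{1/2}+C,
\end{align*}
where the Cauchy--Schwarz step is engineered so that the $\ve$-scalings cancel: the two factors are exactly $(E^{\rm mix}_\ve)^{1/2}$ and $(E^{\rm anch}_\ve)^{1/2}$, both uniformly bounded, and the residual term is controlled by $\tfrac{s_+}{6}E^{\rm mix}_\ve\le C$. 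This coupling of the mixing and anchoring energies is the genuinely new ingredient and exploits the specific $\ve$-weighting chosen in \eqref{E-anch}. The resulting uniform total-variation bound also shows $\vb Q_\ve\nabla\Phi(\phi_\ve)$ is a well-defined $L^1$ vector field, after which weak-$*$ precompactness follows exactly as for the first measure.
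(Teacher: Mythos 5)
Your proposal is correct and follows essentially the same route as the paper: the Modica--Mortola inequality to bound $\nabla\Phi(\phi_\ve)$ in $L^1$, $BV$-compactness plus inversion of the strictly increasing $\Phi$ and a uniform-integrability upgrade for (b), and for (c) the same split $\vb Q_\ve\nabla\Phi(\phi_\ve)=\bigl(\vb Q_\ve+\tfrac{s_+}{3}\vb I\bigr)\nabla\Phi(\phi_\ve)-\tfrac{s_+}{3}\nabla\Phi(\phi_\ve)$ with the mixing--anchoring coupling (your Cauchy--Schwarz step is the integral form of the paper's pointwise $a^2+b^2\ge 2ab$ argument), followed by Riesz representation and Banach--Alaoglu. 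The only cosmetic differences are that you control $\|\Phi(\phi_\ve)\|_{L^1}$ by Poincar\'e in $W_0^{1,1}$ where the paper uses the uniform $L^4$ bound on $\phi_\ve$ coming from the growth of $W$, and that you spell out the final reduction clause which the paper leaves implicit.
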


Then, we state the $\Gamma$-convergence of $E_\ve$ to $E_0$.
\begin{proposition} \label{ldgphf-gam-lim}
Work with the same assumptions as in Theorem \ref{sharp-intf-limit}, and the following properties hold for all continuous $\vb Q\in H_0^1$ and smooth regions $D\subset\subset\Omega$ satisfying $|D|=V_0$.
\begin{enumerate}[\rm(a)] \label{pg:gamprop}
\item For all sequences $\phi_\ve\to I_{D}$ ($I_D$ is the indicator function of $D$) in $L^1$, and $\vb Q_\ve\rightharpoonup \vb Q$ in $H_0^1$, with $\vb Q_\ve$ continuous and bounded by a constant independent of $\ve$,
\begin{equation} \label{ldgphf-liminf}
    \liminf_{\ve\to 0} E_\ve[\vb Q_\ve,\phi_\ve] \ge E_0[\vb Q,D];
\end{equation}
\item There exists a sequence $\phi_\ve\to I_{D}$ in $L^1$ satisfying the volume constraint \eqref{vol-con},  such that
\begin{equation} \label{ldgphf-limsup} 
    \limsup_{\ve\to 0} E_\ve[\vb Q,\phi_\ve] \le E_0[\vb Q,D].
\end{equation}
\end{enumerate}
\end{proposition}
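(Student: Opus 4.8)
The plan is to establish the two halves of the $\Gamma$-convergence separately, organised around the decomposition \eqref{ldgphf-alt} of $E_\ve$ into the LdG energy, the void energy, and the combined mixing--anchoring term $\omega_p\lambda\int_\Omega[\ve a_{\vb Q}(\nabla\phi)+\ve^{-1}W(\phi)]\d x$, matching each piece against the corresponding term of $E_0$ in \eqref{ldgphfsialt}. Throughout I pass freely to subsequences (for the liminf, I first select one realising the liminf) and exploit that $\vb Q_\ve\rightharpoonup\vb Q$ in $H_0^1$ with a uniform $L^\infty$ bound, so that $\vb Q_\ve\to\vb Q$ strongly in $L^2$ by Rellich--Kondrachov, while $\phi_\ve\to I_D$ in $L^1$ yields, along a subsequence, pointwise a.e.\ convergence; taking the admissible $\phi_\ve$ in $[0,1]$, the function $\Phi(\phi_\ve)\to\tfrac16 I_D$ boundedly a.e.\ and hence in $L^2$, since $\Phi(1)=\int_0^1 s(1-s)\d s=\tfrac16$.

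For the lower bound (a), the LdG term is weakly lower semicontinuous by Lemma \ref{ldgphf-wlsc}. The void term in fact converges: writing $(1-\phi_\ve)^2|\vb Q_\ve|^2-I_{D^c}|\vb Q|^2=[(1-\phi_\ve)^2-I_{D^c}]|\vb Q_\ve|^2+I_{D^c}(|\vb Q_\ve|^2-|\vb Q|^2)$ and using $\|\vb Q_\ve\|_{L^\infty}\le M$ on the first summand together with strong $L^2$ convergence on the second gives $E^{\rm void}[\vb Q_\ve,\phi_\ve]\to E^{\rm void}[\vb Q,I_D]$. For the mixing--anchoring term I apply Young's inequality pointwise, $\ve a_{\vb Q_\ve}(\nabla\phi_\ve)+\ve^{-1}W(\phi_\ve)\ge 2\sqrt{a_{\vb Q_\ve}(\nabla\phi_\ve)W(\phi_\ve)}$, then use the chain rule $\sqrt{W(\phi_\ve)}\,\nabla\phi_\ve=\nabla\Phi(\phi_\ve)$ and the $2$-homogeneity of $a_{\vb Q_\ve}$ to rewrite the right-hand side as $2\sqrt{a_{\vb Q_\ve}(\nabla\Phi(\phi_\ve))}$. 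Observing that $\sqrt{a_{\vb Q}(\nabla\Phi)}=f(\nabla\Phi,\vb Q\nabla\Phi)$ with $f(p,r)=(|p|^2+\tfrac{\omega_a}{\omega_p}|r+\tfrac{s_+}{3}p|^2)^{1/2}$ a norm on $\BbbR^3\times\BbbR^3$ (hence convex and $1$-homogeneous), and invoking Proposition \ref{ldgphf-comp}(c) for weak-$*$ precompactness of the vector measures $\bm\mu_\ve=(\nabla\Phi(\phi_\ve),\vb Q_\ve\nabla\Phi(\phi_\ve))\d x$, Reshetnyak's lower semicontinuity theorem gives $\liminf\int 2\sqrt{a_{\vb Q_\ve}(\nabla\Phi(\phi_\ve))}\d x\ge 2\int f(\d\bm\mu/\d|\bm\mu|)\d|\bm\mu|$ for the limit measure $\bm\mu$.

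The crux of (a), which I expect to be the main obstacle, is identifying $\bm\mu$. From $\Phi(\phi_\ve)\to\tfrac16 I_D$ in $L^1$ we get $\nabla\Phi(\phi_\ve)\d x\overset{*}{\rightharpoonup}-\tfrac16\bm\nu\,\mathcal{H}^2\!\lfloor\pt D$; the genuinely delicate object is $\vb Q_\ve\nabla\Phi(\phi_\ve)\d x$, since $\vb Q_\ve$ converges only weakly in $H^1$ (not uniformly) while $\nabla\Phi(\phi_\ve)$ concentrates on the interface, so a naive product passage fails. I would resolve this by integrating by parts to move the derivative off the concentrating factor: for $\bm\varphi\in C_0^\infty$, using symmetry of $\vb Q_\ve$ and $\Phi(\phi_\ve)|_{\pt\Omega}=0$, one has $\int\bm\varphi\cdot\vb Q_\ve\nabla\Phi(\phi_\ve)\d x=-\int\Phi(\phi_\ve)\,\divergence(\vb Q_\ve\bm\varphi)\d x$, and now $\Phi(\phi_\ve)\to\tfrac16 I_D$ strongly in $L^2$ pairs against $\divergence(\vb Q_\ve\bm\varphi)\rightharpoonup\divergence(\vb Q\bm\varphi)$ weakly in $L^2$, giving the limit $-\tfrac16\int_D\divergence(\vb Q\bm\varphi)\d x=-\tfrac16\int_{\pt D}\bm\varphi\cdot\vb Q\bm\nu\,\d S$. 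Hence $\bm\mu=-\tfrac16(\bm\nu,\vb Q\bm\nu)\,\mathcal{H}^2\!\lfloor\pt D$ and $2\int f(\d\bm\mu/\d|\bm\mu|)\d|\bm\mu|=\tfrac13\int_{\pt D}\sqrt{a_{\vb Q}(\bm\nu)}\,\d S$. Multiplying by $\omega_p\lambda$ and summing the three contributions (via superadditivity of $\liminf$ and convergence of the void term) yields $\liminf E_\ve\ge E_0[\vb Q,D]$.

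For the upper bound (b), I would build a one-dimensional optimal-profile recovery sequence along the signed distance $h$ to $D$ (positive inside), as in Remark \ref{pg:ne2}, but rescaled to capture the anisotropy. Since the equipartitioned profile $\gamma$ (solving $(\gamma')^2=W(\gamma)$) used as $\gamma(h/\ve)$ would give $\tfrac16\int_{\pt D}[a_{\vb Q}(\bm\nu)+1]\d S\ge\tfrac13\int_{\pt D}\sqrt{a_{\vb Q}(\bm\nu)}\,\d S$ rather than equality, I instead set $\phi_\ve=\gamma\big(h/(\ve r)\big)$ near $\pt D$ with $r=\sqrt{a_{\vb Q}(\nabla h)}$ (extended off $\pt D$ and smoothly cut off to $0$ and $1$ away from the interface). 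A coarea computation with the substitution $t=\ve r\tau$ then produces $\tfrac16\int_{\pt D}\big(\tfrac{a_{\vb Q}(\bm\nu)}{r}+r\big)\d S$, minimised exactly at $r=\sqrt{a_{\vb Q}(\bm\nu)}$ to give the sharp value $\tfrac13\int_{\pt D}\sqrt{a_{\vb Q}(\bm\nu)}\,\d S$; the gradient correction from $\nabla r$ and from the linear cutoffs is lower order because $|h|=O(\ve)$ on $\{\gamma'\ne 0\}$. As $\vb Q$ is only continuous, $r$ is merely continuous, so I would first replace it by a mollification $r^\delta\to r$, construct the profile with $r^\delta$, and diagonalise in $\delta$, using continuity of $\vb Q$ to pass $\int_{\pt D}(\tfrac{a_{\vb Q}(\bm\nu)}{r^\delta}+r^\delta)\to 2\int_{\pt D}\sqrt{a_{\vb Q}(\bm\nu)}$. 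The void term converges by dominated convergence and the LdG term is unchanged, while the volume constraint $\int\phi_\ve=V_0$ is restored by a small ($O(\ve)$) normal perturbation of the transition level set whose energy cost vanishes as $\ve\to0$, completing $\limsup E_\ve[\vb Q,\phi_\ve]\le E_0[\vb Q,D]$.
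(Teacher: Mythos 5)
Your proposal is sound, and its two halves relate to the paper's proof quite differently.

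For the lower bound (a), your argument is essentially the paper's in different clothing. The paper's duality functional $V(\bm\mu_1,\bm\mu_2)$ of Lemma \ref{sublin-form-prop} is precisely the total variation of the joint vector measure with respect to a Euclidean-type norm; its weak-$*$ lower semicontinuity proof is the sup-of-linear-functionals argument underlying the Reshetnyak theorem you invoke, and your identification of the limit of $\vb Q_\ve\nabla\Phi(\phi_\ve)$ via $\int\bm\varphi\cdot\vb Q_\ve\nabla\Phi(\phi_\ve)\d x=-\int\Phi(\phi_\ve)\divergence(\vb Q_\ve\bm\varphi)\d x$ is the same integration-by-parts device as the paper's component-wise claim \eqref{l1-h1w-then-rmws}. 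One step you must make explicit: part (a) does not assume $0\le\phi_\ve\le 1$, and since $\Phi(s)$ grows cubically, both $\Phi(\phi_\ve)\to\frac16 I_D$ in $L^2$ and the domination of the void integrand can fail without it. The fix is the truncation $\tilde\phi_\ve=\max\{\min\{\phi_\ve,1\},0\}$, which decreases every term of $E_\ve$ and preserves $L^1$ convergence, so the liminf bound for $\tilde\phi_\ve$ implies it for $\phi_\ve$; the paper carries this out at the start of its proof of Claim (III), and alternatively sidesteps the issue for the void term by using only Fatou's lemma rather than full convergence.

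For the upper bound (b) you genuinely depart from the paper. The paper constructs an anisotropic signed distance $h^a$ solving $a(x,\nabla h^a)=1$ by the method of characteristics (Lemma \ref{moc-solves-eik}), so that the standard one-dimensional profile automatically equipartitions the energy; you keep the Euclidean signed distance and instead rescale the profile width pointwise by $r=\sqrt{a_{\vb Q}(\nabla h)}$, which the inequality $a/r+r\ge 2\sqrt{a}$ shows is exactly the optimal width, recovering the same constant $\frac13\int_{\pt D}|a_{\vb Q}(\bm\nu)|^{1/2}\d S$. Your route avoids the nonlinear PDE machinery entirely, a real simplification; the costs appear elsewhere. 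First, your exact profile $\gamma$ with $(\gamma')^2=W(\gamma)$ has infinite tails, so the cutoff errors (exponentially small but nonzero) must be estimated, whereas the paper's profile \eqref{chi-eq}, regularized by $+\ve$ under the square root, reaches the pure phases at finite $\eta_\ve\to 0$ and needs no tail estimate. Second, the $\nabla r$ correction forces your mollification $r^\delta$ and a diagonalization in $(\ve,\delta)$; this is structurally parallel to the paper's Step 4, which also needs smooth approximations $\vb A_k$ and a diagonalization because $\vb Q$ is merely continuous, so neither method wins here. Third, your volume-constraint repair by an $O(\ve)$ normal shift of the transition layer should be made quantitative, for instance by the paper's device of shifting the profile argument by $\delta_\ve\in[0,\eta_\ve]$ chosen through the intermediate value theorem. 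None of these is a fatal gap; all can be completed along the lines you indicate.
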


The lower and upper limits of Proposition \ref{ldgphf-gam-lim} rely on the following claims.
\begin{itemize}
\item Claim (I): For all sequences $\phi_\ve\to I_D$ in $L^1$ and $\vb Q_\ve\to\vb Q$ in $L^1$,
\begin{equation} \label{gamlim-1-2-inf}
\liminf_{\ve\to 0} \qty( E^{\rm LdG}[\vb Q_\ve] + \omega_v\lambda^2 E^{\rm void}[\vb Q_\ve,\phi_\ve]) \ge 
E^{\rm LdG}[\vb Q] 
+\omega_v\lambda^2 E^{\rm void}[\vb Q,I_D]
\end{equation}
\item Claim (II): For all $\phi_\ve\to I_D$ a.e. with $0\le\phi_\ve\le 1$, it holds that
\begin{equation} \label{gamlim-1-2-sup}
    \lim_{\ve\to 0} \qty( E^{\rm LdG}[\vb Q] + \omega_v\lambda^2 E^{\rm void}[\vb Q,\phi_\ve]) = E^{\rm LdG}[\vb Q] +\omega_v\lambda^2 E^{\rm void}[\vb Q,I_D]
\end{equation}
\item Claim (III): For all sequences $\phi_\ve\to I_{D}$ in $L^1$ and $\vb Q_\ve\rightharpoonup \vb Q$ in $H_0^1$ with a uniform bound independent of $\ve$,
\begin{equation} \label{gamlim-3-4-inf}
    \liminf_{\ve\to 0} \int_\Omega \qty[\ve a_{\vb Q_\ve}(\nabla\phi_\ve) + \ve^{-1}W(\phi_\ve)] \d x 
    \ge \frac13 \int_{\pt D} |a_{\vb Q}(\bm\nu)|^{\frac12}\d S(x).
\end{equation}
\item Claim (IV): There exists a sequence $\phi_\ve\to I_{D}$ a.e. with $0\le\phi_\ve\le 1$ and the volume constraint \eqref{vol-con}, such that
\begin{equation} \label{gamlim-3-4-sup}
    \limsup_{\ve\to 0} \int_\Omega \qty[\ve a_{\vb Q}(\nabla\phi_\ve) + \ve^{-1}W(\phi_\ve)] \d x
    \le \frac13 \int_{\pt D} |a_{\vb Q}(\bm\nu)|^{\frac12}\d S(x).
\end{equation}
\end{itemize}
Obviously, Claims (I) and (II) concern the convergence of the first two terms of \eqref{ldgphf-alt} and \eqref{ldgphfsialt}, while Claims (III) and (IV) are about the final term.
For technical reasons, we note that the sequences in Claim (II) and (IV) have $\vb Q$ fixed, and that the sequences in Claim (IV) satisfy Claim (II). Claims (I), (II) and (III) are stated without the volume constraint \eqref{vol-con} on $\{\phi_\ve\}$, and are therefore stronger than necessary.

\subsection{Detailed proof}

We start with compactness.
\begin{proof}[Proof of Proposition \ref{ldgphf-comp}] \label{pg:comp-pf}
(a) Weak precompactness of $\{\vb Q_\ve\}$. We notice that the LdG elastic energy \eqref{ldg-ela} is a multiple of the $H^1$ norm of $\vb Q_\ve$, so the uniform boundedness of $E_\ve$ implies that
\[\|\vb Q_\ve\|_{H^1} \le M\]
for some uniform constant $M$. Since $H_0^1$ is a Hilbert space, the bounded sequence $\{\vb Q_\ve\}$ is always weak precompact \cite{evans_partial_2010}.

(b) Precompactness of $\{\phi_\ve\}$.
We recall that
\[E^{\rm mix}_\ve= \int_\Omega \left[ \ve |\nabla\phi_\ve|^2 + \ve^{-1} W(\phi_\ve) \right] \d x\]
is the well-known Van der Waals-Cahn-Hilliard energy \cite{cahn_free_1958,van_der_waals_thermodynamic_1979}.
The rest of the proof follows from a standard method (see e.g. \cite[Prop. 3]{sternberg_effect_1988}).

First, $W(s)=O(s^4)$ as $s\to\infty$, so the fact that $E^{\rm mix}_\ve$ is uniformly bounded implies that $\phi_\ve$ is uniformly bounded in $L^4$. Using an elementary inequality $a^2+b^2\ge 2ab$ to cancel out $\ve$, we get
\begin{equation}\label{a2b2-ge-2ab}
\begin{aligned}
    E_\ve^{\rm mix}[\phi_\ve] &\ge \int_\Omega 2\sqrt{\ve |\nabla\phi_\ve|^2\cdot \ve^{-1} W(\phi_\ve)}\d x \\
    &=2\int_\Omega |\nabla\Phi(\phi_\ve)|\d x,
\end{aligned}
\end{equation}
where the chain rule on $\Phi$ \eqref{int-sqrt-W} is used.
Hence, $\Phi(\phi_\ve)$ is uniformly bounded in $\mathrm{BV}(\Omega)$, the space of functions with bounded variation \cite{evans_measure_2015,giusti_minimal_1984}. By the compactness of $\mathrm{BV}$ in $L^1$, there exists a $L^1$ convergent subsequence $\{\Phi(\phi_{\ve_k})\}$. 
Since $\Phi$ is a strictly monotonic increasing function on $\BbbR$, $\phi_{\ve_k}=\Phi^{-1}(\Phi(\phi_{\ve_k}))$ converges in measure. But since $\{\phi_{\ve_k}\}$ is also uniformly bounded in $L^4$, the convergence is in $L^1$ as well.

Moreover, we notice that
\[\int_\Omega W(\phi_\ve)\d x \le \ve E^{\rm mix}_\ve \to 0,\ \text{as }\ve\to 0,\]
so the $L^1$ limit $\phi$ must satisfy that $W(\phi)=0,$ a.e., which means it only takes binary values $0$ and $1$.

(c) Weak-$*$ precompactness of the vector-valued Radon measures.
By \eqref{a2b2-ge-2ab}, we have known that $\nabla\Phi(\phi_\ve)$ are uniformly bounded in $L^1$. We regard $\nabla\Phi(\phi_\ve)$ as a vector-valued bounded Radon measure, i.e. a functional of continuous vector-valued functions $\bm g \in C_0(\Omega)$ defined through
\[\<\bm g,\nabla\Phi(\phi_\ve)\> \triangleq \int_\Omega \bm g\cdot \nabla\Phi(\phi_\ve)\d x.\]
By the well-known Riesz representation theorem \cite[Thm. 1.38]{evans_measure_2015}, Radon measures on $\Omega$ constitute the dual space of $C_0(\Omega)$, the space of continuous functions on $\Omega$ with compact support, for all bounded Lipschitz domains $\Omega$. $L^1(\Omega)$ equipped with its norm is a closed subspace of the Radon measures.
By Banach-Alaoglu theorem, bounded Radon measures are weak-$*$ compact, so $\{\nabla\Phi(\phi_\ve)\}$ is weak-$*$ precompact \cite[Thm. 1.41]{evans_measure_2015}.

Similarly, we take the anchoring density $\ve|(\vb Q_\ve+s_+\vb I/3)\nabla\phi_\ve|^2$ from $E_\ve^{\rm anch}$ and the double-well term $\ve W(\phi_\ve)$ from $E_\ve^{\rm mix}$ and perform the same operation as \eqref{a2b2-ge-2ab} to get the uniform boundedness of
\[M \ge \int_\Omega \left|\qty(\vb Q_\ve+\frac{s_+}{3}\vb I)\nabla\Phi(\phi_\ve)\right| \d x,\]
so $(\vb Q_\ve+s_+\vb I/3)\nabla\Phi(\phi_\ve)$ is also uniformly bounded in $L^1$. By the triangle inequality,
\[\vb Q_\ve \nabla\Phi(\phi_\ve)=\qty(\vb Q_\ve+\frac{s_+}{3}\vb I)\nabla\Phi(\phi_\ve) - \frac{s_+}{3}\nabla\Phi(\phi_\ve)\]
is uniformly bounded in $L^1$. We apply the same reasoning as before and complete the proof.
\end{proof} 

Claims (I) and (II) can be obtained by means of basic calculus.
\begin{proof}[Proof of Claim (I)]
Assume w.l.o.g. that 
\[\liminf_{\ve\to 0}\int_\Omega |\grad\vb Q_\ve|^2\d x<\infty,\]
for otherwise the conclusion is trivial.
Due to the weak compactness of $H_0^1$, we extract a subsequence denoted by $\{\vb Q_{\ve_k}\}$ such that
\[\lim_{k\to\infty}\int_\Omega |\grad\vb Q_{\ve_k}|^2\d x = \liminf_{\ve\to 0} \int_\Omega |\grad\vb Q_\ve|^2\d x \text{ and }\vb Q_{\ve_k}\rightharpoonup \vb Q \text{ in }H_0^1.\]
Note that since $\vb Q_\ve\to\vb Q$ in $L^1$ by assumption, the weak limit of $\vb Q_{\ve_k}$ in $H_0^1$ must also be $\vb Q$.
Hence, we have by the w.l.s.c. property \cite[sect. 8.2, Theorem I]{evans_partial_2010} that
\[\liminf_{\ve\to 0}\int_\Omega |\grad\vb Q_\ve|^2\d x = \lim_{k\to\infty} \int_\Omega|\grad\vb Q_{\ve_k}|^2\d x \ge \int_\Omega |\grad\vb Q|^2\d x.\]
Since the bulk energy $\bar F_b(\vb Q)$ is a degree-4 polynomial $\bar F_b(\vb Q_{\ve_k})\to \bar F_b(\vb Q)$ strongly by the compact Sobolev embedding $H_0^1\overset{c}{\hookrightarrow} L^4$ in 3 dimensions \cite{adams_sobolev_2003,gilbarg_elliptic_1977}.
Thus,
\[ \lim_{k\to\infty}\int_\Omega \bar F_b(\vb Q_{\ve_k})\d x =\int_\Omega \bar F_b(\vb Q)\d x.\]
For the void penalty $E^{\rm void}$ \eqref{E-void}, we once again extract a subsequence $\{\phi_{\ve_k}\}\to I_D$ (w.l.o.g. the same subsequence as before) such that
\[\lim_{k\to\infty} E^{\rm void}[\vb Q_{\ve_k},\phi_{\ve_k}] = \liminf_{\ve\to 0} E^{\rm void}[\vb Q_{\ve},\phi_\ve]\text{ and }\phi_{\ve_k}\to I_D,\text{a.e.}.\]
Recall the expression \eqref{E-void} of $E^{\rm void}$. It follows from Fatou's lemma \cite{rudin_real_1987} that
\begin{align*}
    \liminf_{\ve\to 0} E^{\rm void}[\vb Q_\ve,\phi_\ve]
    &= \lim_{k\to\infty} \int_\Omega \frac12 (1-\phi_{\ve_k})^2 |\vb Q_{\ve_k}|^2\d x \\
    &\ge \int_\Omega \liminf_{k\to\infty} \frac12(1-\phi_{\ve_k})^2 |\vb Q_{\ve_k}|^2\d x
    =E^{\rm void}[\vb Q,I_D].
\end{align*}
Adding the expressions above gives \eqref{gamlim-1-2-inf}.
\end{proof}
\begin{proof}[Proof of Claim (II)]
The same LdG energy is featured on both sides of \eqref{gamlim-1-2-sup}, so we only have to check the void energy. Since $\phi_\ve$ and $\vb Q\in C^0$ are uniformly bounded and since $\phi_\ve \to I_D$ a.e., it follows from the dominated convergence theorem that
\[\lim_{\ve\to 0} \int_{\Omega} \frac12(1-\phi_\ve)^2|\vb Q|^2\d x = \int_\Omega \frac12 (1-I_D)^2 |\vb Q|^2\d x,\]
which yields \eqref{gamlim-1-2-sup}, as required.
\end{proof}

Then, we prove Claim (III) with properties of Radon measures \cite{evans_measure_2015}, as well as the method of \cite[Prop. 1]{modica_gradient_1987}.

\begin{proof}[Proof of Claim (III)]  \label{pg:clm3-pf}
We first argue that we can assume $\phi_\ve$ are uniformly bounded in $[0,1]$.
Otherwise, we can define the truncated function
\[\tilde\phi_\ve(x)=\max\{\min\{\phi(x),1\},0\}=\begin{cases}
    0, & \phi(x)<0, \\
    \phi(x), &  0\le \phi(x)\le 1, \\
    1, & \phi(x)> 1.
\end{cases}\]
Recall that $I_D$ takes binary values 0 and 1, so obviously $|\tilde\phi_\ve(x) - I_D(x)| \le |\phi_\ve(x)-I_D(x)|$ for all $x$. Hence,
\[\|\tilde\phi_\ve-\phi_0\|_{L^1} \le \|\phi_\ve-\phi_0\|_{L^1} \to 0,\]
i.e. the sequence $\{\tilde\phi_\ve\}$ converges in $L^1$ as well.
In addition, it follows from the definition of the double-well function $W$ and the chain rule that
\[W(\tilde\phi_\ve) \le W(\phi_\ve), \ 
\nabla\tilde\phi_\ve = I_{\{0<\phi_\ve < 1\}} \nabla\phi_\ve,\]
so
\[\int_\Omega [\ve^{-1} W(\tilde\phi_\ve)+\ve a_{\vb Q_\ve}(\nabla\tilde\phi_\ve)]\d x \le \int_\Omega [\ve^{-1} W(\phi_\ve)+\ve a_{\vb Q_\ve}(\nabla\phi_\ve)]\d x.\]
Therefore, in order to prove Claim (III), it suffices to check that
\[\liminf_{\ve\to 0} \int_\Omega [\ve^{-1} W(\tilde\phi_\ve)+\ve a_{\vb Q_\ve}(\nabla\tilde\phi_\ve)]\d x\ge \frac13 \int_{\pt D} |a_{\vb Q}(\bm\nu)|^{\frac12}\d S,\]
with $\tilde\phi_\ve \to I_D$ in $L^1(\Omega)$ ($\ve\to 0$) and the extra assumption $0 \le \tilde\phi_\ve \le 1$.

Extract a subsequence $\{\vb Q_{\ve_k},\phi_{\ve_k}\}\triangleq\{\vb Q_k,\phi_k\}$ such that
\[\lim_{k\to\infty} \int_\Omega [\ve^{-1} W(\tilde\phi_k)+\ve a_{\vb Q_k}(\nabla\tilde\phi_k)]\d x
=\liminf_{\ve\to 0} \int_\Omega [\ve^{-1} W(\tilde\phi_\ve)+\ve a_{\vb Q_\ve}(\nabla\tilde\phi_\ve)]\d x <\infty.\]
By \eqref{ldgphf-alt}, the LHS of the above formula equals the sum of $E^{\rm mix}_{\ve_k}[\phi_k]$ and $E^{\rm anch}_{\ve_k}[\vb Q_k,\phi_k]$, so these terms are bounded with respect to $\ve$.
With the compactness result from Proposition \ref{ldgphf-comp}, we can assume further that the Radon measures $\nabla\Phi(\phi_k)$ and $\vb Q_k\nabla\Phi(\phi_k)$ are both weak-$*$ convergent by extracting another subsequence.
Since we also assume that $\phi_k\to I_D$ in $L^1$ (so $\Phi(\phi_k)\to \Phi(I_D)$ in $L^1$ by the Lipschitz continuity of $\Phi$ \eqref{int-sqrt-W}) and that $\vb Q_k\rightharpoonup \vb Q$ in $H_0^1$, the respective weak-$*$ limits of these measures must be
\begin{equation} \label{l1-h1w-then-rmws}
    \nabla\Phi(\phi_k) \overset{*}{\rightharpoonup} \D \Phi(I_D),\ 
\vb Q_k\nabla\Phi(\phi_k)\overset{*}{\rightharpoonup} \vb Q\D \Phi(I_D),
\end{equation}
where $\D$ stands for the weak derivative as a generalized function, as distinguished from $\nabla$.
\begin{proof}[Proof of \eqref{l1-h1w-then-rmws}]
It suffices to prove a component-wise result. That is, for all function sequences $f_n\rightharpoonup f$ in $H_0^1(\Omega)$ and $g_n\to g$ in $L^1(\Omega)$, satisfying further that $f_n,f\in C^0\cap H_0^1$, $f_n,f,g_n,g$ are uniformly bounded, and that $\pt_{x_i}g_n$ and $f_n \pt_{x_i} g_n$ both weak-$*$ converge to some Radon measure on $\Omega$, it must hold that
\[\pt_{x_i}g_n \overset{*}{\rightharpoonup} \pt_{x_i} g,\ 
f_n \pt_{x_i} g_n \overset{*}{\rightharpoonup} f \pt_{x_i} g.\]
Up to extracting a subsequence, we may assume further that $f_n\to f$ and $g_n\to g$ a.e..
We need to verify for each $\psi \in C_0^\infty(\Omega)$ that
\[\int_\Omega \psi \pt_{x_i}g_n\d x \to \int_{\Omega} \psi \D_{x_i} g\d x,\ 
\int_\Omega \psi f_n \pt_{x_i}g_n\d x \to \int_{\Omega} \psi f \D_{x_i} g\d x.\]
Using the definition for weak derivatives and integration by parts, we see that the condition is equivalent to
\[\int_\Omega g_n\pt_{x_i}\psi \d x \to \int_{\Omega} g \pt_{x_i}\psi\d x,\ 
\int_\Omega (f_n g_n\pt_{x_i}\psi + \psi g_n \pt_{x_i} f_n)\d x
\to \int_\Omega (f g\pt_{x_i}\psi + \psi g \pt_{x_i} f)\d x.\]
The first convergence is obviously correct, as $\pt_{x_i}\psi$ is smooth. 
By Lebesgue's dominated convergence theorem, since $f_ng_n\to fg$ a.e. and $\{f_ng_n\}$ is uniformly bounded by assumption,
\[\int_\Omega f_n g_n\pt_{x_i}\psi\d x
\to \int_\Omega f g\pt_{x_i}\psi\d x.\]
Since $\psi g_n\to \psi g$ a.e. and $\pt_{x_i} f_n\to \pt_{x_i} f$ weakly in $L^2$,
\[\int_\Omega \psi g_n\pt_{x_i} f_n\d x
\to \int_\Omega \psi g\pt_{x_i} f\d x.\]
Hence, the second convergence is also correct.
The correctness of \eqref{l1-h1w-then-rmws} is then established.
\end{proof}

Using the same elementary inequality technique as in \eqref{a2b2-ge-2ab}, we compute that
\begin{equation} \label{liminf-ge-1}
\begin{aligned}
    \int_\Omega \qty[\ve a_{\vb Q_k}(\nabla\phi_k) + \ve^{-1}W(\phi_k)] \d x &\ge \int_\Omega 2\sqrt{ W(\phi_k) a_{\vb Q_k}(\nabla\phi_k)}\d x \\
    &=2\int_\Omega \sqrt{a_{\vb Q_k}\qty(\sqrt{W(\phi_k)}\nabla\phi_k)} \d x \\
    &= 2\int_\Omega \sqrt{a_{\vb Q_k}(\nabla\Phi(\phi_k))} \d x.
\end{aligned}
\end{equation}
Inspired by the duality argument from \cite{modica_gradient_1987}, we define
\begin{equation} \label{sublin-form}
\begin{aligned}
    V(\bm\mu_1,\bm\mu_2) = \sup\bigg\{&
    \int_\Omega \bm g_1\cdot\bm\mu_1\d x + \int_\Omega \bm g_2\cdot \bm\mu_2\d x:\\
    &\bm g_1,\bm g_2\in C_0(\Omega),\
    \sup_\Omega (|\bm g_1|^2+|\bm g_2|^2)\le 1\bigg\},
\end{aligned}
\end{equation}
where $\bm\mu_i$ are vector-valued Radon measures. $V$ is obviously nonnegative for all $\bm\mu_1,\bm\mu_2$.
(Note that $V$ is defined similarly to the total variation \cite{evans_measure_2015,giusti_minimal_1984}.)
We make the following observations.
\begin{lemma} \label{sublin-form-prop}
\begin{enumerate}[\rm (a)]
    \item $V$ is weak-$*$ lower semi-continuous with respect to Radon measures.
    \item For $\bm\mu_1= \nabla\Phi(\phi_k)$ and $\bm\mu_2=\sqrt{\omega_a/\omega_p}(\vb Q_k+s_+\vb I/3)\nabla\Phi(\phi_k),$ it holds that
    \[V(\bm\mu_1,\bm\mu_2)=\int_\Omega \sqrt{a_{\vb Q_k}(\nabla\Phi(\phi_\ve))}\d x\]
    \item For $\bm\mu_1=\D\Phi(I_D)$ and $\bm\mu_2=\sqrt{\omega_a/\omega_p}(\vb Q+s_+\vb I/3)\D\Phi(I_D)$, it holds that
    \[V(\bm\mu_1,\bm\mu_2)=\frac16\int_{\pt D} a_{\vb Q}(\bm\nu)\d S.\]
\end{enumerate}
\end{lemma}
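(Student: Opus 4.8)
The plan is to read $V(\bm\mu_1,\bm\mu_2)$ as the total-variation (mass) norm of the single $\BbbR^6$-valued Radon measure $\bm M=(\bm\mu_1,\bm\mu_2)$ on $\Omega$. Indeed, writing $\bm g=(\bm g_1,\bm g_2)$, the constraint $\sup_\Omega(|\bm g_1|^2+|\bm g_2|^2)\le1$ is exactly the requirement $\bm g\in C_0(\Omega;\BbbR^6)$ with Euclidean norm $|\bm g|\le1$, so by the Riesz representation duality between $C_0(\Omega;\BbbR^6)$ and vector-valued Radon measures \cite[Thm.~1.38, 1.41]{evans_measure_2015},
\[V(\bm\mu_1,\bm\mu_2)=\sup\left\{\int_\Omega \bm g\cdot\d\bm M:\ \bm g\in C_0(\Omega;\BbbR^6),\ |\bm g|\le1\right\}=|\bm M|(\Omega).\]
Once this identification is established, all three assertions reduce to elementary facts about the mass of $\bm M$.

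Part (a) is then immediate: for each fixed admissible $\bm g$ the functional $\bm M\mapsto\int_\Omega\bm g\cdot\d\bm M$ is weak-$*$ continuous by the very definition of weak-$*$ convergence of Radon measures, and a pointwise supremum of weak-$*$ continuous functionals is weak-$*$ lower semi-continuous; hence so is $V=|\cdot|(\Omega)$. This needs no computation.

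For parts (b) and (c) I would compute $|\bm M|(\Omega)$ by identifying the polar (Radon--Nikodym) density of $\bm M$ and integrating its Euclidean norm. In (b) the measure is absolutely continuous with $L^1$ density $\big(\nabla\Phi(\phi_k),\ \sqrt{\omega_a/\omega_p}\,(\vb Q_k+\tfrac{s_+}{3}\vb I)\nabla\Phi(\phi_k)\big)$, whose squared Euclidean norm is precisely $a_{\vb Q_k}(\nabla\Phi(\phi_k))$ by \eqref{a-Q-xi}; hence $|\bm M|(\Omega)=\int_\Omega\sqrt{a_{\vb Q_k}(\nabla\Phi(\phi_k))}\,\d x$. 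In (c) the measure is singular: since $\sqrt{W(s)}=s(1-s)$ on $[0,1]$ gives $\Phi(1)-\Phi(0)=\int_0^1 s(1-s)\,\d s=\tfrac16$, we get $\Phi(I_D)=\tfrac16 I_D$, so $\D\Phi(I_D)=\tfrac16\D I_D$ is carried by $\pt D$ with surface density $-\tfrac16\bm\nu$ relative to $\d S$. Thus $\bm M$ has density $-\tfrac16\big(\bm\nu,\ \sqrt{\omega_a/\omega_p}\,(\vb Q+\tfrac{s_+}{3}\vb I)\bm\nu\big)$ against $\d S$ on $\pt D$, of Euclidean norm $\tfrac16\sqrt{a_{\vb Q}(\bm\nu)}$, whence $|\bm M|(\Omega)=\tfrac16\int_{\pt D}\sqrt{a_{\vb Q}(\bm\nu)}\,\d S=\tfrac16\int_{\pt D}|a_{\vb Q}(\bm\nu)|^{1/2}\d S$, which is exactly the form needed to feed Claim~(III).

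The one point demanding care is the duality identity $V=|\bm M|(\Omega)$, specifically that the supremum over \emph{continuous, compactly supported} unit fields $\bm g$ actually attains the full mass. The optimal direction $\d\bm M/\d|\bm M|$ is in general only Borel, so it must be approximated in $C_0(\Omega;\BbbR^6)$ by fields with $|\bm g|\le1$; this is the standard Riesz-representation/Lusin approximation argument, and since $\pt D\subset\subset\Omega$ the compact-support requirement causes no boundary loss. In case (c) the approximation is especially transparent because $\bm\nu$ is continuous on the smooth surface $\pt D$ and $\vb Q\in C^0$, so the optimal field is genuinely continuous there and extends to an admissible $\bm g$. With the identity in hand, (a) supplies weak-$*$ lower semi-continuity, (b) rewrites the lower bound \eqref{liminf-ge-1}, and passing to the weak-$*$ limit \eqref{l1-h1w-then-rmws} via (a) and evaluating through (c) will deliver Claim~(III).
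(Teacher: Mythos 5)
Your proposal is correct and follows essentially the same route as the paper: both prove the identities via the duality supremum over continuous unit fields --- pointwise Cauchy--Schwarz for the upper bound and approximation of the normalized density by admissible $C_0$ fields for attainment --- with your version merely packaging $V$ as the total-variation mass of the single $\BbbR^6$-valued measure $(\bm\mu_1,\bm\mu_2)$, which cleanly unifies parts (b) and (c). Note also that the value you obtain in (c), $\frac16\int_{\pt D}\sqrt{a_{\vb Q}(\bm\nu)}\,\d S$, coincides with what the paper's own proof derives and with what Claim (III) requires; the missing square root in the lemma's printed statement is a typo.
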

\begin{proof}[Proof of Lemma \ref{sublin-form-prop}]
(a) Suppose that the sequence $\{\bm\mu_1^{(k)}, \bm\mu_2^{(k)}\}$ weak-$*$ converges to $\{\bm\mu_1,\bm\mu_2\}$. For every pair $\bm g_1(x),\bm g_2(x)\in C_0(\Omega)$ satisfying $|\bm g_1|^2+|\bm g_2|^2\le 1,\forall x$, it holds that
\[ \int_\Omega \bm g_1\cdot\bm\mu_1^{(k)}\d x + \int_\Omega \bm g_2\cdot \bm\mu_2^{(k)}\d x \le V(\bm\mu_1^{(k)},\bm\mu_2^{(k)}).\]
Taking the lower limit as $k\to\infty$ in the inequality above, we get
\[ \int_\Omega \bm g_1\cdot\bm\mu_1 \d x + \int_\Omega \bm g_2\cdot \bm\mu_2 \d x \le \liminf_{k\to\infty} V(\bm\mu_1,\bm\mu_2), \]
where the LHS results from weak-$*$ convergence.
Then, taking the supremum leads to our conclusion.

(b) First,
\begin{equation} \label{V-le-TV}
\begin{aligned}
    V(\bm\mu_1,\bm\mu_2)
    &\le \int_\Omega \sqrt{|\nabla\Phi(\phi_k)|^2 + \frac{\omega_a}{\omega_p} \left| \qty(\vb Q_k+\frac{s_+}{3}\vb I)\nabla\Phi(\phi_k) \right|^2}\d x \\
    &=\int_\Omega \sqrt{a_{\vb Q_k}(\nabla\Phi(\phi_\ve))}\d x.
\end{aligned}
\end{equation}
Recall the definition \eqref{a-Q-xi}.
Moreover, since both $\bm\mu_1$ and $\bm\mu_2$ are $L^1$ functions, we can choose $\bm g_1,\bm g_2$ arbitrarily close to
\[\bm v_1^* = \frac{\nabla\Phi(\phi_k)}{\sqrt{a_{\vb Q_k}(\nabla\Phi(\phi_\ve))}},\ \bm v_2^*=\frac{\sqrt{\omega_a/\omega_p}(\vb Q_k+\frac{s_+}{3}\vb I)\nabla\Phi(\phi_k)}{\sqrt{a_{\vb Q_k}(\nabla\Phi(\phi_\ve))}}\]
almost everywhere.
Note that $|\bm v_1^*|^2+|\bm v_2^*|^2=1$ according to their definition.
Then,
\[\int_\Omega \qty[ \bm g_1\cdot\nabla\Phi(\phi_k) + \bm g_2\cdot \sqrt{\frac{\omega_a}{\omega_p}}\qty(\vb Q_k+\frac{s_+}{3}\vb I)\nabla\Phi(\phi_k)] \d x \]
can be arbitrarily close to the RHS in \eqref{V-le-TV}, so the equality must hold. That is,
\[V(\bm\mu_1,\bm\mu_2)=\int_\Omega \sqrt{a_{\vb Q_k}(\nabla\Phi(\phi_\ve))}\d x\]
as desired.

(c) It is well-known \cite{evans_measure_2015,giusti_minimal_1984} that when $D$ is a smooth region with $\pt D\subset\Omega$, $\D I_D$ is the normal vector measure on $\pt D$, such that
\[\int_\Omega \bm f\cdot \D I_D \d x = -\int_{\pt D} \bm f\cdot\bm\nu \d S,\]
where $\bm\nu$ is the normal vector, and the negative sign comes from integration by parts. Therefore, we can write according to the definition \eqref{sublin-form} that
\begin{equation} \label{V-eq-bndint}
\begin{aligned}
    V(\bm\mu_1,\mu_2)&=\sup_{|\bm g_1|^2+|\bm g_2|^2\le 1}\int_\Omega \qty[ \bm g_1\cdot\D \Phi(I_D) + \bm g_2\cdot \sqrt{\frac{\omega_a}{\omega_p}}\qty(\vb Q+\frac{s_+}{3}\vb I)\D \Phi(I_D)] \d x  \\
    &=\sup_{|\bm g_1|^2+|\bm g_2|^2\le 1}-\frac16\int_{\pt D} \qty[ \bm g_1\cdot\bm\nu + \bm g_2\cdot \sqrt{\frac{\omega_a}{\omega_p}}\qty(\vb Q+\frac{s_+}{3}\vb I)\bm\nu] \d S.
\end{aligned}
\end{equation}
Note that $\Phi(0)=0, \Phi(1)=\int_0^1 s(1-s)\d s=\frac16$ and that $I_D$ is binary-valued, so we have rewritten
\[\Phi(I_D)= \Phi(1) I_D = \frac16 \Phi(I_D).\]
Since $\pt D$ is smooth, $\bm g_1, \bm g_2$ can be arbitrarily close to being parallel to the unit vector field on $\pt D$
\[\bm w_1^* = -\frac{\bm\nu}{\sqrt{a_{\vb Q}(\bm\nu)}},\ \bm w_2^*=-\frac{\sqrt{\omega_a/\omega_p}(\vb Q+\frac{s_+}{3}\vb I)\bm\nu}{\sqrt{a_{\vb Q}(\bm\nu)}},\]
so the supremum in \eqref{V-eq-bndint} equals 
\[V(\bm\mu_1,\bm\mu_2) = \frac16 \int_{\pt D} \sqrt{|\bm\nu|^2 + \frac{\omega_a}{\omega_p} \left| \qty(\vb Q_k+\frac{s_+}{3}\vb I)\bm\nu \right|^2}\d x
=\frac16\int_{\pt D} \sqrt{a_{\vb Q}(\bm\nu)}\d x\]
as desired.
\end{proof}

With Lemma \ref{sublin-form-prop} proven, it is straightforward to utilize \eqref{liminf-ge-1} and estimate that
\begin{align*}
    \liminf_{k\to\infty} &\int_\Omega \qty[\ve a_{\vb Q_k}(\nabla\phi_k) + \ve^{-1}W(\phi_k)] \d x 
    \ge 2\liminf_{k\to\infty} \int_\Omega \sqrt{a_{\vb Q_k}(\nabla\Phi(\phi_k))} \d x \\
    &\overset{(b)}{=} 2 \liminf_{k\to\infty} V\left( \nabla\Phi(\phi_k)\d x, \sqrt{\frac{\omega_a}{\omega_p}}\qty(\vb Q_k+\frac{s_+}{3}\vb I)\nabla\Phi(\phi_k)\d x \right) \\
    &\overset{(a)}{\ge} 2 V\left( \D \Phi(I_D)\d x, \sqrt{\frac{\omega_a}{\omega_p}}\qty(\vb Q+\frac{s_+}{3}\vb I)\D \Phi(I_D) \d x\right) \\
    &\overset{(c)}{=}\frac13 \int_{\pt D} \sqrt{a_{\vb Q}(\bm\nu)}\d S.
\end{align*}
The proof of Claim (III) \eqref{gamlim-3-4-inf} is complete.
\end{proof}

The proof of Claim (IV) involves the discussion of a generalized Van der Waals-Cahn-Hilliard functional \eqref{E-per}.

\begin{lemma} \label{gamma-limsup} \label{pg:chgen}
Work on a bounded Lipschitz domain $\Omega\subset\BbbR^n$ in general dimensions $n$.
Define the generalized Van der Waals-Cahn-Hilliard functional $L_\ve^a: H^1(\Omega) \to \BbbR\cup \{+\infty\}$ as 
\begin{equation} \label{cahn-hill-gen}
    L_\ve^a(\phi) = \int_\Omega \qty[\ve^{-1} W(\phi) + \ve a(x,\nabla\phi)]\d x,
\end{equation}
with the following assumptions.
\begin{enumerate}[\rm(i)]
\item $a(x,\xi) = a^{ij}(x) \xi_i \xi_j$ and repeated indices are summed over, following Einstein's summation convention. The symmetric-matrix-valued function $\vb A=((a^{ij})):\overline{\Omega} \to \BbbR^{n\times n}$ is continuous and satisfies the uniform elliptic condition
\begin{equation} \label{uni-ell}
    \lambda\vb{I} \preceq \vb A(x) \preceq \Lambda \vb I, \quad \forall x.
\end{equation}
The notation $\vb A\preceq\vb B$ means that $\vb B-\vb A$ is positive definite, where $\vb A,\vb B$ are symmetric matrices.
\item $W(s)$ is a nonnegative continuous double-well potential with strict minima at $s=\alpha,\beta$ ($\alpha<\beta$), and $W(\alpha)=W(\beta)=0$.
\item $D\subset\subset\Omega$ is a region with $C^2$ boundary.
\end{enumerate}
Let $\phi_0=\alpha I_D + \beta I_{D^c}$ be the separation of the phases $\alpha$ and $\beta$.
Also, denote the perimeter of $D$ by $\mcalP(D)=\mathcal{H}_{n-1}(\pt D)$ (the $(n-1)$-dimensional Hausdorff measure \cite{evans_measure_2015}) and introduce the constant
\[c_0=\int_\alpha^\beta \sqrt{W(s)}\d s.\]
Suppose further that $\phi_0=\alpha I_D+\beta I_{D^c}$ satisfies the volume constraint
\begin{equation} \label{vol-con-1}
    \int_\Omega\phi\d x = m
\end{equation}
where $\alpha|\Omega|<m<\beta|\Omega|$, or equivalently $|D|=\frac{\beta|\Omega|-m}{\beta-\alpha}.$
Then, there exists a sequence $\phi_\ve \in W^{1,\infty}(\Omega)$ satisfying the constraint \eqref{vol-con-1}, $\alpha\le\phi_\ve\le \beta$, $\phi_\ve\to \phi_0$ a.e., such that
\begin{equation}
    \limsup_{\ve\to 0} L_\ve^a(\phi_\ve) \le 2c_0 \int_{\pt D} |a(x,\bm\nu)|^{\frac12}\d S(x).
\end{equation}
\end{lemma}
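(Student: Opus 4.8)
The plan is to exhibit an explicit recovery sequence built from the one-dimensional optimal transition profile, rescaled pointwise by the local value of the anisotropic coefficient, and glued to the constants $\alpha$ and $\beta$ away from $\pt D$. Since $D\subset\subset\Omega$ has $C^2$ boundary, the signed distance function $d(x)$ to $\pt D$ (negative inside $D$, positive outside, as in Remark~\ref{pg:ne2}) is $C^2$ with $|\nabla d|=1$ in a tubular neighbourhood $\{|d|<\delta_0\}$, and $\nabla d(x)=\bm\nu$ at the nearest boundary point. Writing $b(x)=a(x,\nabla d(x))=a^{ij}(x)\,\pt_i d\,\pt_j d$, the ellipticity \eqref{uni-ell} gives $\lambda\le b\le\Lambda$, and $b$ is continuous with $b|_{\pt D}=a(x,\bm\nu)$. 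This function $b$ will carry the anisotropic weight directly inside the profile, avoiding an explicit Fermi-coordinate parametrisation of the profile.

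First I would fix the profile. Let $\gamma:\BbbR\to(\alpha,\beta)$ solve $\gamma'=\sqrt{W(\gamma)}$ with $\gamma(-\infty)=\alpha$, $\gamma(+\infty)=\beta$; this standing wave has transition cost exactly $c_0$, since $\int_{\BbbR}W(\gamma)\d t=\int_{\BbbR}(\gamma')^2\d t=\int_\alpha^\beta\sqrt{W}\,\d s=c_0$. As $\gamma$ attains $\alpha,\beta$ only asymptotically, I would truncate it to $\gamma_T$, coinciding with $\gamma$ on $[-T,T]$ and joined to the constants $\alpha,\beta$ by short monotone segments, so that $\alpha\le\gamma_T\le\beta$, $\gamma_T\in W^{1,\infty}$, and $\int_{\BbbR}(W(\gamma_T)+(\gamma_T')^2)\d t\to 2c_0$ as $T\to\infty$. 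The candidate is then $\phi_\ve(x)=\gamma_T\!\big(d(x)/(\ve\sqrt{b(x)})\big)$ inside the tube, extended by $\alpha,\beta$ on the two exterior components, which matches continuously for $T$ fixed and $\ve$ small.

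The core is the energy estimate. Differentiating, $\nabla\phi_\ve=\ve^{-1}\gamma_T'\big(\nabla d/\sqrt b+d\,\nabla(b^{-1/2})\big)$, and since $\gamma_T'$ is supported where $|d|\lesssim\ve\sqrt{\Lambda}\,T=O(\ve)$, the second bracket term is $O(\ve)$; using $a(x,\nabla d/\sqrt b)=b^{-1}a(x,\nabla d)=1$ and continuity of $a$ gives $\ve\,a(x,\nabla\phi_\ve)=\ve^{-1}(\gamma_T')^2(1+O(\ve))$, so the energy density is $\ve^{-1}\big[W(\gamma_T)+(\gamma_T')^2\big](1+O(\ve))$. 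I would then integrate by the coarea formula $\int_\Omega(\cdot)|\nabla d|\d x=\int\big(\int_{\{d=r\}}(\cdot)\,\d\mathcal H^{n-1}\big)\d r$, using that the level sets $\{d=r\}$ have area $\int_{\pt D}(1+O(r))\d S$ (tubular Jacobian $1+O(r)$, with $r=O(\ve)$ on the support) and the substitution $t=r/(\ve\sqrt b)$, under which $\ve^{-1}$ and $\d r=\ve\sqrt b\,\d t$ cancel. This leaves $\int_{\pt D}\sqrt{b}\,\big(\int_{\BbbR}(W(\gamma_T)+(\gamma_T')^2)\d t\big)\d S\cdot(1+o(1))$, which tends to $2c_0\int_{\pt D}\sqrt{a(x,\bm\nu)}\,\d S$ upon sending $\ve\to0$ and then $T\to\infty$ by a diagonal argument.

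It remains to enforce the constraints and identify the obstacle. Boundedness $\alpha\le\phi_\ve\le\beta$ and $\phi_\ve\in W^{1,\infty}$ are built into $\gamma_T$ and the $C^2$ regularity of $d$; a.e. convergence $\phi_\ve\to\phi_0$ holds since for fixed $x\notin\pt D$ the argument $d(x)/(\ve\sqrt b)\to\pm\infty$. For the volume constraint \eqref{vol-con-1} I would insert an offset, $\phi_\ve^{s}(x)=\gamma_T\!\big((d(x)-s)/(\ve\sqrt b)\big)$: since $\int_\Omega\phi_\ve^{s}\d x$ is continuous and strictly monotone in $s$ and equals $m+o(1)$ at $s=0$ (dominated convergence, as the uncorrected $\phi_\ve\to\phi_0$ in $L^1$), there is $s_\ve\to0$ with $\int_\Omega\phi_\ve^{s_\ve}=m$, and the interface then rests on $\{d=s_\ve\}\to\pt D$, leaving the energy limit unchanged. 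I expect the main obstacle to be the anisotropy and inhomogeneity of $a(x,\bm\nu)$: one must verify rigorously that freezing the coefficient along each normal line and discarding the tangential part of $\nabla\phi_\ve$ costs only $o(1)$, i.e.\ that the full quadratic form $a(x,\nabla\phi_\ve)$ localises to the normal direction with the correct pointwise weight $b(x)$. The remaining work is routine bookkeeping of the truncation error (vanishing as $T\to\infty$), the tubular Jacobian ($1+O(\ve)$) and the volume correction, all of which disappear in the iterated limit.
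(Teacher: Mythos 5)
Your construction takes a genuinely different route from the paper's. You keep the Euclidean signed distance $d$ and absorb the anisotropy by rescaling the profile argument pointwise through $b(x)=a(x,\nabla d(x))$, whereas the paper builds a generalized signed distance $h^a$ solving the eikonal equation $a(x,\nabla h^a)=1$ by the method of characteristics, so that equipartition $\ve\, a(x,\nabla\phi_\ve)=\ve^{-1}(W(\phi_\ve)+\ve)$ holds \emph{exactly} and the weight $|a(x,\bm\nu)|^{1/2}$ emerges as $1/|\nabla h^a|$ on $\pt D$, with no freezing error to control. Your route is closer to the classical Modica--Mortola recovery construction and trades the PDE step for the $o(1)$ bookkeeping you identify (freezing $b$ along normal lines, the tubular Jacobian $1+O(\ve)$, the truncation parameter $T$ with a diagonal limit); once the coefficients are smooth, that bookkeeping does close, because the transition layer has width $O(\ve)$ and $b$ is uniformly continuous on the tube, so your identification of the freezing error as the main obstacle is accurate but not fatal.

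There is, however, a genuine gap that the proposal never addresses: the lemma assumes only that $\vb A=((a^{ij}))$ is \emph{continuous}, while your computation of $\nabla\phi_\ve$ differentiates $b^{-1/2}$, i.e.\ it requires $a^{ij}\in C^1$. For merely continuous coefficients, $b(x)=a^{ij}(x)\,\pt_i d\,\pt_j d$ need not be differentiable anywhere, so $\phi_\ve(x)=\gamma_T\bigl(d(x)/(\ve\sqrt{b(x)})\bigr)$ need not lie in $W^{1,\infty}(\Omega)$ at all, and the expansion $\ve\, a(x,\nabla\phi_\ve)=\ve^{-1}(\gamma_T')^2(1+O(\ve))$ --- whose error term is controlled by $\|\nabla (b^{-1/2})\|_{L^\infty}$ on the layer --- is meaningless. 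Freezing $b$ at the nearest-point projection does not repair this, since differentiating $b\circ\pi$ again requires $\nabla b$. This is precisely why the paper's proof contains a separate Step 4: it first proves the statement for smooth $a^{ij}$, then approximates a continuous $\vb A$ uniformly from above by smooth matrices $\vb A_k\succeq\vb A$ with the same ellipticity constants, and extracts a diagonal sequence $\phi_\ve=\phi^k_\ve$, using the comparison $L^{a}_\ve\le L^{a_k}_\ve$ together with convergence of $\phi^k_\ve\to\phi_0$ that is uniform in $k$. Your argument becomes correct if you prepend the same reduction (prove your estimate for smooth majorants of $\vb A$, then diagonalize), but as written it establishes the lemma only under a strictly stronger hypothesis than the one claimed.
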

\begin{remark} \label{pg:chgenrm}
We make a few remarks on Lemma \ref{gamma-limsup}.
\begin{itemize}
\item The lemma is stated in exactly the same fashion as in \cite[Prop. 2]{modica_gradient_1987}, where the same properties are derived for the degenerate case $a^{ij}(x,p)\equiv |p|^2$ and \eqref{cahn-hill-gen} is reduced to the classical Van der Waals-Cahn-Hilliard energy. 
\item Similar discussions on generalized Van der Waals-Cahn-Hilliard energy functionals can be found in e.g. \cite{ansini_gradient_2003,bouchitte_singular_1990}, where the $\Gamma$-limit of functionals in the form of singular perturbation on the gradient is studied in detail.
\item The theory of Lemma \ref{gamma-limsup} can be applied to our model with $\alpha=1,\beta=0,W(s)=s^2(1-s)^2$, $a(x,\phi_\ve)=a_{\vb Q}(\phi_\ve)$ and $\phi_0=I_D$. When $\alpha>\beta$, one can set $\tilde W(s)=W(-s)$ and apply the theory to $\{-\phi_\ve\}$, with $\tilde W$ substituted for $W$.
\end{itemize}
\end{remark}

We present a constructive proof of Lemma \ref{gamma-limsup}.
A useful insight into the constuction of the sequence $\{\phi_\ve\}$ (also named the \textit{recovery sequence}) is the property of \textit{equipartition of energy} in energy functionals of Van der Waals-Cahn-Hilliard form \cite{dai_convergence_2018}.
It states that if the upper bound is to hold, then it must hold a.e. that
\[\ve^{-1}W(\phi_\ve) \approx \ve a(x,\nabla\phi_\ve),\]
i.e. the double-well potential and the gradient term contribute approximately equally to the overall energy $L^a_\ve$ \eqref{cahn-hill-gen}.
The equipartition property motivates us to construct the recovery sequence.

\begin{proof}[Proof of Lemma \ref{gamma-limsup}]

\textbf{Step 1. Generalized signed distance function.} \label{pg:chgen-pf}
First, we construct (locally) a Lipschitz function $h^a$ satisfying $a(x,\nabla h^a)=1,$ a.e..
$h^a$ is essentially the solution to the following first-order nonlinear partial differential equation (PDE)
\begin{equation} \label{eik-gen}
    \begin{cases}
        a^{ij}(x) u_{x_i}u_{x_j} = 1, & x\in\Omega\\
        u=0, & x\in \pt D,
    \end{cases}
\end{equation}
which can be solved with the method of characteristics. The solution is a generalized signed distance function (SDF).

Assume for the moment that $a^{ij}(x)\in C^\infty(\overline{\Omega})$. Then by \cite[sect. 3.2]{evans_partial_2010} (with $F(x,u,p)=a(x,p)-1$), the solution to \eqref{eik-gen} is given by the \textit{characteristic curves} $(\bm X(s), U(s), \bm P(s))$ determined by the dynamical system
\begin{equation} \label{eik-moc}
    \begin{cases}
        \dot X_i = -\pt_{p_i} F(\bm X,U,\bm P)=2a^{ij}(\bm X)P_j, \\
        \dot U =\nabla_p F(\bm X,U,\bm P)\cdot\bm P= 2a^{ij}(\bm X)P_i P_j, \\
        \dot P_i =-\pt_{x_i} F(\bm X,U,\bm P)-P_i \pt_uF(\bm X,U,\bm P) = - a^{jk}_{x_i}(\bm X) P_j P_k,
    \end{cases}
    \quad \qty(\dot{(*)} = \frac{\pt(*)}{\pt s})
\end{equation}
(Einstein's summation convention is used) with initial values
\begin{equation} \label{eik-moc-ic}
    \begin{cases}
        \bm X(0)=y\in \pt D, \\
        U(0)=0, \\
        \bm P(0)=|a(y,\bm\nu)|^{-\frac12}\bm\nu.
    \end{cases}
\end{equation}
The curve $\bm X(s)$ originates from the surface of boundary conditions $\pt D$, and the functions $U(s),\bm P(s)$ portray respectively the evolution of $h^a(x)$ and $\nabla h^a(x)$ along the curves.
The initial values \eqref{eik-moc-ic} are designed to be compatible with the boundary condition $h^a=0$ and $a(x,\nabla h^a)=1$.


With the theory of first-order nonlinear PDE's, we assert that the system \eqref{eik-moc} solves \eqref{eik-gen} correctly in a neighbourhood $V\supset \pt D$, with $U(s), \bm P(s)$ standing for the respective value of $h^a, \nabla h^a$ at the point $\bm X(s)$.
\begin{lemma} \label{moc-solves-eik}
Assume that $\pt D$ is compact and $C^2$ smooth.
Then, there exists an open interval $I$ containing 0 such that: 
\begin{enumerate}[\rm(a)]
    \item For each $y\in \pt D$ and $s\in I$, a unique $C^1$ characteristic curve $(\bm X(s),U(s),\bm P(s))$ exists with the initial conditions \eqref{eik-moc-ic}.
    \item The mapping
    \begin{align*}
        I\times\pt D&\to \Omega\times \BbbR \times \BbbR^n \\
        (s,y)&\mapsto(\bm X,U,\bm P)
    \end{align*}
    is $C^1$ with respect to $s\in I$ and $y\in\pt D$.
    \item The mapping $(s,y)\mapsto \bm X(s;y)$ for $s\in I,y\in\pt D$ is bijective, and its inverse is also $C^1$, i.e. there exists $C^1$ inverse mappings $s=s(x),y=y(x)$.
    \item On the open region containing $D$ defined by
    \begin{equation} \label{moc-domain}
        V=\left\{\bm X(s;y): s\in I, y\in \pt D\right\},
    \end{equation}
    the function
    \begin{equation} \label{gsdf}
        h^a(x) = U(s(x); y(x))
    \end{equation}
    is a $C^1$ classical solution to the equation \eqref{eik-gen}, with $\nabla h^a(x)=\bm P(s(x);y(x))$.
\end{enumerate}
\end{lemma}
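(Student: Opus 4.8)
The plan is to recognize Lemma \ref{moc-solves-eik} as the classical local existence theorem for the noncharacteristic Cauchy problem of a fully nonlinear first-order PDE (cf.\ \cite[\S3.2]{evans_partial_2010}), applied to $F(x,u,p)=a(x,p)-1$, and to verify that its geometric hypotheses hold \emph{uniformly} along the compact boundary $\pt D$. First I would confirm that the initial data \eqref{eik-moc-ic} are admissible: on one hand $F(y,0,\bm P(0))=a(y,\bm P(0))-1=0$, because $\bm P(0)=|a(y,\bm\nu)|^{-\frac12}\bm\nu$ gives $a(y,\bm P(0))=|a(y,\bm\nu)|^{-1}a(y,\bm\nu)=1$; on the other hand, since the boundary datum $u\equiv 0$ has vanishing tangential gradient, $\bm P(0)$ must be normal to $\pt D$, which is exactly the choice $\bm P(0)\parallel\bm\nu$. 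These two conditions pin down $\bm P(0)$ up to sign.

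The single model-specific step is the noncharacteristic condition: the characteristic velocity $\dot{\bm X}(0)=2\vb A(y)\bm P(0)$ must be transversal to $\pt D$. Its normal component is
\[\dot{\bm X}(0)\cdot\bm\nu = 2|a(y,\bm\nu)|^{-\frac12}\,\vb A(y)\bm\nu\cdot\bm\nu = 2|a(y,\bm\nu)|^{-\frac12}a(y,\bm\nu) = 2\sqrt{a(y,\bm\nu)} \ge 2\sqrt{\lambda}>0,\]
using the uniform ellipticity \eqref{uni-ell}; crucially this transversality is uniform over the compact set $\pt D$. Parts (a) and (b) then follow from Picard--Lindel\"of existence/uniqueness together with the $C^1$ dependence of ODE solutions on initial data and parameters, where the $C^2$ regularity of $\pt D$ guarantees $\bm\nu\in C^1$ so that the data $(y,0,\bm P(0))$ depend $C^1$ on $y$, and compactness of $\pt D$ provides a single interval $I$.

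For part (c) I would apply the inverse function theorem to $\Psi(s,y)=\bm X(s;y)$. At $s=0$, in local coordinates on $\pt D$ the partials $\pt_{y_j}\bm X(0;y)$ span $T_y\pt D$, while $\pt_s\bm X(0;y)$ has nonzero normal component by the transversality above, so the Jacobian of $\Psi$ is invertible at every $(0,y)$ and $\Psi$ is a local $C^1$ diffeomorphism near $\pt D\times\{0\}$. Global injectivity on a uniform $I\times\pt D$ follows from a compactness (collar) argument: otherwise one extracts distinct sequences with a common image converging to points $(0,y_\ast),(0,y_\ast')$ with $y_\ast=y_\ast'$, contradicting local injectivity. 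Shrinking $I$ yields the $C^1$ diffeomorphism onto the open set $V$ of \eqref{moc-domain}, with $C^1$ inverse $x\mapsto(s(x),y(x))$.

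For part (d) I would verify $\nabla h^a=\bm P$ through two identities along the characteristics. The first, $\pt_sU=\bm P\cdot\pt_s\bm X$, is immediate from the ODEs since $\dot U=2a^{ij}P_iP_j=\bm P\cdot\dot{\bm X}$. The second, $\pt_{y_j}U=\bm P\cdot\pt_{y_j}\bm X$, is the technical heart: setting $r_j(s)=\pt_{y_j}U-\bm P\cdot\pt_{y_j}\bm X$, one checks $r_j(0)=0$ (since $U(0)\equiv 0$ forces $\pt_{y_j}U(0)=0$, while $\bm P(0)\parallel\bm\nu$ is orthogonal to the tangent vectors $\pt_{y_j}\bm X(0;y)\in T_y\pt D$) and, differentiating in $s$ and invoking the characteristic system \eqref{eik-moc}, that $\dot r_j$ is a linear multiple of $r_j$, so $r_j\equiv 0$ by Gr\"onwall/ODE uniqueness. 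Combining both identities with the chain rule and the invertibility of the Jacobian of $\Psi$ forces $\nabla h^a=\bm P$; since $F=a(\bm X,\bm P)-1$ is conserved along characteristics (one verifies $\tfrac{d}{ds}F=0$ directly) and vanishes at $s=0$, we obtain $a(x,\nabla h^a)=1$, while $h^a=0$ on $\pt D$ is clear from $U(0)\equiv 0$. I expect this transversal identity $\pt_{y_j}U=\bm P\cdot\pt_{y_j}\bm X$ to be the main obstacle, as it is the sole place where the characteristic construction must be shown \emph{consistent}---that the $\bm P$ read off along the flow is genuinely the spatial gradient of $h^a$---whereas everything else reduces to the general theory once the uniform noncharacteristic condition is secured.
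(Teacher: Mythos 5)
Your proposal is correct and follows essentially the same route as the paper: the paper does not prove Lemma \ref{moc-solves-eik} in detail but asserts it as an application of the noncharacteristic local existence theory of \cite[sect. 3.2]{evans_partial_2010} with $F(x,u,p)=a(x,p)-1$, which is precisely the theorem you unpack (admissibility of the data \eqref{eik-moc-ic}, the uniform noncharacteristic condition $\dot{\bm X}(0)\cdot\bm\nu=2\sqrt{a(y,\bm\nu)}\ge 2\sqrt{\lambda}$, the inverse function theorem with a compactness argument for a uniform collar, and the straightening identity $\pt_{y_j}U=\bm P\cdot\pt_{y_j}\bm X$ via Gr\"onwall). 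Your verification of the transversality and compatibility hypotheses, which the paper leaves implicit in the design of \eqref{eik-moc-ic}, is accurate and supplies exactly the details the citation covers.
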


We can also make the following observations on the solution $h^a(x)$ \eqref{gsdf} restricted to $V$ defined by \eqref{moc-domain}.
It states that the sign of $h^a(x)$ determines which side of $\pt D$ the point $x$ is on, a property shared by the classical SDF.
\begin{lemma} \label{eik-moc-prop}
Assume the same as Lemma \ref{moc-solves-eik}.
Then for any $x\in V$, the position of $x$ relative to $D$ is determined by the sign of $h^a(x)$. Specifically,
\[\begin{cases}
    x\in D, & h^a(x)<0, \\
    x\in\pt D, & h^a(x)=0, \\
    x\in (\overline{D})^c, &h^a(x)>0.
\end{cases}\]
\end{lemma}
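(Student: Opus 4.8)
The plan is to transport everything onto the characteristic parametrization supplied by Lemma \ref{moc-solves-eik} and then read off the sign of $h^a$ from the monotone evolution of $U$ along characteristics. By Lemma \ref{moc-solves-eik}(c)--(d) every $x\in V$ is written uniquely as $x=\bm X(s(x);y(x))$ with $(s(x),y(x))\in I\times\pt D$, and $h^a(x)=U(s(x);y(x))$. Thus it suffices to understand two things as functions of the single parameter $s$: the sign of $U(s)$, and which side of $\pt D$ the point $\bm X(s;y)$ lies on. Since the map $(s,y)\mapsto\bm X(s;y)$ is a bijection onto $V$, matching these two will immediately turn each of the three implications into an equivalence.

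First I would show that $a(\bm X,\bm P)\equiv 1$ along every characteristic. The quantity $F(x,p)=a(x,p)-1$ is a first integral of the system \eqref{eik-moc} (differentiating $a(\bm X,\bm P)$ and substituting the right-hand sides of \eqref{eik-moc} gives two terms that cancel by the symmetry of $\vb A$ and of the triple product $P_iP_jP_k$), and the initial condition \eqref{eik-moc-ic} gives $F(\bm X(0),\bm P(0))=|a(y,\bm\nu)|^{-1}a(y,\bm\nu)-1=0$. Feeding $a(\bm X,\bm P)\equiv 1$ into the $U$-equation of \eqref{eik-moc} yields $\dot U=2a^{ij}(\bm X)P_iP_j\equiv 2>0$, so $U(s)=2s$ and in particular $\sgn h^a(x)=\sgn s(x)$. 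This reduces the whole lemma to identifying the geometric side of $\pt D$ with the sign of the characteristic time $s$.

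Next I would pin down the side near $\pt D$ by transversality. At $s=0$ the curve leaves $y\in\pt D$ with velocity $\dot{\bm X}(0)=2\vb A(y)\bm P(0)$, whose normal component is $\dot{\bm X}(0)\cdot\bm\nu=2|a(y,\bm\nu)|^{-1/2}a^{ij}(y)\nu_i\nu_j=2|a(y,\bm\nu)|^{1/2}>0$ by the uniform ellipticity \eqref{uni-ell}. Hence for small $s>0$ the point $\bm X(s;y)$ steps in the direction of the outward normal and lies in $(\overline{D})^c$, while for small $s<0$ it lies in $D$; crucially this sign is the \emph{same} for every $y\in\pt D$, since $a(y,\bm\nu)>0$ throughout.

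The remaining and main step is to upgrade this one-sided local picture to all of $V$, where the work really lies. Here I would invoke the injectivity in Lemma \ref{moc-solves-eik}(c): if $\bm X(s_1;y_0)\in\pt D$ for some $s_1\neq 0$, it would equal $\bm X(0;y_1)$ for some $y_1$, contradicting injectivity; so each half-characteristic $\{\bm X(s;y):s\in I,\ s>0\}$ is a connected set disjoint from $\pt D$, and since it starts in $(\overline{D})^c$ it must remain there, with the symmetric statement for $s<0$ and $D$. Combining with $\sgn h^a=\sgn s$ gives the three cases, and the bijectivity of the flow map makes each implication an equivalence. The expected obstacle is precisely this globalization: transversality only controls a neighbourhood of $\pt D$, so one must combine the non-recrossing property (from injectivity) with the connectedness of $D$ and of $(\overline{D})^c$ to conclude that the sign of $s$, and hence of $h^a$, determines the side throughout $V$.
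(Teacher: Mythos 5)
Your proposal is correct and follows essentially the same route as the paper: establish $a(\bm X,\bm P)\equiv 1$ so that $\dot U=2$ and $h^a=2s$, use transversality $\dot{\bm X}(0)\cdot\bm\nu>0$ from ellipticity to fix the side for small $|s|$, and rule out re-crossings of $\pt D$ to globalize. The only cosmetic differences are that you verify $a(\bm X,\bm P)\equiv 1$ by a direct first-integral computation on the system \eqref{eik-moc} (the paper instead reads it off from Lemma \ref{moc-solves-eik}, since $\bm P(s)=\nabla h^a(\bm X(s))$ and $h^a$ solves \eqref{eik-gen}), and you spell out the non-recrossing step via injectivity of the flow map plus connectedness of the half-characteristic, which the paper compresses into ``because the solution is unique.''
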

\begin{proof}
Since $U(s)=h^a(\bm X(s)), \bm P(s)=\nabla h^a(\bm X(s))$ solves \eqref{eik-gen} by Lemma \ref{moc-solves-eik},
\[a(\bm X(s),\bm P(s))\equiv 1\]
for all $s\in I$.
Therefore, we notice from the characteristic equation \eqref{eik-moc} that $\dot U(s)=2a(\bm X,\bm P)=2,$
which indicates 
\begin{equation} \label{U-eq-2s}
    h^a(x)=U(s(x))=2s.
\end{equation}

By \eqref{eik-moc-ic}, the initial values satisfy $\dot{\bm X}(0)=2a^{ij}(y)P_j(0), \bm P(0)=c \bm\nu$ ($c>0$), so
\[\dot{\bm X}(0)\cdot\bm\nu =c 2a^{ij}(y) \nu_i\nu_j>0 \]
since $a^{ij}$ satisfies the elliptic condition \eqref{uni-ell}. As $\bm\nu$ is the exterior normal vector, the characteristic curve points outside the region $D$ as $s$ increases. There is also no second crossing of the curve $\bm X(s)$ with $\pt D$ when $s\neq 0$ because the solution is unique.
Therefore, $s>0$ corresponds to the outer side of $D$ and $s<0$ corresponds to the inner side.
By \eqref{U-eq-2s}, the signs of $h^a$ and $s(x)$ are identical, so we yield Lemma \ref{eik-moc-prop}.
\end{proof}

The following corollary characterizes level sets of $h^a$, which are local isomorphisms of the boundary $\pt D$.
\begin{corollary} \label{gsdf-level-set}
Use the same assumptions and notations as Lemma \ref{moc-solves-eik}.
The level sets $\Sigma_t=\{h^a=t\}$ of $h^a(x)$ satisfies the following properties.
\begin{enumerate}[\rm(a)]
    \item For all $t$ with $\frac{t}{2}\in I$, $\Sigma_t$ is a $C^1$ hypersurface.
    \item For any continuous function $g(x):\Omega\to \BbbR$, the integral
    \[\int_{\Sigma_t} g(x)\d S(x)\]
    is continuous with respect to $t$. Specifically, when $g(x)\equiv 1$, we get the continuity of $\mathcal{H}_{n-1}(\Sigma_t)$ with respect to $t$.
\end{enumerate}
\end{corollary}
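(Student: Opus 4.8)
The plan is to derive both properties from the two facts already established about $h^a$ on the region $V$ of Lemma \ref{moc-solves-eik}: that $h^a\in C^1(V)$ with $\nabla h^a(x)=\bm P(s(x);y(x))$, and that the map $(s,y)\mapsto \bm X(s;y)$ is a $C^1$ diffeomorphism between $I\times\pt D$ and $V$. The key preliminary observation, used throughout, is that $\nabla h^a$ never vanishes: since $h^a$ solves \eqref{eik-gen} we have $a(x,\nabla h^a)=1$ everywhere on $V$, and the uniform ellipticity \eqref{uni-ell} gives $1=a(x,\nabla h^a)\le \Lambda|\nabla h^a|^2$, so that $|\nabla h^a|\ge \Lambda^{-1/2}>0$ pointwise.

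For part (a) I would invoke the regular value theorem. Because $\nabla h^a\neq\bm 0$ on all of $V$, every $t$ in the range of $h^a$ is a regular value of the $C^1$ function $h^a$; by the implicit function theorem the level set $\Sigma_t=(h^a)^{-1}(t)$ is then a $C^1$ embedded hypersurface. The identity $h^a(x)=2s(x)$ (cf. \eqref{U-eq-2s}) shows that the range of $h^a$ is exactly $2I$, so $\Sigma_t$ is nonempty and contained in $V$ precisely when $t/2\in I$, which is the stated hypothesis. Equivalently, one may note directly that $\Sigma_t=\{\bm X(t/2;y):y\in\pt D\}$ is the image of the compact $C^1$ hypersurface $\pt D$ under the $C^1$ diffeomorphism $y\mapsto \bm X(t/2;y)$.

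For part (b) I would parametrize each $\Sigma_t$ by $\pt D$ through $y\mapsto \bm X(t/2;y)$ and transform the surface integral into a fixed integral over $\pt D$. Covering $\pt D$ by finitely many $C^1$ charts $\psi_j$ with a subordinate partition of unity $\{\eta_j\}$, the surface measure on $\Sigma_t$ pulls back to $\sqrt{\det G(t/2,u)}\,\d u$, where $G$ is the Gram matrix with entries $\pt_{u_\alpha}\bm X\cdot\pt_{u_\beta}\bm X$ of the parametrization $(s,u)\mapsto \bm X(s;\psi_j(u))$. This yields
\begin{equation*}
    \int_{\Sigma_t} g\,\d S = \sum_j \int_{U_j} (\eta_j g)\big(\bm X(t/2;\psi_j(u))\big)\,\sqrt{\det G(t/2,u)}\,\d u .
\end{equation*}
Since $\bm X$ is $C^1$ in $(s,y)$, the tangent vectors $\pt_{u_\alpha}\bm X$, and hence $\sqrt{\det G}$, depend continuously on $(s,u)$; together with the continuity of $g$ this makes each integrand jointly continuous in $(t,u)$ and uniformly bounded on the compact domains $\overline{U_j}$. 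Continuity of $t\mapsto\int_{\Sigma_t}g\,\d S$ then follows from the dominated convergence theorem, and taking $g\equiv 1$ gives the continuity of $\mathcal{H}_{n-1}(\Sigma_t)$.

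The main obstacle is the rigorous bookkeeping in part (b): verifying that the change of variables for the $(n-1)$-dimensional surface measure is valid with only $C^1$ regularity, so that $\sqrt{\det G}$ is merely continuous rather than smooth, and confirming that the Gram determinant stays bounded away from degeneracy uniformly as $t$ ranges over a compact subinterval. The latter is guaranteed because $y\mapsto \bm X(s;y)$ is an immersion for every fixed $s$, a consequence of the diffeomorphism property in Lemma \ref{moc-solves-eik}(c). Once the parametrization and its continuous tangential Jacobian are in hand, the interchange of limit and integral is routine.
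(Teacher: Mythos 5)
Your proof is correct and takes essentially the same approach as the paper: the paper's (very terse) proof likewise uses $h^a(x)=2s(x)$ to identify $\Sigma_t=\{\bm X(t/2;y):y\in\pt D\}$ as a section of the $C^1$ mapping from Lemma \ref{moc-solves-eik}(c) and declares that both assertions follow from this $C^1$ parametrization. Your regular-value argument for (a) and the chart/Gram-determinant computation with dominated convergence for (b) simply supply the details that the paper compresses into ``follow immediately.''
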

\begin{proof}
Recall from \eqref{U-eq-2s} that $U(s)=2s$, so the level set $h^a=t$ is expressed equivalently as
\[h^a(x)=t \Leftrightarrow s(x)=\frac{t}{2}.\]
That is, as long as $\frac{t}{2}\in I$, $\Sigma_t$ is a section of the $C^1$ bijective mapping $(s,y)\mapsto \bm X(s;y)$ (by Lemma \ref{moc-solves-eik}(c)) at $s=\frac{t}{2}$:
\begin{equation} \label{Sigma-t}
    \Sigma_t = \{\bm X(t/2;y): y\in\pt D\}.
\end{equation}
Therefore, $s=\frac{t}{2}$ is a $C^1$ parametrization of the surface family $\{\Sigma_t\}_{t}$, and the assertions of Corollary \ref{gsdf-level-set} follow immediately.
\end{proof}


\textbf{Step 2. Approximate phase transition sequence.}
Next, we construct the approximate phase transition sequence $\{\phi_\ve\}$ over $\Omega$, such that $\phi_\ve\to \phi_0$ a.e. as $\ve\to 0$.

We propose a function $\chi_\ve:\BbbR\to\BbbR$ as the solution to the following differential equation.
\begin{equation} \label{chi-eq}
    \begin{cases}
        \ve \chi_\ve'(t) = \sqrt{W(\chi_\ve(t))+\ve}, \\
        \chi_\ve(0)=\alpha.
    \end{cases}
\end{equation}
The function $\chi_\ve$, taken from \cite{modica_gradient_1987}, is intended to approximate the standing-wave solution $q(t)$ as $\ve\to 0$, which satisfies that $ q(-\infty)=\alpha, q(+\infty)=\beta$ and minimizes the one-dimensional Van der Waals-Cahn-Hilliard energy \cite{evans_phase_1992,yue_diffuse-interface_2004}.

The following properties of $\chi_\ve$ are evident.
\begin{lemma} \label{chi-prop}
The function $\chi_\ve$ satisfies that:
\begin{enumerate}[\rm(a)]
    \item $\chi_\ve$ is strictly increasing.
    \item There exists a number $\eta_\ve>0$ such that $\chi_\ve(\eta_\ve)=\beta$, i.e. $\chi_\ve$ changes from $\alpha$ to $\beta$ over the interval $[0,\eta_\ve]$.
    \item $\eta_\ve\to 0$ as $\ve\to 0$.
\end{enumerate}
\end{lemma}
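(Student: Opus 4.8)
The plan is to solve the initial value problem \eqref{chi-eq} explicitly by separation of variables, which yields a closed formula for $\eta_\ve$ from which all three assertions follow by inspection. The single observation that drives everything is that the right-hand side of the ODE is bounded away from zero: since $W\ge 0$ and $\ve>0$, we have $\sqrt{W(\chi_\ve(t))+\ve}\ge\sqrt{\ve}>0$ for every $t$.

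First I would dispatch part (a). The equation gives $\chi_\ve'(t)=\ve^{-1}\sqrt{W(\chi_\ve(t))+\ve}\ge\ve^{-1}\sqrt{\ve}>0$ wherever $\chi_\ve$ is defined, so $\chi_\ve$ is strictly increasing. This strict positivity is precisely what makes the subsequent separation of variables reversible.

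Next, for part (b) I would invert the relation. Because the integrand $s\mapsto(W(s)+\ve)^{-1/2}$ is continuous and strictly positive on the compact interval $[\alpha,\beta]$, the function
\[t(\chi)=\ve\int_\alpha^\chi\frac{\d s}{\sqrt{W(s)+\ve}}\]
is well-defined, $C^1$, and strictly increasing in $\chi$, and its inverse is exactly the unique solution $\chi_\ve$ of \eqref{chi-eq}. Setting $\chi=\beta$ then defines $\eta_\ve=t(\beta)=\ve\int_\alpha^\beta(W(s)+\ve)^{-1/2}\,\d s$, which is finite and satisfies $\chi_\ve(\eta_\ve)=\beta$. I would emphasize that this inversion is the one place where genuine care is needed, and hence the main obstacle: since $W$ is assumed only continuous (not Lipschitz), the Picard--Lindel\"of theorem does not apply directly, but the fact that the integrand is continuous and uniformly bounded by $\ve^{-1/2}$ lets me build the solution and verify its uniqueness through the explicit primitive $t(\chi)$, circumventing the regularity obstruction entirely.

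Finally, part (c) is an immediate estimate on the same formula. Using $W(s)\ge 0$ to bound the integrand by $\ve^{-1/2}$,
\[\eta_\ve=\ve\int_\alpha^\beta\frac{\d s}{\sqrt{W(s)+\ve}}\le\ve\cdot\frac{\beta-\alpha}{\sqrt{\ve}}=\sqrt{\ve}\,(\beta-\alpha),\]
which tends to $0$ as $\ve\to 0$, giving $\eta_\ve\to 0$. Apart from the low-regularity existence step flagged above, all the remaining computations are elementary.
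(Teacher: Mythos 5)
Your proof is correct. The paper offers no argument at all for this lemma (it declares the properties ``evident,'' with $\chi_\ve$ borrowed from \cite{modica_gradient_1987}), and your separation-of-variables argument --- inverting the ODE to get the explicit primitive $t(\chi)=\ve\int_\alpha^\chi (W(s)+\ve)^{-1/2}\,\d s$, reading off $\eta_\ve=t(\beta)$, and bounding $\eta_\ve\le\sqrt{\ve}\,(\beta-\alpha)$ --- is exactly the standard justification the authors are implicitly relying on. Your remark that the inversion sidesteps the failure of Picard--Lindel\"of (since $W$ is only assumed continuous, the right-hand side need not be Lipschitz, but it is bounded away from zero) is a genuine point of care that the paper glosses over entirely.
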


Since we are only interested in the phase transition from $\alpha$ to $\beta$, we truncate the function $\chi_\ve$ to the interval $[\alpha,\beta]$:
\begin{equation} \label{chi-trunc}
    \tilde\chi_\ve(t)=\min\left\{\beta, \max\{\alpha, \chi_\ve(t)\}\right\}=\begin{cases}
        \alpha, & t<0, \\
        \chi_\ve(t), & t\in[0,\eta_\ve], \\
        \beta, & t>\eta_\ve
    \end{cases}.
\end{equation}
$\tilde\chi_\ve$ is an increasing and Lipschitz continuous function $\BbbR\to[\alpha,\beta]$.
We also denote the sharp phase transition from $\alpha$ to $\beta$ by
\begin{equation} \label{chi0}
    \chi_0(t) = \begin{cases}
        \alpha, & t<0 \\ \beta, & t>0
    \end{cases}.
\end{equation}
A sketch of their graphs can be found in Figure \ref{chi-sketch}.
\begin{figure}[th]
    \centering
\begin{tikzpicture}[x=0.7cm,y=0.7cm]
    \draw[->,color=black!60!] (-3.5,0) -- (3.5,0) node[anchor=west]{$t$}; 
    \draw[->,color=black!60!] (0,-.5) -- (0,3); 
    \draw[color=black] (-3,0) -- (0,0) ..controls(.4,.3)and(.4,1.7).. (.8,2) -- (3,2) node[anchor=north west]{$\tilde\chi_\ve(t)$};
    \draw[very thick, dashed,color=red] (-2.5,0) -- (0,0) -- (0,2)-- (2.5,2) node[anchor=south west]{$\chi_0(t)$};
    \draw[dashed] (.8,2) -- (.8,0) node[anchor=north]{\small$\eta_\ve$};
    \draw[dashed] (0,0) node[anchor=north east]{\small$0$};
\end{tikzpicture}
\caption{Sketch of $\tilde\chi_\ve(t)$ compared with $\chi_0$} \label{chi-sketch}
\end{figure}


For each $\ve$ satisfying $\eta_\ve<\eta_*$, we define the phase transition sequence on $V$:
\begin{equation} \label{phi-eps}
    \phi_\ve(x) = \tilde\chi_\ve(h^a(x)+\delta_\ve)=\begin{cases}
        \chi_\ve(h^a(x)+\delta_\ve), & -\delta_\ve\le h^a(x)\le \eta_\ve-\delta_\ve, \\
        \alpha, & h^a(x)< -\delta_\ve, \\
        \beta, & h^a(x)> \eta_\ve-\delta_\ve,
    \end{cases}
\end{equation}
where $h^a$ is the generalized SDF defined by \eqref{gsdf}, $\tilde\chi_\ve$ is defined in \eqref{chi-trunc}, and $\delta_\ve\in[0,\eta_\ve]$ is yet to be determined.
The sharp phase transition $\phi_0= \alpha I_D+\beta I_{D^c}$ can also be expressed as
\begin{equation} \label{phi-0}
    \phi_0(x)=\chi_0(h^a(x))
\end{equation}
over $V$, where $\chi_0$ is defined in \eqref{chi0}, because the sign of $h^a$ in $V$ is determined by whether $x\in D$ or $x\notin D$.

We choose $\delta_\ve\in[0,\eta_\ve]$ to meet the volume constraint \eqref{vol-con-1} restricted to $V$.
\begin{lemma} \label{delta-exst}
There exists $\delta_\ve\in[0,\eta_\ve]$ such that 
\[\int_V \phi_\ve(x)\d x = \int_{V} \phi_0(x)\d x.\]
\end{lemma}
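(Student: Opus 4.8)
The plan is to reduce the claim to a one–dimensional intermediate value theorem for the enclosed volume, viewed as a function of the shift parameter $\delta$. First I would introduce
\[ g(\delta) = \int_V \tilde\chi_\ve(h^a(x) + \delta)\d x, \qquad \delta\in[0,\eta_\ve], \]
and record its two structural features. Since $V$ is a bounded neighbourhood of $\pt D$ (hence of finite measure) and $\tilde\chi_\ve$ takes values in the bounded interval $[\alpha,\beta]$, the integrand is uniformly bounded, so continuity of $g$ in $\delta$ follows from the dominated convergence theorem together with the continuity of $\tilde\chi_\ve$. Because $\tilde\chi_\ve$ is increasing (Lemma \ref{chi-prop}(a) and the truncation \eqref{chi-trunc}), the map $\delta\mapsto\tilde\chi_\ve(h^a(x)+\delta)$ is non-decreasing pointwise, and hence $g$ is non-decreasing on $[0,\eta_\ve]$.

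The crux is to trap the target value $\int_V\phi_0\d x$ between $g(0)$ and $g(\eta_\ve)$, which I would do by a pointwise comparison of integrands after splitting $V$ according to the sign of $h^a$; by Lemma \ref{eik-moc-prop} this sign records whether $x$ lies inside or outside $D$. At $\delta=0$, where $h^a<0$ both integrands equal $\alpha$ and where $h^a>\eta_\ve$ both equal $\beta$, while on the transition layer $0<h^a<\eta_\ve$ one has $\tilde\chi_\ve(h^a)\in(\alpha,\beta)$ against $\phi_0=\chi_0(h^a)=\beta$, so $\tilde\chi_\ve(h^a)\le\phi_0$ throughout and $g(0)\le\int_V\phi_0\d x$. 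At $\delta=\eta_\ve$ the transition layer shifts to $-\eta_\ve<h^a<0$, where $\tilde\chi_\ve(h^a+\eta_\ve)\in(\alpha,\beta)$ against $\phi_0=\alpha$, giving the reverse inequality $\tilde\chi_\ve(h^a+\eta_\ve)\ge\phi_0$, while the two functions agree off this layer; hence $g(\eta_\ve)\ge\int_V\phi_0\d x$.

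Combining the two endpoint estimates yields $g(0)\le\int_V\phi_0\d x\le g(\eta_\ve)$, and since $g$ is continuous the intermediate value theorem produces the desired $\delta_\ve\in[0,\eta_\ve]$ with $g(\delta_\ve)=\int_V\phi_0\d x$. The argument is essentially elementary, so I do not expect a serious obstacle; the only point requiring care is the endpoint comparison, where the monotone profile of $\tilde\chi_\ve$ must be matched against the jump of $\chi_0$ on the correct side of $\pt D$, which is precisely where the sign characterisation of $h^a$ is invoked. A minor subtlety worth flagging is that this comparison presupposes $\alpha<\beta$, the standing convention of Lemma \ref{gamma-limsup}; in the liquid-crystal application with the phase values reversed one applies the construction to $-\phi_\ve$ as in Remark \ref{pg:chgenrm}.
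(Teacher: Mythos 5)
Your proposal is correct and takes essentially the same route as the paper: both arguments sandwich $\int_V \phi_0\,\d x$ between $\int_V \tilde\chi_\ve(h^a)\,\d x$ and $\int_V \tilde\chi_\ve(h^a+\eta_\ve)\,\d x$ via the pointwise inequality $\tilde\chi_\ve(t)\le\chi_0(t)\le\tilde\chi_\ve(t+\eta_\ve)$, and then apply the intermediate value theorem to the continuous, monotone function $\delta\mapsto\int_V\tilde\chi_\ve(h^a+\delta)\,\d x$. Your write-up merely makes explicit what the paper treats as evident, namely the case analysis by the sign of $h^a$ behind the sandwich and the dominated-convergence justification of continuity.
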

\begin{proof}
Evidently,
\[\tilde\chi_\ve(t)\le \chi_0(t)\le \tilde\chi_\ve(t+\eta_\ve),\ \forall t\in\BbbR,\]
as visible from Figure \ref{chi-sketch}, so substituting $h^a(x)$ for $t$ and integrating gives us
\[\int_V \tilde\chi_\ve(h^a(x))\d x \le \int_V \chi_0(h^a(x))\d x=m \le \int_V\chi_\ve(h^a(x)+\eta_\ve)\d x,\]
where $m=\int_V \phi_0\d x$ is from the volume constraint \eqref{vol-con-1}.
Since the integral
\[\int_V \tilde\chi_\ve(h^a(x)+\delta)\d x\]
is continuous and monotonic with respect to $\delta$,
there exists $\delta_\ve \in [0,\eta_\ve]$ such that $\int_V \tilde\chi_\ve(h^a+\delta_\ve)\d x=m$ by the mean value theorem.
\end{proof}

We then extend $\phi_\ve$ to the entire $\Omega$.
\begin{lemma} \label{ext-phi}
Define
\begin{equation} \label{phi-eps-ext}
    \phi_\ve(x)=\begin{cases}
        \alpha, & x\in D\cap V^c, \\
        \beta, & x\in D^c\cap V^c,
    \end{cases}
\end{equation}
and then:
\begin{enumerate}[\rm(a)]
\item The extension \eqref{phi-eps-ext} is a Lipschitz function over $\Omega$.
\item $\phi_\ve$ satisfies the volume constraint \eqref{vol-con}.
\item $\phi_\ve\to \phi_0$ a.e., where $\phi_0=\alpha I_D+\beta I_{D^c}$.
\end{enumerate}
\end{lemma}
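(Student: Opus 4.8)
The plan is to establish the three assertions in the order (a), (b), (c), concentrating the real work on the Lipschitz gluing in (a) and obtaining (b) and (c) as short consequences of the collar construction. The geometric fact underpinning everything is that the only place where the interior prescription \eqref{phi-eps} is non-constant is the shell $T_\ve=\{x\in V:-\delta_\ve\le h^a(x)\le\eta_\ve-\delta_\ve\}$. Since $\delta_\ve\in[0,\eta_\ve]$ and $\eta_\ve\to0$ (Lemma \ref{chi-prop}(c)), this shell collapses onto $\{h^a=0\}=\pt D$ as $\ve\to0$. Because $h^a$ takes the form $h^a=2s$ with $s$ ranging over the fixed open interval $I$ (see \eqref{U-eq-2s} and \eqref{moc-domain}), so that $h^a(V)=2I$ is a fixed open interval independent of $\ve$ containing $0$, there is an $\eta_*>0$ such that for every $\ve$ with $\eta_\ve<\eta_*$ the closure $\overline{T_\ve}$ is a compact subset of $V$; moreover $\phi_\ve|_V\equiv\alpha$ on the portion of $V$ adjacent to $\pt V\cap D$ and $\phi_\ve|_V\equiv\beta$ on the portion adjacent to $\pt V\cap D^c$.

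For (a) I would first note that on $\overline{T_\ve}$ the function $\phi_\ve=\tilde\chi_\ve(h^a+\delta_\ve)$ is the composition of the Lipschitz function $\tilde\chi_\ve$ (Lipschitz by \eqref{chi-trunc}) with the $C^1$ --- hence locally Lipschitz --- function $h^a$ (Lemma \ref{moc-solves-eik}(d)), and that a $C^1$ function is Lipschitz on the compact set $\overline{T_\ve}\subset\subset V$; everywhere else $\phi_\ve$ is locally constant with value $\alpha$ or $\beta$. It then remains to check that \eqref{phi-eps} and \eqref{phi-eps-ext} agree across $\pt V$. Since $\pt D\subset V$, every point of $\pt V$ lies strictly inside or strictly outside $D$, so the sign of $h^a$ there is determined by Lemma \ref{eik-moc-prop}; by the previous paragraph $\phi_\ve|_V$ has already saturated to the constant extension value in a one-sided neighbourhood of $\pt V$, so the glued function has no jump. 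Being locally constant off the compact set $\overline{T_\ve}$ and Lipschitz on it, $\phi_\ve\in W^{1,\infty}(\Omega)$ for each fixed $\ve$ (with a constant that may blow up as $\ve\to0$, which is harmless).

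For (b), the extension \eqref{phi-eps-ext} is designed so that $\phi_\ve=\phi_0$ pointwise on $V^c$ (namely $\alpha=\phi_0$ on $D\cap V^c$ and $\beta=\phi_0$ on $D^c\cap V^c$), hence $\int_{V^c}\phi_\ve=\int_{V^c}\phi_0$; adding the identity $\int_V\phi_\ve=\int_V\phi_0$ furnished by Lemma \ref{delta-exst} gives $\int_\Omega\phi_\ve=\int_\Omega\phi_0=m$, where the last equality uses $|D|=(\beta|\Omega|-m)/(\beta-\alpha)$, so the constraint \eqref{vol-con-1} holds. For (c), convergence is exact on $V^c$; on $V$, fix $x$ with $h^a(x)\ne0$. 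If $h^a(x)<0$ then $\delta_\ve<|h^a(x)|$ for $\ve$ small, so $h^a(x)+\delta_\ve<0$ and $\phi_\ve(x)=\alpha=\phi_0(x)$ by \eqref{phi-0}; if $h^a(x)>0$ then $h^a(x)>\eta_\ve\ge\eta_\ve-\delta_\ve$ for $\ve$ small, so $\phi_\ve(x)=\beta=\phi_0(x)$. As $\{h^a=0\}=\pt D$ is Lebesgue-null, $\phi_\ve\to\phi_0$ a.e.

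The main obstacle is (a), and within it the compact containment $\overline{T_\ve}\subset\subset V$ together with the matching of boundary values on $\pt V$: the whole argument hinges on the transition shell fitting strictly inside the collar $V$ for small $\ve$, so that the interior profile has already reached the constants $\alpha$ and $\beta$ before meeting $\pt V$ and no discontinuity is created by the extension. By comparison, (b) is pure bookkeeping built on Lemma \ref{delta-exst}, and (c) is an elementary pointwise limit using $\eta_\ve,\delta_\ve\to0$ and the nullity of $\pt D$.
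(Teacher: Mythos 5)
Your proposal is correct and takes essentially the same approach as the paper: the heart of (a) is that the transition shell lies strictly inside the collar $V$ for $\eta_\ve<\eta_*$, so $\phi_\ve$ has already saturated to $\alpha$ resp.\ $\beta$ before reaching $\pt V$ and glues continuously with the extension (the paper phrases this by evaluating $h^a=\pm 2\eta_*$ on the two boundary patches $\Sigma_{\pm 2\eta_*}$ of $\pt V$), while your (b) is the same bookkeeping via Lemma \ref{delta-exst} and your (c) is the same pointwise limit off the Lebesgue-null set $\pt D$ using $\eta_\ve,\delta_\ve\to 0$ and Lemma \ref{eik-moc-prop}. No gaps.
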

\begin{proof}
(a) We need to prove the differentiability near the boundary $\pt V$.
Since $V$ is defined by images of the $C^1$ mapping $\bm X(s;y)$ with $s\in (-\eta_*,\eta_*)$, the boundary $\pt V$ are two patches of level set surfaces $\{s=\pm\eta_*\}$ (which are the level sets $\Sigma_{\pm 2\eta_*}=\{h^a=\pm 2\eta_*\}$ defined in \eqref{Sigma-t}).
On the positive patch $s=\eta_*$, $h^a(x)=2\eta_*>\eta_\ve\ge \eta_\ve-\delta_\ve$ by the assumption that $\eta_*>\eta_\ve$, so by \eqref{phi-eps} we have that $\phi_\ve|_{U\cap V}\equiv \beta$ for a small neighbourhood $U$ containing $x\in \Sigma_{2\eta_*}$ restricted to $V$; moreover, the value of $\phi_\ve$ on $U\cap V^c$ is also constantly $\beta$ by definition \eqref{phi-eps-ext}. Hence, the extended $\phi_\ve$ is differentiable (constant) near $x.$
The same argument applies to the negative patch $s=-\eta_*$.

(b) We notice that the values of $\phi_\ve$ and $\phi_0$ agree on $V^c$ by \eqref{phi-eps-ext} and \eqref{phi-0}, so $\int_{V^c} \phi_\ve(x)\d x = \int_{V^c} \phi_0(x)\d x.$
Meanwhile, Lemma \ref{delta-exst} states that $\int_V \phi_\ve(x)\d x=\int_V \phi_0(x)\d x.$
Adding them up gives us the volume constraint \eqref{vol-con} on the entire $\Omega$.

(c) It suffices to prove that $\phi_\ve\to \phi_0$ ($\ve\to 0$) whenever $x\notin \pt D$.
If $x\notin V$, then $\phi_\ve(x)\equiv \phi_0(x)$ by definition \eqref{phi-eps-ext}.
If $x\in V$ and $x\notin \pt D$, then $h^a(x)\neq 0$ by the uniqueness of characteristic curves.
Since $\eta_\ve\to 0$ and $0\le\delta_\ve\le \eta_\ve$ (by Lemma \ref{chi-prop} and Lemma \ref{delta-exst}), $h^a\notin(-\delta_\ve, \eta_\ve-\delta_\ve)$ when $\ve\to 0$. Therefore $\phi_\ve(x)=\phi_0(x)$ for sufficiently small $\ve$, and hence the convergence.
\end{proof}

\textbf{Step 3. Integration with coarea formula.}
Finally, we evaluate the functional \eqref{cahn-hill-gen} with the aid of the sequence $\phi_\ve$.
We utilize the following coarea formula for a Lipschitz function $h\in W^{1,\infty}(\Omega)$ and an integrable function $f\in L^1(\Omega)$ \cite{evans_partial_2010,evans_measure_2015,giusti_minimal_1984}, which translates volume integrals to surface integrals.
\begin{equation} \label{coarea}
    \int_{\Omega} f(x) |\nabla h|\d x=\int_{-\infty}^\infty \d r \int_{\{h=r\}\cap \Omega} f(x)\d S(x).
\end{equation}
The steps are identical to those of Modica \cite{modica_gradient_1987}.

By \eqref{phi-eps}, when $h^a\ge\eta_\ve-\delta_\ve$ or $h^a\le -\delta_\ve$, $\phi_\ve$ is constantly $\alpha$ or $\beta$, so both $W(\phi_\ve)$ and $\nabla\phi_\ve$ are zero.
Therefore, the integral \eqref{cahn-hill-gen} is supported on the ``strip'' $\{-\delta_\ve < h^a < \eta_\ve-\delta_\ve\},$ or equivalently, $\{\alpha<\phi_\ve<\beta\}.$
Using the differential equation \eqref{chi-eq} and the chain rule, we get
\[ \ve a(x,\nabla\phi_\ve) = \ve^{-1}(W(\phi_\ve)+\ve) a(x,\nabla h^a)=\ve^{-1}(W(\phi_\ve)+\ve).\]
Thus, we transcribe the integral into
\begin{align*}
    L_\ve^a(\phi_\ve)&=\int_{\{\alpha<\phi_\ve<\beta\}} \ve^{-1}(2W(\phi_\ve)+\ve)\d x \\
    &=\int_{-\delta_\ve}^{\eta_\ve-\delta_\ve} \d t \int_{\Sigma_t} \frac{\ve^{-1}}{|\nabla h^a|} (2W(\phi_\ve)+\ve) \d S(x),
\end{align*}
where the coarea formula \eqref{coarea} is applied to $h^a$.
Note that because of uniform ellipticity \eqref{uni-ell},
\[1=|a(x,\nabla h^a)|\le \Lambda |\nabla h^a|^2 \Rightarrow |\nabla h^a| \ge \Lambda^{-\frac12},\]
and thus $|\nabla h^a|$ can be placed on the denominator.
Substitute the definition \eqref{phi-eps} in the expression above, and extract the factor $W(\phi_\ve)+\ve$ from the surface integral since it is constant over that surface.
\begin{align*}
    L_\ve^a(\phi_\ve)
    &= \int_{-\delta_\ve}^{\eta_\ve-\delta_\ve} \d t \int_{\Sigma_t} \frac{2W(\tilde\chi_\ve(t+\delta_\ve))+\ve}{\ve|\nabla h^a|} \d S(x) \\
    &= \int_{0}^{\eta_\ve} \frac{2W(\chi_\ve(t))+\ve}{\ve} \d t \int_{\Sigma_{t-\delta_\ve}} \frac{1}{|\nabla h^a|} \d S(x).
\end{align*}
We have shifted the interval of $t$ by $\delta_\ve$.
As $\chi_\ve(t)$ is strictly increasing by Lemma \ref{chi-prop}(a), we make the change of variable $r=\chi_\ve(t)\in(\alpha,\beta)$, whose substitution formula is $\d r=\chi_\ve'(t)\d t=\ve^{-1}\sqrt{W(r)+\ve}\d t.$ Then,
\begin{equation} \label{pinkie}
    L_\ve^a(\phi_\ve) = \int_\alpha^\beta \frac{2W(r)+\ve}{\sqrt{W(r)+\ve}}\d r \int_{\Sigma_{\chi_\ve^{-1}(r)-\delta_\ve}}\frac{1}{|\nabla h^a|}\d S(x),
\end{equation}
where $\chi_\ve^{-1}(r): (\alpha,\beta)\to(0, \eta_\ve)$ is the inverse function of $\chi_\ve$.
By Lemma \ref{chi-prop}, $\eta_\ve\to 0$ and $\delta_\ve\to 0$ as $\ve\to 0$, so $\chi_\ve^{-1}(r)-\delta_\ve\to 0$ uniformly for all $r\in(\alpha,\beta)$. 
$\frac{1}{|\nabla h^a|}$ is also continuous over $V$ because $h^a\in C^1$.
Thus, by Lemma \ref{gsdf-level-set}(b) we can assert that
\[\lim_{\ve\to 0} \int_{\Sigma_{\chi_\ve^{-1}(r)-\delta_\ve}}\frac{1}{|\nabla h^a|}\d S(x) = \int_{\Sigma_0} \frac{1}{|\nabla h^a|}\d S(x)\]
uniformly for all $r\in(\alpha,\beta)$.
Moreover, $\frac{2W(r)+\ve}{\sqrt{W(r)+\ve}} \le 2\sqrt{W(r)+\ve}$ is uniformly bounded above by a continuous function for all $r\in[\alpha,\beta]$.
Therefore, we apply the dominated convergence theorem on \eqref{pinkie} by taking the limit $\ve\to 0$ under the integral sign, and get
\[\lim_{\ve\to 0} L_\ve^a(\phi_\ve)
= \int_\alpha^\beta 2\sqrt{W(r)}\d r \int_{\Sigma_0} \frac{1}{|\nabla h^a|}\d S(x). \]
Since $\Sigma_0$ is just $\pt D$, and since $|\nabla h^a(x)|=|\bm P(0;x)|=|a(x,\bm\nu)|^{-\frac12}$ ($x\in\pt D$) according to \eqref{eik-moc-ic},
\[\lim_{\ve\to 0} L_\ve^a(\phi_\ve) = 2c_0 \int_{\pt D} |a(x,\bm\nu)|^{\frac12}\d S(x),\]
matching the form in Lemma \ref{gamma-limsup}.

\textbf{Step 4. An approximation argument.}
All previous derivations are based on the assumption that $a^{ij}(x)\in C^\infty$.
For the general case $a^{ij}\in C^0$, we choose a sequence of smooth matrix functions $\vb A_k(x)=((a_k^{ij}(x)))\in C^\infty$ such that $\vb A_k\to \vb A$ uniformly and that $\vb A_k(x)\succeq \vb A(x),\forall x$. Denote the quadratic form associated with $\vb A_k$ by $a_k(x,\xi)=a_k^{ij}(x)\xi_i\xi_j.$
W.l.o.g., assume further that the smooth approximations $\{\vb A_k\}$ satisfy the uniform ellipticity conditions \eqref{uni-ell} with the same constants $\lambda,\Lambda$.

Before proceeding with the argument, we need to further characterize the behaviour of $h^a$.
\begin{lemma} \label{gsdf-ge-dist}
Under the assumptions of Lemma \ref{moc-solves-eik}, the solution $h^a(x)=U(s(x);y(x))$ satisfies that
\[\Lambda^{-1/2} \dist(x,\pt D) \le |h^a(x)| \le \lambda^{-1/2} \dist(x,\pt D),\quad \forall x\in V,\]
where $\Lambda$ is the constant from \eqref{uni-ell} and $\dist$ is the distance function.
\end{lemma}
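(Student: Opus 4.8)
The plan is to exploit the eikonal identity $a(x,\nabla h^a)=1$ together with the uniform ellipticity \eqref{uni-ell} to pin down $|\nabla h^a|$ pointwise and to read off $|h^a|$ as a Riemannian-type distance to $\pt D$. First I would record the two-sided gradient bound: since $\lambda|\nabla h^a|^2\le a(x,\nabla h^a)=1\le\Lambda|\nabla h^a|^2$ on $V$, we get $\Lambda^{-1/2}\le|\nabla h^a(x)|\le\lambda^{-1/2}$ for every $x\in V$. The geometric content of \eqref{uni-ell} is the equivalent statement $\Lambda^{-1}\vb I\preceq\vb A^{-1}\preceq\lambda^{-1}\vb I$, so that for every $v\in\BbbR^n$ one has $\Lambda^{-1/2}|v|\le\sqrt{v^\top\vb A^{-1}v}\le\lambda^{-1/2}|v|$; that is, the $\vb A^{-1}$-arclength of any curve is squeezed between $\Lambda^{-1/2}$ and $\lambda^{-1/2}$ times its Euclidean length. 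This single comparison drives both inequalities.

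For the lower bound I would use the characteristic curve through a given $x\in V$. Writing $x=\bm X(s_0;y_0)$ with $y_0\in\pt D$, we have $\dot{\bm X}=2\vb A(\bm X)\bm P$ from \eqref{eik-moc} and $a(\bm X,\bm P)\equiv1$, so the $\vb A^{-1}$-speed is $\sqrt{\dot{\bm X}^\top\vb A^{-1}\dot{\bm X}}=2$, matching $\dot U=2$ behind \eqref{U-eq-2s}; hence $|h^a(x)|$ equals the $\vb A^{-1}$-length of the characteristic arc joining $y_0$ to $x$. By the comparison above this length is at least $\Lambda^{-1/2}$ times the Euclidean length of the arc, and the Euclidean length of any curve from $x$ to $\pt D$ is at least $\dist(x,\pt D)$. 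Combining gives $|h^a(x)|\ge\Lambda^{-1/2}\dist(x,\pt D)$, valid for every $x\in V$.

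For the upper bound I would instead integrate the gradient along the straight Euclidean segment to the nearest boundary point. Let $p\in\pt D$ realise $\dist(x,\pt D)=|x-p|$; assuming $[p,x]\subseteq V$, the fundamental theorem of calculus and the gradient bound give $|h^a(x)|=|h^a(x)-h^a(p)|\le\lambda^{-1/2}|x-p|=\lambda^{-1/2}\dist(x,\pt D)$. Equivalently, a Cauchy--Schwarz estimate $\nabla h^a\cdot(x-p)\le\sqrt{a(\cdot,\nabla h^a)}\,\sqrt{(x-p)^\top\vb A^{-1}(x-p)}$ bounds $|h^a(x)|$ by the $\vb A^{-1}$-length of the segment, hence again by $\lambda^{-1/2}\dist(x,\pt D)$. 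The segment lies in $V$ once $x$ sits in a genuine tubular neighbourhood of $\pt D$ on which the nearest-point projection is single-valued: since $\pt D$ is compact and $C^2$ it has positive reach, and since the open set $V$ contains $\pt D$ it contains some tube $\{\dist(\cdot,\pt D)<\rho'\}$ within that reach. Restricting to this tube --- which is all that Step 3 needs, since the transition layer $\{-\delta_\ve<h^a<\eta_\ve-\delta_\ve\}$ collapses onto $\pt D$ as $\ve\to0$ by the already-proven lower bound --- makes the argument rigorous.

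The main obstacle is precisely this containment in the upper bound: the characteristics leave $\pt D$ in the anisotropic conormal direction $\vb A(y)\bm\nu$ rather than the Euclidean normal $\bm\nu$, so $V$ is a ``tilted'' tube and the Euclidean nearest-point segment need not follow a characteristic nor stay inside $V$ for points deep in $V$. The lower bound sidesteps this because it runs along the characteristic arc (guaranteed to lie in $V$) and uses only the easy direction $\ell_{\mathrm{Eucl}}\ge\dist(x,\pt D)$, whereas the upper bound genuinely needs a short, near-normal path and hence the tubular-neighbourhood reduction. I would therefore either state the conclusion on the smaller tube $\{\dist(\cdot,\pt D)<\rho'\}\subseteq V$, which suffices for Step 4, or invoke positive reach to ensure that normal segments of length $<\rho'$ remain inside $V$.
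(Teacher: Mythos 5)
Your proof is correct and follows essentially the same route as the paper: the two-sided gradient bound from ellipticity plus the eikonal identity, integration along the straight segment to a nearest boundary point for the upper bound, and integration along the characteristic arc for the lower bound (your $\vb A^{-1}$-arclength computation is exactly the paper's estimate $|\dot{\bm X}|\le 2\Lambda^{1/2}$ combined with $U(s)=2s$, in Riemannian dress). The one genuine difference is that the paper's upper-bound step silently takes the infimum over $y\in\pt D$ without verifying that the minimizing segments lie in $V$ --- the gap you flag --- and your tube fix is valid but can be simplified: no positive reach or single-valued projection is needed, since for any $z\in[p,x]$ one has $\dist(z,\pt D)\le|z-p|\le|x-p|=\dist(x,\pt D)$, so as soon as $x$ lies in a Euclidean tube $\{\dist(\cdot,\pt D)<\rho'\}\subseteq V$ (which exists by compactness of $\pt D$ and openness of $V$), the whole segment does too. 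Your closing observation is also on target: only the lower bound, which holds on all of $V$ without any such caveat, is used later (Lemma \ref{phikeps-uniform}), so restricting the upper inequality to the tube costs nothing in the application.
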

\begin{proof}
By \eqref{uni-ell} and the fact that $a(x,\nabla h^a)=1$, we find that
\[ \Lambda^{-1/2}\le|\nabla h^a|\le \lambda^{-1/2}.\]
For any line segment in $V$ connecting $y\in \pt D$ and $x$,
\[|h^a(x)|=\left|\int_y^x \nabla h^a\cdot\d\bm l \right| \le \lambda^{-1/2} \int_y^x |\d\bm l|=\lambda^{-1/2}|x-y|.\]
by the triangular inequality of line integrals. Taking the infimum over $y\in \pt D$ leads to the right inequality.

For each $x\in V$, denote $s_0=s(x)$ the parameter along the characteristic curve and $y=y(x)$ the initial point of the curve. By the equation \eqref{eik-moc} we get
\[|\dot{\bm X}|=2|\vb A\bm P|\le 2\Lambda^{1/2}\sqrt{a^{ij}P_iP_j}=2\Lambda^{1/2}, \quad\forall s,\]
since $\Lambda$ bounds the largest eigenvalue of $\vb A$.
W.l.o.g. letting $s_0>0$, we integrate along the characteristic curve from $y$ to $x$ to get 
\[\dist(x,\pt D) \le |x-y| \le \int_0^{s_0} |\dot{\bm X}|\d s \le 2\Lambda^{1/2} s.\]
Moreover, by \eqref{U-eq-2s} we have $h^a(x)=U(s)=2s$, and the LHS inequality follows immediately.
\end{proof}

Based on the matrix function $\vb A_k$, we construct the generalized SDF $h^{a_k}(x)$ and the phase transition sequence $\phi^k_\ve(x)$ with the same method as in \eqref{gsdf} and \eqref{phi-eps-ext}, respectively.
Note that the function $\chi_\ve$ and the number $\eta_\ve$, as stated in Lemma \ref{chi-prop}, are independent of $k$. 
We find the following fact important.
\begin{lemma} \label{phikeps-uniform}
For all $x\notin \pt D$, $\phi^k_\ve(x) \to \phi_0(x)$ as $\ve\to 0$, and the convergence is \emph{uniform in $k$.}
\end{lemma}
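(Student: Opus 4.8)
The plan is to reproduce the almost-everywhere convergence argument already established in Lemma~\ref{ext-phi}(c), but to keep careful track of the constants so that the threshold on $\ve$ below which $\phi^k_\ve(x)=\phi_0(x)$ comes out \emph{independent of $k$}. The entire point is that every object controlling the transition region is $k$-independent: the profile $\chi_\ve$ and the width $\eta_\ve$ of Lemma~\ref{chi-prop} do not depend on $k$, and although the volume-correcting shift (call it $\delta_\ve^k$) and the characteristic neighbourhood $V^k$ associated with $\vb A_k$ do depend on $k$, the shift is always confined to $0\le\delta_\ve^k\le\eta_\ve$. On the other side, the generalized signed distance functions $h^{a_k}$ all obey the two-sided estimate of Lemma~\ref{gsdf-ge-dist} with the \emph{same} ellipticity constants $\lambda,\Lambda$, because the smooth approximations $\{\vb A_k\}$ were chosen to satisfy \eqref{uni-ell} with common constants. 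This shared lower bound on $|h^{a_k}|$ is what upgrades pointwise convergence to uniformity in $k$.

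Concretely, I would fix an arbitrary $x\notin\partial D$ and set $d=\dist(x,\partial D)>0$. Applying Lemma~\ref{gsdf-ge-dist} to each $\vb A_k$ gives $|h^{a_k}(x)|\ge \Lambda^{-1/2}d$ for every $k$, with $\Lambda$ independent of $k$. Since $\eta_\ve\to 0$ by Lemma~\ref{chi-prop}(c) and $0\le\delta_\ve^k\le\eta_\ve$, I can then pick a threshold $\ve_0=\ve_0(d,\Lambda)$, not depending on $k$, such that $\eta_\ve<\Lambda^{-1/2}d$ whenever $\ve<\ve_0$. I would then split according to the sign of $h^{a_k}$ via Lemma~\ref{eik-moc-prop}. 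If $x\in D$, so $\phi_0(x)=\alpha$, then either $x\notin V^k$ and the extension \eqref{phi-eps-ext} gives $\phi^k_\ve(x)=\alpha$ outright, or $x\in V^k$ and $h^{a_k}(x)\le -\Lambda^{-1/2}d<-\delta_\ve^k$ for $\ve<\ve_0$, so \eqref{phi-eps} again yields $\phi^k_\ve(x)=\alpha$. The case $x\in(\overline D)^c$ is symmetric: there $\phi_0(x)=\beta$, and for $\ve<\ve_0$ one has $h^{a_k}(x)\ge \Lambda^{-1/2}d>\eta_\ve\ge\eta_\ve-\delta_\ve^k$, forcing $\phi^k_\ve(x)=\beta$. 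In every case $\phi^k_\ve(x)=\phi_0(x)$ for all $\ve<\ve_0$ and all $k$ simultaneously, which is exactly the asserted uniformity in $k$.

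I do not expect a genuine obstacle; the only point demanding care is bookkeeping of the $k$-dependent objects. The argument must be phrased so that $V^k$ and $\delta_\ve^k$ are \emph{only ever bounded} by the $k$-independent quantities $\eta_\ve$ (for the shift) and $\Lambda$ (through the distance estimate), never estimated in a way that secretly reintroduces $k$. Since Lemma~\ref{gsdf-ge-dist} supplies the lower bound $|h^{a_k}(x)|\ge\Lambda^{-1/2}d$ with a constant shared by all $k$, the threshold $\ve_0$ depends only on $d$ and $\Lambda$, and the uniformity is immediate.
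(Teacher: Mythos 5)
Your proposal is correct and follows essentially the same route as the paper: both use the shared ellipticity constants to invoke Lemma~\ref{gsdf-ge-dist} uniformly in $k$, obtaining $|h^{a_k}(x)|\ge\Lambda^{-1/2}\dist(x,\pt D)$, and then use the $k$-independence of $\eta_\ve$ (and the bound $0\le\delta_\ve^k\le\eta_\ve$) to get a $k$-independent threshold on $\ve$ below which $\phi^k_\ve(x)=\phi_0(x)$. Your explicit case analysis merely spells out what the paper compresses into ``the same argument in Lemma~\ref{ext-phi}(c) applies.''
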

\begin{proof}
Since the uniform elliptic constants $\lambda,\Lambda$ in \eqref{uni-ell} are the same for all $\{\vb A_k\}$ by assumption, the conclusion of Lemma \ref{gsdf-ge-dist} holds. Thus, $x\notin \pt D$, i.e. $\dist (x,\pt D)>0$ implies that
\[|h^{a_k}(x)|\ge \Lambda^{-1/2}\dist(x,\pt D)>0.\]
Then, it holds for all $k$ that $|h^{a_k}(x)|>\eta_\ve$ whenever $\eta_\ve< \Lambda^{-1/2}\dist(x,\pt D)$.
Therefore, the same argument in Lemma \ref{ext-phi}(c) applies, leading to $\phi^k_\ve(x)=\phi_0(x)$ for all $k$ and $\ve$ satisfying $\eta_\ve<\Lambda^{-1/2}\dist(x,\pt D)$. Hence the convergence $\phi^k_\ve(x)\to \phi_0(x)$ is uniform over $k$.
\end{proof}

Denote the generalized Van der Waals-Cahn-Hilliard functional associated with $\vb A_k$ by $L^{a_k}_\ve(\phi)$. For each $k>1$, we iteratively choose $\ve_k<\min\{k^{-1},\ve_{k-1}\}$ such that
\[\sup_{\ve_k\le \ve<\ve_{k-1}} L^{a_k}_{\ve}[\phi^k_\ve] < \int_{\pt D} |a_k(x,\bm\nu)|^{\frac12}\d S(x)+\frac1k,\]
which is possible because Lemma \ref{gamma-limsup} applies to $\vb A_k$.
We construct the sequence $\{\phi_\ve\}$ by defining
\[\phi_\ve = \phi^k_\ve, \quad \ve_{k}\le \ve<\ve_{k-1}.\]
Then by Lemma \ref{phikeps-uniform}, the sequence $\{\phi_\ve\}$ converges to $\phi_0$ for all $x\notin\pt D$. It satisfies the volume constraint \eqref{vol-con-1} since $\{\phi^k_\ve\}$ satisfies the same constraint. Moreover, we compute that
\begin{align*}
    \limsup_{\ve\to 0} L^a_\ve(\phi_\ve)
&\le \limsup_{k\to\infty} \sup_{\ve_k\le \ve<\ve_{k-1}} L^{a_k}_{\ve}[\phi^k_\ve] \\
&\le \limsup_{k\to\infty} \left[ \int_{\pt D} |a_k(x,\bm\nu)|^{\frac12}\d S(x)+\frac1k \right] \\
&=\int_{\pt D} |a(x,\bm\nu)|^{\frac12}\d S(x),
\end{align*}
as desired. In the computation above, the first inequality is based on the assumption that $\vb A\preceq \vb A_k$.
\end{proof}

We proceed to proving Claim (IV).

\begin{proof}[Proof of Claim (IV)]
Let $\phi_\ve$ be the sequence guaranteed by Lemma \ref{gamma-limsup}, with $a(x,p)=a_{\vb Q(x)}(p)$ and $\alpha=1,\beta=0$, which satisfies $\phi_\ve\to I_D$ a.e., $0\le\phi\le 1$ and the volume constraint \eqref{vol-con}.
Then, we apply the lemma to get
\begin{align*}
    \text{LHS of \eqref{gamlim-3-4-sup}}
    &=\limsup_{\ve\to 0} \int_\Omega \qty[ \ve a_{\vb Q}(\nabla\phi_\ve) + \ve^{-1} W(\phi_\ve)]\d x \\
    &\le 2 c_0 \int_{\pt D} |a_{\vb Q}(\bm\nu)|^{\frac12}\d S = \text{RHS of \eqref{gamlim-3-4-sup}}
\end{align*}
as desired, since $c_0=\frac16$.
\end{proof}

Having established the claims, we can prove the lower and upper limits in Proposition \ref{ldgphf-gam-lim}.
\begin{proof}[Proof of Proposition \ref{ldgphf-gam-lim}]
\textbf{i. Lower bound proof.} For the lower bound \eqref{ldgphf-liminf}, we add \eqref{gamlim-1-2-inf} and \eqref{gamlim-3-4-inf} for all sequences $\phi_\ve\to I_D$ in $L^1$ and $\vb Q_\ve\rightharpoonup \vb Q$ in $H_0^1$. As $H_0^1$ is compactly embedded into $L^1$, $\vb Q_\ve\to \vb Q$ in $L^1$ as well.
Using Claims (I) and (III), we perform the following computations to obtain \eqref{ldgphf-liminf}.
\begin{align*}
    \liminf_{\ve\to 0} E_\ve[\vb Q_\ve,\phi_\ve]
    &\ge \liminf_{\ve\to 0} \qty( E^{\rm LdG}[\vb Q_\ve] +\omega_v\lambda^2  E^{\rm void}[\vb Q_\ve,\phi_\ve]) \\
    &\quad + \omega_p\lambda \liminf_{\ve\to 0} \int_\Omega \qty[\ve a_{\vb Q_\ve}(\nabla\phi_\ve) + \ve^{-1}W(\phi_\ve)] \d x \\
    &\ge E^{\rm LdG}[\vb Q]
    +\omega_v\lambda^2 E^{\rm void}[\vb Q,I_D]
    +\frac{\omega_p\lambda}{3} \int_{\pt D} |a_{\vb Q}(\bm\nu)|^{\frac12}\d S(x) \\
    &=E_0[\vb Q,D].
\end{align*}

\textbf{ii. Upper bound proof.} Take the sequence guaranteed by Claim (IV), where $\phi_\ve\to I_D$ a.e. with $0\le\phi_\ve\le 1$ (so $\{\phi_\ve\}$ also satisfies Claim (II)) and the volume constraint \eqref{vol-con} holds. 
Therefore, \eqref{gamlim-1-2-sup} and \eqref{gamlim-3-4-sup} both hold for the sequence $\{\phi_\ve\}$. Adding the two inequalities results in  \eqref{ldgphf-limsup}.
\begin{align*}
    \limsup_{\ve\to 0} E_\ve[\vb Q,\phi_\ve]
    &= \lim_{\ve\to 0} \qty( E^{\rm LdG}[\vb Q] + \omega_v\lambda^2 E^{\rm void}[\vb Q,\phi_\ve]) \\
    &\quad + \omega_p\lambda\limsup_{\ve\to 0} \int_\Omega \qty[\ve a_{\vb Q}(\nabla\phi_\ve) + \ve^{-1}W(\phi_\ve)] \d x \\
    &\le E^{\rm LdG}[\vb Q]
    +\omega_v\lambda^2 E^{\rm void}[\vb Q,I_D]
    +\frac{\omega_p\lambda}{3} \int_{\pt D} |a_{\vb Q}(\bm\nu)|^{\frac12}\d S(x) \\
    &=E_0[\vb Q,D].
\end{align*}
We have proven the assertions of Proposition \ref{ldgphf-gam-lim} as desired.
\end{proof}

Finally, the proof of Theorem \ref{sharp-intf-limit} follows from Proposition \ref{ldgphf-gam-lim} by a standard argument.

\begin{proof}[Proof of Theorem \ref{sharp-intf-limit}]
For any $(\vb Q,D)$ satisfying $\vb Q\in C^0\cap H_0^1$ and $D\subset\subset\Omega$, take the sequence $\{\phi_\ve\}$ guaranteed by Proposition \ref{ldgphf-gam-lim}(b) to get
\begin{align*} 
    E_0[\vb Q,D] &\ge \limsup_{\ve\to 0} E_\ve[\vb Q,\phi_\ve] \\
    &\ge \liminf_{\ve\to 0} E_\ve[\vb Q_\ve^*, \phi_\ve^*]\ge E_0[\vb Q_0,D_0].
\end{align*}
The second inequality comes from the assumption that $(\vb Q_\ve^*,\phi_\ve^*)$ is a minimizer of $E_\ve$, and the third inequality utilizes Proposition \ref{ldgphf-gam-lim}(a) at $(\vb Q_0,D_0)$.
Hence, by definition $(\vb Q_0,D_0)$ minimizes $E_0$ amongst all $(\vb Q,D)$ of interest. 
\end{proof}

\section{Numerical results} \label{sec-num}
In this section, we study the numerical behaviour of the diffuse-interface LdG model.

To simplify implementation and demonstration, and to facilitate further comparison with the numerical results in literature, we use the 2D reduced LdG model on a re-scaled 2D domain, $\tilde\Omega$. 
The reduced LdG model is the thin film limit of the full LdG model on a 3D domain $\Omega = \tilde\Omega \times [0,h]$, in the $h\to 0$ limit, under certain boundary conditions \cite{canevari_well_2020}. 
Using a special reduced temperature $A=-\frac{B^2}{3C}$, the $\vb Q$-critical point of the LdG free energy \eqref{E-ldg} can be related to a reduced $2\times 2$ symmetric traceless tensor $\vb P$, as follows \cite{han_reduced_2020}
\begin{equation} \label{Q-and-P}
    \vb{Q}\bumpeq\begin{bmatrix}
        \vb{P}+\frac{s_+}{6}\vb{I}_2 & 0 \\ 0 & -\frac{s_+}{3}
    \end{bmatrix}.
\end{equation} This reduced matrix, $\vb P$, is the reduced LdG order parameter.
Using the relation \eqref{Q-and-P}, the corresponding reduced energy is obtained by means of shifting $\bar{E}_{\bar\ve}$ \eqref{ldg-phf-nondim} by an additive constant.
\begin{equation} \label{ldg-phf-nondim-2d}
\begin{aligned}
    \widetilde{E}_{\bar\ve} [\vb{P},\phi]
    &=\int_{\tilde\Omega} \qty[ \frac{1}{2} |\grad\vb{P}|^2 + \bar\lambda^2 \qty(-\frac{B^2}{4C^2} \tr\vb{P}^2 + \frac{1}{4} (\tr\vb{P}^2)^2)] \d x \\
    &\quad+\bar\omega_p\bar\lambda \int_{\tilde\Omega} \qty[ \bar\ve |\nabla\phi|^2+\bar\ve^{-1} \phi^2(1-\phi)^2]\d x 
    +\bar\omega_a \bar\lambda \int_{\tilde\Omega} \bar\ve\left| (\vb{P}+s_+/2)\nabla\phi \right|^2 \d x \\
    &\quad +\bar\omega_v\bar\lambda^2 \int_{\tilde\Omega} \frac12(1-\phi)^2 |\vb P|^2 \d x, \\
    \text{s.t.}\quad & \int_{\tilde\Omega} \phi\d x = \bar V_0.
\end{aligned}
\end{equation}
Recall that $\bar\ve=\ve/\lambda$ is the relative capillary width, and the re-scaled constants are defined by
\[\bar{\lambda}^2 \triangleq \frac{\lambda^2 C}{L},\ 
\bar{\omega}_p = \frac{\omega_p}{\sqrt{CL}},\ 
\bar{\omega}_v = \frac{\omega_v}{C},\ 
\bar{\omega}_a = \frac{\omega_a}{\sqrt{CL}},\]
so the coefficients of the trailing integrals in \eqref{ldg-phf-nondim-2d} are respectively $\frac{\omega_p\lambda}{L},\frac{\omega_a\lambda}{L},\frac{\omega_v\lambda^2}{L}$.
We take $B = \unit[0.64\times 10^4]{N\cdot m^{-2}}$, $C = \unit[0.35 \times 10^4]{N\cdot m^{-2}}$, and $L = \unit[4\times 10^{-11}]{N}$ in the following numerical simulations, which are the physical constants of the commonly used nematic liquid crystal material MBBA \cite{majumdar_equilibrium_2010}. We also fix the following hyperparameters
\[\bar\ve=\frac{\ve}{\lambda}=0.005, \ \bar V_0=0.09, \
\frac{\omega_p}{L}=\unit[3\times 10^7]{m^{-1}},\ \frac{\omega_v}{L}=\unit[6\times 10^{14}]{m^{-2}}.
\]

Using the finite difference method, we discretize the 2D domain $\tilde\Omega=[0,1]\times [0,1]$ with an $N\times N$ grid where $N=128$, and minimize $E_\ve$ numerically following the gradient flow with respect with $\vb Q$ and $\phi$. We study the minimizers of the diffuse-interface LdG energy for various domain sizes, $\lambda$, and anchoring strengths, $\omega_a$, in Figure \ref{t2r}. We plot $\lambda$ on the vertical axis and $\frac{\omega_a}{L}$ on the horizontal axis, and this quantity also has units of length.
For small $\lambda$ and small $\omega_a$, the diffuse-interface energy minimizer is the radial state with circular N-I interface and a $+1$ defect at the centre of the nematic phase. As $\lambda$ increases, the ordering increases inside the nematic droplet and the N-I interface becomes more pronounced. The interface shape deforms from circular to ellipsoidal, as $\lambda$ increases, for fixed $\omega_a$.
As $\omega_a$ increases, for a fixed $\lambda$, the minimizer becomes a polar state with an ellipsoidal interface, and we also see two $+1/2$ defects along the long axis.
As $\omega_a$ increases and for stronger tangential anchoring, the defects are expelled from the N-I interface slightly and move towards the interior of the tactoid.
For large $\lambda$ and large $\omega_a$, the minimizer is a tactoidal state for which the nematic directors are almost parallel to each other, with tangential anchoring on the N-I interface and two $+1/2$-interior defects close to the spherical caps of the tactoid.

We briefly compare our numerical results with existing work. The radial and polar states on a 2D disc with tangential boundary condition have been studied in \cite{hu_disclination_2016}. The radial state is radially symmetric with a central $+1$ defect, and the polar state has two $+1/2$ defects along the diameter.
People have also numerically found the tactoid, which is a spindle-shaped nematic droplet within which directors are almost parallel to each other, and with either defects near the pointy ends of the spindle or none at all \cite{adler_nonlinear_2023,debenedictis_shape_2016}. These numerical observations in published work are consistent with the phase diagram in Figure \ref{t2r}, for small $\lambda$ and $\omega_a$ (radial state), tactoids with tangential anchoring and with no interior defects (large $\lambda$ and small to moderate $\omega_a$), tactoids with tangential anchoring and interior defects (large $\lambda$ and moderate to large $\omega_a$).
\begin{figure}[th]
    \centering
    \begin{tikzpicture}[x=0.1\textwidth,y=0.1\textwidth]
        \node at (0,0)[anchor=south west]{\includegraphics[width=0.6\textwidth]{./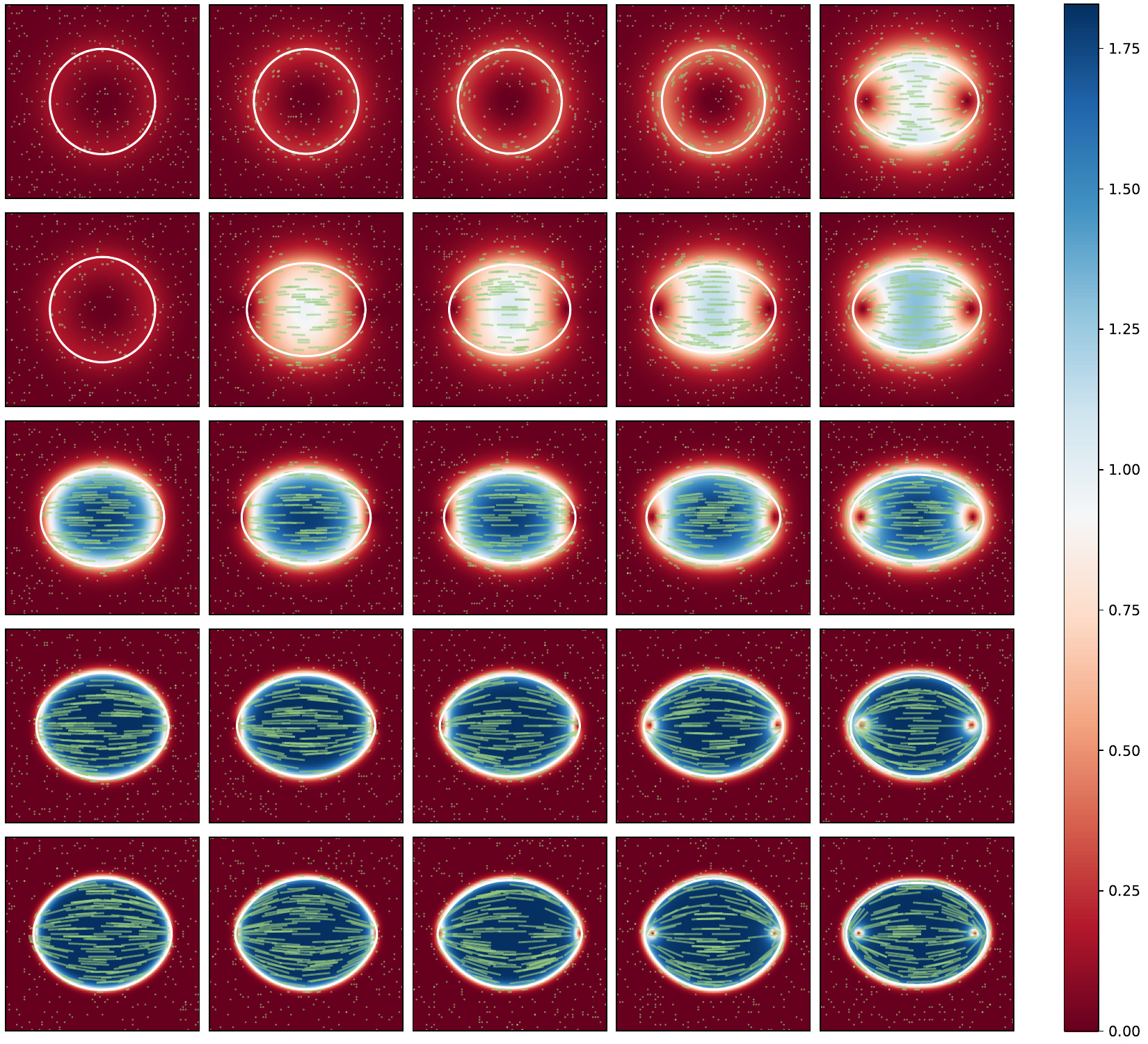}};
        \draw[->,very thick] (1,0)--(5,0);
        \node at (3,0)[anchor=north]{$\omega_a/L$};
        \draw[->,very thick] (0,5)--(0,1);
        \node at (-0.2,3.05)[rotate=-90]{$\lambda$};
    \end{tikzpicture}
    \caption{The plots of the minimizers of \eqref{ldg-phf-nondim-2d}, for various values of $\lambda$ in rows from top to bottom: $0.8,1,2,5,7.5$ (unit: $\unit[10^{-6}]{m}$); values of $\omega_a/L$ in columns from left to right: $1,3,6,9,15,30$ (unit: $10^7 m^{-1}$), with $\bar\ve=0.005$, $\bar V_0=0.09$, $\omega_p/L=\unit[3\times 10^7]{m^{-1}}$, $\omega_v/L=\unit[6\times 10^{14}]{m^{-2}}$.
    The colour bar represents the order parameter $|\vb P|^2$. The white contour along N-I interface denotes the level set $\{\phi=0.5\}$. For the green line, the length represents the positive eigenvalue of $\vb P$ and the direction represents the eigenvector of $\vb P$ corresponding to the positive eigenvalue.}
    \label{t2r}
\end{figure}

\section{Conclusion and discussions} \label{sec:conclusion}
In this paper, we propose a diffuse-interface LdG energy functional, $E_\ve$ \eqref{ldg-phf} for the free boundary between a nematic droplet and an isotropic surrounding. The model incorporates a LdG $\vb Q$-tensor and a phase field $\phi$ as order parameters, with an extra hyperparameter $\ve$ introduced to describe the width of interface.

We establish the existence (Proposition \ref{min-exist}) of global minimizers of $E_\ve$.
For fixed $\phi$, we have also shown that the $\vb Q$-critical points satisfy the maximum principle (Theorem \ref{Q-uni-bound}) and uniqueness (Proposition \ref{min-unique}) under certain conditions.
The existence, uniqueness and regularity properties, as a whole, ensure the solvability of mathematical models centred around the diffuse-interface LdG functional, $E_\ve$.
We also prove the sharp-interface limit of $E_\ve$ when $\ve\to 0$, in the sense of $\Gamma$-convergence (Theorem \ref{sharp-intf-limit} and Proposition \ref{ldgphf-gam-lim}), i.e. convergence of minimizers of $E_\ve$ to minimizers of $E_0$ as $\ve \to 0$.
The sharp-interface limit of the diffuse-interface LdG energy is defined in terms of a nematic region $D$ and the $\vb Q$-tensor order parameter, with contributions from the LdG energy, a void energy in the complement and a boundary energy.
The theoretical work is complemented by numerical tests that yield physically interpretable results, which are also consistent with existing observations in other free-boundary NLC models. In particular, we observe the transition form radial to polar and tactoidal states when adjusting the scale $\lambda$, under different hyperparameters.

There are multiple generalisations of our work.
The diffuse-interface framework is a powerful tool in describing deformable regions, and can be used to model other LC phases such as smectic phases with free boundaries \cite{shi_modified_2024,xia_variational_2023}.
For example, there is room for improvement for our theoretical results such as the maximum principle and uniqueness, which is only proven in the fixed-$\phi$ case under smoothness assumptions. It could be generalized to global minimizers under weaker conditions.
We have only studied energy minimizers in Figure \ref{t2r}. 
There may be multiple energy minimizers and future work includes the study of solution landscapes of \eqref{ldg-phf-nondim} i.e., the study of stable and unstable saddle points of \eqref{ldg-phf-nondim} and the pathways between them, using the method of saddle dynamics \cite{han_multistability_2023,shi_hierarchies_2023,yin_construction_2020}. 
The mesh width of the finite difference method has also limited our investigation into the sharp-interface limit, so more advanced numerical schemes, such as the spectral method, could be implemented to study the rate of convergence of the diffuse-energy LdG energy to its sharp interface limit $\ve \to 0$. 
To summarise, our work is a firm stepping stone into the hugely complex and fascinating field of NLC shape morphologies with free boundaries, and will contribute to mathematical toolboxes for studying optimal shapes for prescribed conditions, and the inverse problem of finding optimal conditions for a prescribed or desired NLC morphology. 

\section*{Acknowledgements}

Dawei Wu and Baoming Shi appreciate the University of Strathclyde for their support and hospitality when work on this paper was undertaken.

\appendix
\numberwithin{theorem}{section}
\section{Calculation of Euler-Lagrange equation} \label{sec-el}

From this section onwards, we return to dropping bars in the nondimensional energy \eqref{ldg-phf-nondim} except the bar over $\bar F_b$.
Consider a perturbation $\delta\vb Q$ to the function $\vb Q\in H_0^1(\Omega;\mathcal{S}_0),$ and the variational derivative $\frac{\delta E_\ve}{\delta \vb Q}$ is an $\mathcal{S}_0$-valued function that satisfies 
\[\delta E_{\ve} = \int_\Omega \frac{\delta E_\ve}{\delta \vb Q} :\delta\vb Q\d x + o(\|\delta\vb Q\|_{L^2}),\]
where $\vb A:\vb B$ is the matrix inner product.
Taking the variational derivative of $E_\ve$ with respect to $\vb Q$ results in the following Euler-Lagrange equation.
\[\frac{\delta E^{\rm LdG}}{\delta\vb Q}
    +\omega_v\lambda^2 \frac{\delta E^{\rm void}}{\delta\vb Q}
    +\omega_a\lambda \frac{\delta E_\ve^{\rm anch}}{\delta\vb Q}=0.\]
The first variation of the mixing energy $E_{\ve}^{mix}$ with respect to $\vb Q$ vanishes.

The variational derivative of the LdG energy $E^{LdG}$ has been computed in \cite[Eq. (3.24)]{majumdar_equilibrium_2010} as follows (notation adapted to our form).
\begin{equation} \label{del-ldg-del-Q}
    \frac{\delta E^{\rm LdG}}{\delta\vb Q}=-\triangle\vb Q + \lambda^2 \nabla_{\mathcal{S}_0} \bar F_b(\vb Q),
\end{equation}
where $\nabla_{\mathcal{S}_0}$ is the projected gradient operator on $\mathcal{S}_0$, which maps the matrix gradient with the projection operator $\mathscr{P}_0$ in \eqref{trace0-proj}.
The projection is necessary because of the traceless constraint on $\vb Q$.
Hence, we have
\[\nabla_{\mathcal{S}_0} \bar F_b(\vb Q)=\mathscr{P}_0 \qty[\qty(\frac{A}{C}+\tr\vb Q^2)\vb Q-\frac{B}{C}\vb Q^2]
=\qty(\frac{A}{C}+\tr\vb Q^2)\vb Q-\frac{B}{C}\qty(\vb Q^2-\frac{\tr \vb Q^2}{3}\vb I ).\]

The variational derivative of the void penalty $E^{\rm void}$ is
\begin{equation}
\frac{\delta E^{\rm void}}{\delta\vb Q}=(1-\phi)^2 \nabla_{\mathcal{S}_0}\qty(\frac12 |\vb Q|^2) = (1-\phi)^2 \vb Q.
\end{equation}

The variational derivative of the anchoring penalty $E^{\rm anch}_{\ve}$ is
\begin{equation}
    \begin{aligned}
        \frac{\delta E_\ve^{\rm anch}}{\delta\vb Q}&=\ve \nabla_{\mathcal{S}_0} \qty[ \left| \qty(\vb Q+\frac{s_+}{3}\vb I)\nabla\phi \right|^2] \\
        &= \ve \mathscr{P}_0\qty[(\nabla\phi\otimes\nabla\phi)\qty(\vb Q+\frac{s_+}{3}\vb I) + \qty(\vb Q+\frac{s_+}{3}\vb I)(\nabla\phi\otimes\nabla\phi)].
    \end{aligned}
\end{equation}
During the computation of the above equation, we have used the relation
\[\left|\qty(\vb Q+\frac{s_+}{3}\vb I)\nabla\phi\right|^2=(\nabla\phi\otimes\nabla\phi): \left( \vb Q+\frac{s_+}{3}\vb I \right)^2,\]
and the fact that if $f(\vb Q)=\vb A:\vb Q^k$ where $k\in\mathbb{N}$, then the matrix gradient of $f$ is
\[\nabla f(\vb Q)=\sum_{j=0}^{k-1}\vb Q^j\vb A \vb Q^{k-1-j}.\]
Adding the expressions above gives \eqref{ldgphf-el-Q}.

Taking the first variation of $E_\ve$ with respect to $\phi$, we get
\[\omega_p\lambda\frac{\delta E_\ve^{\rm mix}}{\delta\phi}+\omega_a\lambda\frac{\delta E_\ve^{\rm anch}}{\delta\phi}+\omega_v\lambda^2\frac{\delta E^{\rm void}}{\delta\phi}+2\omega_p\lambda\ve\xi=0,\]
where $\xi\in\BbbR$ is a Lagrangian multiplier from the volume constraint \eqref{vol-con}.
The first variation of LdG energy $E^{\rm LdG}$ with respect to $\phi$ vanishes.
We can then directly compute that
\begin{align}
    \frac{\delta E_\ve^{\rm mix}}{\delta\phi}
    &=-2\ve \triangle\phi + 2\ve^{-1}\phi(\phi-1)(2\phi-1), \\
    \frac{\delta E_\ve^{\rm anch}}{\delta\phi}
    &=-2\ve \divergence\qty[\qty(\vb Q+\frac{s_+}{3}\vb I)^2 \nabla\phi], \\
    \frac{\delta E^{\rm void}}{\delta\phi}&= (\phi-1) |\vb Q|^2.
\end{align}
Adding the expressions above and dividing the entire equation by $2\omega_p\lambda\ve$ gives us the equation \eqref{ldgphf-el-phi}.

\bibliographystyle{siam}
\bibliography{main.bib}
\end{document}